\documentclass[reqno,12pt]{amsart} 

\usepackage{amsmath,amssymb,amsthm,enumerate,mathrsfs,colonequals,bm}
\usepackage{fullpage,url,tikz,xspace,setspace, mleftright}
\usepackage{microtype}
\usepackage{calc}
\usepackage{graphicx}
\usepackage[
unicode=true
]{hyperref}
\usepackage[alphabetic,lite,nobysame]{amsrefs} 

\usepackage{color,tikz}
\usetikzlibrary{decorations.markings}
\newcommand{\bruce}[1]{\textsf{\color{blue} $\spadesuit$ Bruce: #1}}


\setcounter{tocdepth}{3}
\let\oldtocsection=\tocsection
\let\oldtocsubsection=\tocsubsection
\let\oldtocsubsubsection=\tocsubsubsection
\renewcommand{\tocsection}[2]{\hspace{0em}\oldtocsection{#1}{#2}}
\renewcommand{\tocsubsection}[2]{\hspace{1em}\oldtocsubsection{#1}{#2}}
\renewcommand{\tocsubsubsection}[2]{\hspace{2em}\oldtocsubsubsection{#1}{#2}}

\newcommand{\Bmu}{\mbox{$\raisebox{-0.59ex}
  {$l$}\hspace{-0.18em}\mu\hspace{-0.88em}\raisebox{-0.98ex}{\scalebox{2}
  {$\color{white}.$}}\hspace{-0.416em}\raisebox{+0.88ex}
  {$\color{white}.$}\hspace{0.46em}$}{}}

\newtheorem{theorem}{Theorem}
\newtheorem{proposition}[theorem]{Proposition}
\newtheorem{lemma}[theorem]{Lemma}
\newtheorem{corollary}[theorem]{Corollary}
\newtheorem{definition}[theorem]{Definition}
\newtheorem{remark1}[theorem]{Remark}

\newcommand{\Z}{\mathbb{Z}}
\newcommand{\Q}{\mathbb{Q}}

\newcommand{\HH}{\mathbb{H}}
\newcommand{\NN}{\mathbb{N}}
\newcommand{\F}{\mathbb{F}}

\newcommand{\bFp}{\overline{\F}_p}
\newcommand{\R}{\mathbb{R}}
\newcommand{\RP}{\mathbb{RP}}
\renewcommand{\H}{\mathbb{H}}

\newcommand{\A}{\mathscr{A}}
\newcommand{\cA}{\mathcal{A}}
\newcommand{\X}{\mathscr{X}}

\newcommand{\bs}{\backslash}

\newcommand{\cO}{\mathcal{O}}
\newcommand{\OO}{\cO}
\newcommand{\cB}{\mathcal{B}}

\newcommand{\cK}{K}

\newcommand{\bfr}{\mathbf{r}}
\newcommand{\obr}{\overline{\bfr}}
\newcommand{\hbr}{\hat{\bfr}}

\newcommand{\gr}{\mathit{gr}}

\newcommand{\Disc}{\rm Disc}
\newcommand{\co}{\mathit{co}}

\newcommand{\Gr}{\mathit{Gr}}
\newcommand{\wgr}{\widetilde{\mathit{gr}}}
\newcommand{\wco}{\widetilde{\mathit{co}}}

\newcommand{\brc}{\mathit{brc}}
\newcommand{\wbrc}{\widetilde{\mathit{brc}}}
\newcommand{\sS}{{\sf SP}}
\newcommand{\Sk}{{\sf Sk}}
\newcommand{\sSgpz}{\sS_{\!g}(p)_0}

\newcommand{\osSgpl}{\overline{\sS}_{\!g}(\ell,p)}

\newcommand{\sSgpr}{\sS_{\!g}(\ell, p)_r}
\newcommand{\sSgps}{\sS_{\!g}(\ell, p)_s}
\newcommand{\sSgpgr}{\sS_{\!g}(\ell, p)_{\hat{r}}}

\newcommand{\sSgpl}{\sS_{\!g}(\ell, p)}

\newcommand{\zm}{\begin{bmatrix}\cdot 0\cdot\end{bmatrix}}

\DeclareMathOperator\End{End}
\DeclareMathOperator\SL{SL}
\DeclareMathOperator\SLT{\SL_{2}}
\DeclareMathOperator\Id{Id}
\DeclareMathOperator\Sp{Sp}
\DeclareMathOperator\GSp{GSp}
\DeclareMathOperator\PGSp{PGSp}
\DeclareMathOperator\rU{U}
\DeclareMathOperator\GU{GU}

\newcommand\Ram{{\sf Ram\,}}

\newcommand\Et{{\sf Et\,}}
\DeclareMathOperator\Ta{Ta}
\DeclareMathOperator\Ver{Ver}
\DeclareMathOperator\Ed{Ed}
\DeclareMathOperator\Pic{Pic}

\DeclareMathOperator\Iso{Iso}
\DeclareMathOperator\iso{iso}
\DeclareMathOperator\Mat{Mat}
\DeclareMathOperator\Adj{Adj}
\DeclareMathOperator\Aut{Aut}
\DeclareMathOperator\Ad{Ad}
\DeclareMathOperator\Adw{\Ad_w}

\DeclareMathOperator\charr{char}
\DeclareMathOperator\disc{disc}
\DeclareMathOperator\w{w}
\DeclareMathOperator\Grr{Gr}

\DeclareMathOperator\rdeg{rdeg}

\DeclareMathOperator\Orb{Orb}
\DeclareMathOperator\ty{\mathit t}
\DeclareMathOperator\Fa{Fa}
\DeclareMathOperator\Adjj{Adj}

\begin{document}
\title{Isogeny complexes of
superspecial abelian varieties}
\subjclass[2010]{Primary 14K02; Secondary 11G10, 14G15}
\keywords{superspecial, abelian varieties, isogeny graphs, Brandt matrices,
quaternionic unitary group}

\author{Bruce~W.~Jordan}
\address{Department of Mathematics, Baruch College, The City University
of New York, One Bernard Baruch Way, New York, NY 10010-5526, USA}
\email{bruce.jordan@baruch.cuny.edu}

\author{Yevgeny~Zaytman}
\address{Independent Mathematician,
Newton, MA 02465, USA}
\email{gzaytman@alum.mit.edu}

\begin{abstract}
We consider the structures formed by isogenies of 
abelian varieties with polarizations that are not necessarily principal, 
specifically with the
$[\ell]$-polarizations we have previously defined.  Our primary interest is
in superspecial abelian varieties, where the isogenies are related 
to quaternionic hermitian forms.  We first consider isogeny graphs.
We show that these $[\ell]$-isogeny graphs are a generalized
Brandt graph and construct
them entirely in terms of definite quaternion algebras.  We prove 
that they are connected and give examples to 
show that the regular graphs obtained are sometimes Ramanujan and 
sometimes not. Isogenies of $[\ell]$-polarized abelian varieties can be closed
under composition, with the consequence that such isogenies naturally
form semi-simplicial complexes as introduced by Eilenberg and Zilber in 1950
(later also called 
$\Delta$-complexes)---the higher-dimensional analogues of multigraphs.
  We show that these isogeny 
complexes can be constructed from the arithmetic of hermitian forms over 
definite quaternion algebras and that they are quotients of the Bruhat-Tits
building of the symplectic group by the action of a quaternionic unitary group.
Working with quaternions these isogeny graphs and complexes are amenable to machine computation and we include many examples, concluding with a detailed examination of the $[2]$-isogeny complexes of superspecial abelian surfaces in characteristic $7$.

\end{abstract}

\vspace*{.3in}

\maketitle

\tableofcontents

\section{Introduction}

Fix primes $p$ and $\ell$ with $\ell\neq p$.
Isogenies of degree $\ell$ (or $\ell$-isogenies) of supersingular elliptic curves in characteristic
$p$  are 
naturally organized
into isogeny graphs, which can in turn be described
using definite rational quaternion algebras.  The only subtlety
is that there are variations depending on how polarizations and 
isogenies are identified, leading to {\em three} different 
isogeny graphs in Jordan-Zaytman \cite{jz}: the 
big isogeny graph $\Gr_{1}(\ell,p)$, the little isogeny graph $\gr_{1}(\ell,p)$, and the 
enhanced isogeny graph $\wgr_{1}(\ell,p)$.  The literature seems 
to have only one 
isogeny graph; this ubiquitous graph is the big isogeny graph for us. 

Distinguishing between these three 
makes many results clearer and more precise.  For example,
the little and enhanced isogeny graphs are uniformized by the 
Bruhat-Tits building $\Delta=\Delta_{\ell}$ of $\SLT(\Q_{\ell})$, whereas
the big isogeny graph is not \cite[Sect.~9.1]{jz}.  
The big isogeny graph is a regular graph,
so it is natural to ask if it is Ramanujan, whereas the little and 
enhanced isogeny graphs are not.  And it is the little and enhanced isogeny
graphs which arise from the bad reduction of Shimura curves and not
the familiar big isogeny graph \cite[Sect.~9.2]{jz}.

Fix a supersingular elliptic curve $E$ over $\bFp$.  Then $\OO=\OO_E=\End(E)$
is a  maximal order in the definite rational quaternion algebra $\HH_p$ ramified at $p$. A 
superspecial abelian variety $A$ over $\bFp$ of dimension $g$ 
is isomorphic to $E^g$; this is equivalent to requiring that $A$ 
is isomorphic to a product of supersingular elliptic curves.
A polarization $\lambda$ of $A$ is a symmetric isogeny $\lambda:A\rightarrow  
\hat{A}=\Pic^0(A)$ subject to the positivity condition involving
the Poincar\'{e} line bundle given in Section \ref{type}.
 The degree $\deg(\lambda)$ of a polarization $\lambda$ is the 
degree of $\lambda$ as an isogeny. It is always a square
and the reduced degree $\rdeg(\lambda)$ of $\lambda$ is 
$\rdeg(\lambda)=\sqrt{\deg(\lambda)}$. A principal polarization $\lambda$
has $\deg(\lambda)=1$. Let $\A=(A=E^g,\lambda)$ be a principally polarized
superspecial abelian variety of dimension $g$ over $\bFp$ with
$\bFp$-isomorphism class $[\A]$.
An $(\ell)^g$-isogeny $\psi:\A=(A=E^g,\lambda)\rightarrow \A'
=(A'=E^g, \lambda')$ has kernel $\ker(\psi)$ a maximal isotropic
subgroup of $A[\ell]$ and $\psi^*(\lambda')=\ell\lambda$.

The theory of $\ell$-isogeny graphs for supersingular 
elliptic curves in characteristic $p$ extends to $(\ell)^g$-isogenies of
principally polarized superspecial abelian varieties
of dimension $g$, giving the 
big isogeny graph $\Gr_{\!g}(\ell,p)$, the little
isogeny graph $\gr_{\!g}(\ell, p)$ and the 
enhanced isogeny graph $\wgr_{\!g}(\ell,p)$; this was the focus of \cite{jz}.
These $(\ell)^g$-isogeny graphs are connected in higher dimensions $g$
as they are for $g=1$ \cite[Thm.~43]{jz}, but unlike the $g=1$ case
the big isogeny graph is not in general Ramanujan for $g>1$ \cite[Sect.~10]{jz}.

This paper further develops the category of $[\ell]$-polarized abelian
varieties introduced in \cite{jz}. Isogenies in this category form
cell complexes, whereas $(\ell)^g$-isogenies of principally polarized 
abelian varieties only form graphs.
A $g$-dimensional $[\ell]$-polarized abelian variety 
$\X=(X,\lambda)$
has a type $\ty (\X)$ with $0\leq \ty (\X)\leq g$ and 
a natural $[\ell]$-dual $\hat{\X}=(\hat{X}=\Pic^0(X), [\lambda])$,
cf.~Remark \ref{pol}.  If $\X'=(X',\lambda')$ is an $[\ell]$-polarized
abelian variety, there is a notion (Section \ref{type}) of $[\ell]$-isogeny 
$f:\X\rightarrow 
\X'$ if $\ty (\X)>\ty (\X')$. Compositions of 
$[\ell]$-isogenies can again be $[\ell]$-isogenies, resulting in Section
\ref{cell}
in an  $[\ell]$-isogeny complex
$\wco_{\ell}(\cA_0)$ once we fix a base principally polarized abelian variety
$\cA_0=(\mathbf{A}_0,\lambda_0)$.  This {\em enhanced isogeny complex} $\wco_{\ell}(\cA_0)$
is a {\em semi-simplicial complex}  in the 
sense of Eilenberg and Zilber \cite{ez}
or {\em $\Delta$-complex} as in \cite{hat}. Most importantly, in Theorem
\ref{pav T} we see that the connected
component of $\wco_{\ell}(\cA_0)$ containing the base $\cA_0$
is a quotient of the Bruhat-Tits building $\cB_g$ of the symplectic
group $\Sp_{2g}(\Q_\ell)$.  

Mapping an $[\ell]$-polarized abelian
variety $\A$ to its dual $\hat{\A}$ and $[\ell]$-isogenies $f:\A\rightarrow \A'$
to their duals $\hat{f}:\widehat{\A'}\rightarrow \hat{A}$ gives an involution
$\iota$ on $\wco_{\ell}(\cA_0)$; the quotient is the {\em little isogeny complex}
$\co_{\ell}(\cA_0)$. Unfortunately, the action of $\iota$ is not
admissible as in Section \ref{ha}, so $\co_{\ell}(\cA_0)$ does not
inherit the structure of a $\Delta$-complex from $\wco_{\ell}(\cA_0)$.
We can avoid this difficulty if we barycentrically subdivide $\wco_{\ell}(\cA_0)$
before taking the quotient by $i$ (then the action is admissible), but this
explodes the number of cells making it too difficult to compute examples.
So we keep the picture we have and examine the quotient by $i$ simplex by 
simplex.  We show that  if $g\leq 4$, then the quotient $\co_{\ell}(\cA_0)$,
although not a $\Delta$-complex, is a CW-complex.  We explain the cell
types we get in this case.

The real power of this approach to isogenies emerges
when we restrict to the case that the base $\cA_0$ is a superspecial 
abelian variety in characteristic $p$. Let $\H=\H_p$ be the definite
quaternion algebra of reduced discriminant $p$ with a maximal order
$\OO_{\H}$.  Then the isogeny
complexes $\wco_{\ell}(\cA_0)$ and $\co_{\ell}(\cA_0)$ are finite
and connected, and they are the quotients of the building $\cB_g$
of the symplectic group $\Sp_{2g}(\Q_\ell)$ by the quaternionic unitary
group $\rU_g(\OO_{\H}[1/\ell])$ and the general quaternionic unitary group
$\GU_g(\OO_{\H}[1/\ell])$, respectively.
Crucially, in this case it is possible to describe the isogeny complexes
completely in terms of hermitian forms over definite quaternion algebras.
Our notion of {\em Brandt complexes} generalizing Brandt matrices is 
developed in Section \ref{brandtc}.  Brandt complexes are amenable to 
machine computation and the paper contains many computational results.

In a forthcoming paper \cite{jz2}, we use the Brandt complex descriptions
of 
\[
\wco_{\ell}(\cA_0)=\rU_{g}(\OO_{\H}[1/\ell])\backslash \cB_g \quad\text{and}
\quad \co_{\ell}(\cA_0)=\GU_g(\OO_{\H}[1/\ell])\backslash \cB_g
\]
 to deduce results on the torsion in 
\[
H^\ast(\rU_{g}(\OO_{\H}[1/\ell]),\Z)\quad\text{and}\quad H^\ast(\GU_g(\OO_{\H}[1/\ell]),\Z)
\]
for a range of examples. We conclude this paper with an extended example,
giving in detail the with the $[2]$-isogeny complexes of superspecial abelian surfaces in characteristic $7$. We return to this example in \cite{jz2}
and compute cohomology.

\section{Superspecial abelian varieties in characteristic \texorpdfstring{$p$}{p}}
\label{super}

\subsection{\texorpdfstring{\except{toc}{\boldmath{$[\ell]$}}\for{toc}{$[\ell]$}}{[\unichar{"2113}]}-polarizations on abelian varieties}
\label{type}

The general reference for this section is \cite[Sect.~6]{jz}.  Proofs,
details, and additional citations to the literature can be found there.

Let $X$ be an abelian variety over a field $k$ (not necessarily algebraically 
closed) with dual abelian variety $\hat{X}=\Pic^{0}(X)$.  The Poincar\'{e}
line bundle on $X\times\hat{X}$ is denoted $\mathcal{P}$. A {\sf polarization}
of $X$ over $k$ is a symmetric isogeny  $\lambda:X\rightarrow \hat{X}$
defined over $k$ such that the line bundle $(1,\lambda)^\ast\mathcal{P}$ is 
ample. The {\sf degree} $\deg(\lambda)$ of a polarization $\lambda:X\rightarrow
\hat{X}$ is $\deg(\lambda)=\#\ker(\lambda)$; this is always a square by
the Riemann-Roch theorem (see \cite[Sect.~16]{Mu}).  The {\sf reduced degree}
of $\lambda$ is $\rdeg(\lambda)=\sqrt{\deg(\lambda )}$. A polarization of
degree $1$ is a {\sf principal polarization}.  If $\lambda:X\rightarrow
\hat{X}$ is a polarization and $\phi:X'\rightarrow X$ is an isogeny, then
\begin{equation}
\label{rower}
\phi^\ast (\lambda)\colonequals \hat{\phi}\circ\lambda\circ \phi:X'\rightarrow
\widehat{X'}
\end{equation}
is a polarization of $X'$ with
\begin{equation}
\label{rowers}
\deg(\phi^\ast(\lambda))=\deg(\lambda)\deg(\phi)^2\qquad\text{and}\qquad
\rdeg(\phi^\ast (\lambda))=\rdeg(\lambda)\deg(\phi).
\end{equation}

Suppose $n\in\NN$ and $(\charr k ,n)=1$ if $\charr k>0$. A polarization
$\lambda$ of $X$ gives rise to the nondegenerate Weil pairing
\begin{equation}
\label{Weil}
\langle\,\,\, ,\,\,\,\rangle_\lambda\colonequals
\langle \,\,\, ,\,\,\,\rangle_{X,\lambda}:\ker(\lambda)\times\ker(\lambda)\rightarrow
\Bmu .
\end{equation}
The more well known pairing
\begin{equation}
\label{radishy}
\langle\,\,\, ,\,\,\,\rangle_{\lambda, n}\colonequals
\langle \,\,\, ,\,\,\,\rangle_{X,\lambda, n}:X[n]\times X[n]\rightarrow
\Bmu_n
\end{equation}
can in fact be given in terms of \eqref{Weil} by
$$\langle\,\,\, ,\,\,\,\rangle_{X,\lambda, n}=\langle\,\,\,
,\,\,\,\rangle_{X,n\circ\lambda}\big|_{X[n]\times X[n]}.$$

The notion of an $[\ell]$-polarization of an abelian variety, 
introduced in \cite[Sect.~6]{jz}, is crucial to what follows
and is summarized below.
\begin{remark1}
\label{pol}
{\rm

Let $\ell$ be a prime such that $\ell\neq\charr k$.
Let $X$ be an abelian variety over the field $k$ with $\dim X=g$.
\begin{enumerate}[\upshape (a)]

\item
\label{pol0}
An $[\ell]$-polarization $\lambda$ on $X$ is 
a polarization such that $\ker(\lambda)\subseteq X[\ell]$.
In particular, the degree of an $[\ell]$-polarization $\lambda$ is 
$\deg(\lambda)=\ell^{2r}$ for $0\leq r\leq g$. We say that 
$r=\log_{\ell}(\rdeg(\lambda))$
is the {\sf type} $\ty(\lambda)$ of the $[\ell]$-polarization $\lambda$.
If $\lambda$ is an $[\ell]$-polarization on $X$ with 
$\ty(\lambda)=r$, we say that
$\X=(X,\lambda)$ is an $[\ell]$-polarized abelian variety of type $r$
and write $\ty(\X)=r$.

\item
\label{pol1.5}
Let $\X=(X,\lambda)$ be an $[\ell]$-polarized abelian variety with 
type $t(\X)=r$.
The {\sf $[\ell]$-dual} of $\X$ is the $[\ell]$-polarized 
abelian variety $\hat{\X}=(\hat{X}=\Pic^{0}(X), [\lambda])$ with 
the composition 
\[
X\stackrel{\lambda}{\longrightarrow}\hat{X}\stackrel{[\lambda]}
{\longrightarrow}X
\]
equal to multiplication by $\ell$. We have
$\ty(\hat{\X})=\hat{r}\colonequals g-r$.
If $\X=(X,\lambda)$ is principally polarized (so of type $0$),
then the $[\ell]$-dual polarization $[\lambda]$ on 
$X\cong\hat{X}$ is $\ell \lambda$.

\end{enumerate}
}
\end{remark1}

\begin{definition}
\label{poll}
{\rm 
Let $\ell$ be a prime such that $\ell\neq\charr k$.
Let $X$ be an abelian variety over the field $k$ with $\dim X=g$.
\begin{enumerate}[\upshape (a)]
\item
\label{pol2a}
Let $\X=(X, \lambda)$ and $\X'=(X',\lambda')$ be polarized abelian
varieties.  An isogeny $f:X\rightarrow X'$ is an {\sf isogeny} from
$\X$ to $\X'$, written $f:\X\rightarrow \X'$, if $\lambda =f^\ast
(\lambda')$.  Now suppose $\X$ and $\X'$ are $[\ell]$-polarized
abelian varieties of types $r$, $s$, respectively.  An isogeny
$f:\X\rightarrow \X'$ is {\sf strict} if $r\neq s$.  If
$f:\X\rightarrow \X'$ is strict, then $r>s$, $\deg(f)=\ell^{(r-s)}$,
and $\lambda:X\rightarrow\hat{X}$ factors through $f$.\\ 
{\bf WARNING:} An $[\ell]$-polarization $\lambda:X\rightarrow \hat{X}$
does {\em not} give an isogeny from $\X$ to $\hat{\X}$.

\item
\label{pol3}

Let $\X=(X,\lambda)$ be an $[\ell]$-polarized abelian variety of type $r$.
A subgroup $0\neq C\subseteq X[\ell]$ of order $\ell^d$, $1\leq d\leq r$,
 is an  {\sf $[\ell]$-subgroup} of $\X$
if $C\subseteq \ker(\lambda)$ and $C$ is isotropic
with respect to the Weil pairing $\langle\,\,\,\, , \,\,\,\rangle_\lambda$.
\end{enumerate}
}
\end{definition}
\begin{proposition}
\label{owl}
Suppose $\X=(X, \lambda)$ and $\X'=(X',\lambda')$ are $[\ell]$-polarized abelian
varieties of types $r$, $s$, respectively. Let $f:\X\rightarrow \X'$
be an isogeny.  Then $\hat{f}:\widehat{X'}\rightarrow \hat{X}$ gives 
an isogeny $\hat{f}:\widehat{\X'}\rightarrow \hat{\X}$. If $f:\X\rightarrow \X'$
is strict, then $\hat{f}:\widehat{\X'}\rightarrow \hat{\X}$ is strict.
\end{proposition}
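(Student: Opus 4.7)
\medskip

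\noindent\textbf{Proof proposal.} The plan is to reduce the proposition to a single identity between polarizations, verify it by a short diagram chase using the defining property of the $[\ell]$-dual polarization, and then handle strictness by a quick type count.

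First, since $f:X\to X'$ is an isogeny, its dual $\hat f:\widehat{X'}\to\hat X$ is automatically an isogeny of abelian varieties with $\deg(\hat f)=\deg(f)$. So the only real content is to check the polarization-compatibility condition for $\hat f$, namely that $[\lambda']=\hat f^{\ast}([\lambda])$. Unpacking the pullback formula \eqref{rower} and using the biduality identification $\widehat{\hat X}=X$ with $\widehat{\hat f}=f$, this is the equality
\[
[\lambda']\;=\;f\circ[\lambda]\circ\hat f\quad\text{as maps}\quad\widehat{X'}\longrightarrow X'.
\]

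Second, I would establish this identity as follows. From the hypothesis that $f:\X\to\X'$ is an isogeny of polarized abelian varieties, we have $\lambda=\hat f\circ\lambda'\circ f$. The defining property of $[\lambda]$ from Remark \ref{pol}\eqref{pol1.5} is $[\lambda]\circ\lambda=[\ell]_X$ (and dually $[\lambda']\circ\lambda'=[\ell]_{X'}$). Composing $\lambda=\hat f\circ\lambda'\circ f$ on the left with $[\lambda]$ gives $[\lambda]\circ\hat f\circ\lambda'\circ f=[\ell]_X$. Composing further on the left by $f$ and using $f\circ[\ell]_X=[\ell]_{X'}\circ f$ yields
\[
\bigl(f\circ[\lambda]\circ\hat f\bigr)\circ\lambda'\circ f\;=\;[\ell]_{X'}\circ f.
\]
Since $f$ is an isogeny, it is an epimorphism in the category of abelian varieties, so it cancels on the right, giving $\bigl(f\circ[\lambda]\circ\hat f\bigr)\circ\lambda'=[\ell]_{X'}$. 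But $[\lambda']$ is characterized uniquely by the analogous identity $[\lambda']\circ\lambda'=[\ell]_{X'}$ (with $\lambda'$ an isogeny, hence an epimorphism, $[\lambda']$ is determined by this equation). Therefore $f\circ[\lambda]\circ\hat f=[\lambda']$, as required. One may additionally double-check consistency with \eqref{rowers}: $\deg([\lambda])=\ell^{2(g-r)}$, $\deg(\hat f)=\ell^{r-s}$, and $\deg([\lambda])\deg(\hat f)^{2}=\ell^{2(g-s)}=\deg([\lambda'])$.

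Finally, strictness is essentially a bookkeeping step. By Remark \ref{pol}\eqref{pol1.5}, $\ty(\hat{\X})=g-r$ and $\ty(\widehat{\X'})=g-s$. If $f$ is strict then $r>s$, hence $g-s>g-r$, i.e.\ $\ty(\widehat{\X'})>\ty(\hat{\X})$, so $\hat f:\widehat{\X'}\to\hat\X$ is strict in the sense of Definition \ref{poll}\eqref{pol2a}.

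The steps are mechanical once one commits to the characterization of $[\lambda]$ by $[\lambda]\circ\lambda=[\ell]$. The only mildly delicate point, and what I would regard as the main thing to get right, is the bookkeeping around biduality—using $\widehat{\hat f}=f$ to rewrite $\hat f^{\ast}([\lambda])$ correctly—and the use of isogenies as epimorphisms to cancel $f$ (and $\lambda'$) from composite equations; these are standard but easy to mis-handle in the presence of the WARNING following Definition \ref{poll}\eqref{pol2a}, which reminds us that $\lambda$ itself is \emph{not} an isogeny of $[\ell]$-polarized varieties.
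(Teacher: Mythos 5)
Your proof is correct and takes essentially the same route as the paper's: starting from $\lambda=\hat f\circ\lambda'\circ f$, left-compose with $[\lambda]$ to get $[\lambda]\circ\hat f\circ\lambda'\circ f=\ell_X$, then conjugate to $f\circ[\lambda]\circ\hat f\circ\lambda'=\ell_{X'}$ and identify $f\circ[\lambda]\circ\hat f=[\lambda']=\hat f^\ast([\lambda])$, with strictness following from the type computation $\ty(\widehat{\X'})=g-s>g-r=\ty(\hat{\X})$. Your write-up simply makes explicit the epimorphism cancellations (of $f$ and of $\lambda'$) and the biduality identification $\widehat{\hat f}=f$ that the paper leaves tacit.
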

\begin{proof}
Since $f$ is an isogeny of polarized abelian varieties, by
\eqref{rower} that means $\lambda=\hat{f}\circ\lambda'\circ f$.  Now
by Remark \ref{pol}(\ref{pol1.5}) that means
$[\lambda]\circ\hat{f}\circ\lambda'\circ f=\ell$.  Hence,
$f\circ[\lambda]\circ\hat{f}\circ\lambda'=\ell$, and thus
$\hat{f}^\ast([\lambda])=f\circ[\lambda]\circ\hat{f}=[\lambda']$,
which implies $\hat{f}:\widehat{\X'}\rightarrow \hat{\X}$ is an
isogeny.

Since $\ty(\widehat{\X'})=\hat{s}=g-s$ and 
$\ty(\widehat{\X})=\hat{r}=g-r$, $\hat{f}$ is strict if and only if $f$ is.
\end{proof}
\begin{proposition}
\label{hawk} 
Suppose $\X=(X,\lambda)$ is an $[\ell]$-polarized abelian variety.
Let $C\subseteq X[\ell]$ be an $[\ell]$-subgroup of $\X$ of order
$\ell^d$, $1\leq d\leq r$,  with
$f_C:X\rightarrow X_C\colonequals X/C$
the isogeny taking the quotient by $C$.
Then $X_C =X/C$
has a canonical $[\ell]$-polarization $\lambda_C$ such that the 
composition
\[
X\stackrel{f_C}{\longrightarrow} X_C\stackrel{\lambda_{C}}{\longrightarrow} 
\widehat{X_C}\stackrel{\hat{f}_C}{\longrightarrow}\hat{X}
\]
is $\lambda$. The $[\ell]$-polarized abelian variety
$\X_C=(X_C,\lambda_C)$ has type $\ty(\X_C)=s=r-d$ and the 
isogeny $f_C:\X\rightarrow \X_C$ is strict as in Definition 
\textup{\ref{poll}\eqref{pol2a}}. Moreover a strict 
isogeny $f:\X\rightarrow \X'$ of $[\ell]$-polarized
abelian varieties is of the form $f_C$ with $\X'\cong\X_C$
for $C$ an $[\ell]$-subgroup of $\X$.
\end{proposition}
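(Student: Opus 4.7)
The plan is to construct $\lambda_C$ by a two-stage descent of $\lambda$, verify the $[\ell]$-polarization conditions, and then reverse the construction to handle the converse.

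Since $C\subseteq\ker\lambda$, the polarization $\lambda:X\to\hat X$ factors as $\lambda=\mu\circ f_C$ for a unique isogeny $\mu:X_C\to\hat X$. Taking duals and using $\hat\lambda=\lambda$ gives $\hat f_C\circ\hat\mu=\mu\circ f_C=\lambda$. Restricting $\hat\mu:X\to\widehat{X_C}$ to $C$ yields a homomorphism $\hat\mu|_C:C\to\ker\hat f_C$, and under the canonical identification $\ker\hat f_C\cong C^\vee$ arising from Cartier duality applied to $0\to C\to X\to X_C\to 0$, this homomorphism is exactly the map $C\to C^\vee$ induced by the Weil pairing $\langle\,,\,\rangle_\lambda$ of \eqref{Weil} on $C\times C$. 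The isotropy hypothesis forces $\hat\mu|_C=0$, so $\hat\mu$ descends through $f_C$ as $\hat\mu=\lambda_C\circ f_C$ for a unique morphism $\lambda_C:X_C\to\widehat{X_C}$, giving $\lambda=\hat f_C\circ\lambda_C\circ f_C$.

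Next I would verify that $\lambda_C$ is an $[\ell]$-polarization of type $r-d$. Symmetry follows from the uniqueness of this two-stage factorization applied to both $\lambda_C$ and its dual. Ampleness of the line bundle associated to $\lambda_C$ follows from $f_C^*(\lambda_C)=\lambda$ (cf.~\eqref{rower}) being ample together with the standard fact that ampleness descends along finite surjective morphisms. Degree multiplicativity in the factorization gives $\ell^{2r}=\deg\lambda=\ell^{2d}\deg\lambda_C$, so $\rdeg\lambda_C=\ell^{r-d}$ and $\ty(\X_C)=r-d$. For the kernel containment, $\hat f_C\circ\hat\mu=\lambda$ gives $\ker\hat\mu\subseteq\ker\lambda\subseteq X[\ell]$; since $\hat\mu=\lambda_C\circ f_C$ with $f_C$ surjective, $\ker\lambda_C=f_C(\ker\hat\mu)\subseteq f_C(X[\ell])\subseteq X_C[\ell]$. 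Because $d\geq 1$ the type strictly drops, so $f_C$ is strict in the sense of Definition \ref{poll}\eqref{pol2a}.

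For the converse, suppose $f:\X\to\X'$ is a strict isogeny with $\ty(\X)=r>s=\ty(\X')$, and set $C=\ker f$. Definition \ref{poll}\eqref{pol2a} guarantees $\lambda$ factors through $f$, so $C\subseteq\ker\lambda$ and $|C|=\deg f=\ell^{r-s}$. The factorization $\lambda=\hat f\circ\lambda'\circ f$ provided by that definition is itself a descent of $\lambda$ of the form above; running the Weil-pairing identification in reverse shows that its existence forces $C$ to be isotropic, so $C$ is an $[\ell]$-subgroup of $\X$. By uniqueness of the descent, $\lambda'=\lambda_C$ under the canonical isomorphism $X'\cong X_C$, giving $\X'\cong\X_C$ as $[\ell]$-polarized abelian varieties. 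I expect the main obstacle to be the Weil-pairing identification in the first paragraph: matching the intrinsic definition of $\langle\,,\,\rangle_\lambda$ on $\ker\lambda$ with the map $\hat\mu|_C:C\to C^\vee$ requires unwinding several Cartier-duality identifications, and is the step where the isotropy hypothesis is actually consumed. This descent lemma is standard (compare the theta-group discussion in Mumford), but needs care to state precisely.
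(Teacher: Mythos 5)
Your proof is correct and rests on the same descent principle as the paper's: use $C\subseteq\ker\lambda$ to factor $\lambda$ through $f_C$, then consume the isotropy hypothesis via Cartier duality to descend once more and obtain $\lambda_C$. The paper packages this slightly more compactly by observing $C\subseteq C^\perp\subseteq\ker\lambda$ and factoring $\lambda$ in one pass as $X\to X/C\to X/C^\perp$ with $X/C^\perp\cong\widehat{X_C}$, whereas your two-stage dualization reaches the same $\lambda_C$ by showing $\ker\hat\mu=C^\perp$ via the vanishing of the Weil-pairing map $\hat\mu|_C\colon C\to C^\vee$; both versions hinge on precisely the Cartier-duality identification you rightly flag as the technical heart, which the paper glosses with ``by properties of duals.''
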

\begin{proof}
First suppose that $C\subset X[\ell]$ is an $[\ell]$-subgroup of $\X$.
Then since $C$ is isotropic, we have $C\subset
C^\perp\subset\ker\lambda$.  Consider the corresponding
decomposition $$\lambda:X\xrightarrow{f_C} X_C\xrightarrow{\lambda_C}
X/C^\perp\to\hat{C}.$$ By properties of duals, there is an
identification $X/C^\perp=\widehat{X_C}$ such that the map
$X/C^\perp=\widehat{X_C}\to\hat{C}$ is isomorphic to $\hat{f}_C$.
Since $\lambda=\hat{f}_C\circ\lambda_C\circ f_C$ and $\lambda$ is a
polarization, its not hard to see that so is $\lambda_C$.  The formula
for the type follows trivially by counting dimensions.

Now suppose, $f:X\to X'$ be a strict isogeny.  Let $C=\ker f$.  Then
$\hat{f}:\hat{X}'\to\hat{X}$ corresponds to
$C^\perp\subset\ker\lambda$.  Thus $C\subset C^\perp$, hence $C$ is
isotropic and thus an $[\ell]$-subgroup of $\X$.
\end{proof}
A strict isogeny $f:\X\rightarrow \X'$ between $[\ell]$-polarized
abelian varieties 
 will be called an {\sf $[\ell]$-isogeny}.

\subsection{Superspecial abelian varieties}
\label{sup}
A superspecial abelian variety $A/\bFp$ with $\dim A=g$ is isomorphic
to the product of $g$ supersingular elliptic curves.
Fix a supersingular elliptic curve $E/\bFp$ with $\cO=\cO_E=\End(E)$
a maximal order in the rational definite quaternion algebra $\H_p$
ramified at $p$. It is a theorem of Deligne, Ogus, and Shioda that 
such a superspecial $A/\bFp$ is isomorphic to $E^g$.

Fix a prime $\ell$ with $\ell\neq p$.
\begin{definition}
\label{ss}
{\rm

For a dimension $g\geq 1$, let $\sSgpr$ be the set of $\bFp$-isomorphism
classes $[\A]$ of $g$-dimensional $[\ell]$-polarized 
superspecial
abelian varieties $\A=(A,\lambda)$  over $\bFp$ of type $r$, $0\leq r\leq g$.
In case $r=0$, $\sS_{\!g}(p)_0\colonequals\sS_{\!g}(\ell,p)_0$ is the set
of $\bFp$-isomorphism classes of principally polarized superspecial abelian
varieties.
The set $\sSgpr$ is finite; set 
\begin{equation}
\label{daikon}
h_g(\ell,p)_r= \#\sSgpr, \,\,
h_g(\ell,p)=\sum_{r=0}^g h_g(\ell,p)_r,\,\,\text{and }
h_g(p)=h_g(\ell,p)_0=\#\sS_{\!g}(p)_0.
\end{equation}
For $g$ fixed, it is convenient  to define for 
$0\leq r \leq g$:
\begin{equation}
\label{pistol}
\overline{r} =\overline{\hat{r}}= \{r, \hat{r}\}, \text{ viewed as a multiset}.
\end{equation}
Taking the $[\ell]$-dual 
 gives 
a canonical bijection 
\begin{equation}
\label{ghee}
\iota:\sSgpr\stackrel{\simeq}{\longrightarrow} \sSgpgr,\,\,
0\leq r\leq g, \text{ with } \iota([\A]=[\hat{\A}];
\end{equation}
see \cite[Defn.~28]{jz}.
Hence $h_g(\ell,p)_r=h_g(\ell,p)_{\hat{r}}$.   If $g$ and $p$ are fixed,
set 
\begin{equation}
\label{pistol1}
h(\ell;r)\colonequals h_g(\ell,p)_r,\quad h\colonequals h(\ell;0),
\quad\text{and}\quad
h(\ell)\colonequals h_g(\ell,p).
\end{equation}

}
\end{definition}
Table \ref{dodge} gives the
$h_g(\ell, p)_r$ which we need for subsequent examples.
\begin{center}
\begin{table}[h]
\begin{tabular}{c|c|c||c|c}
 $g$ & $\ell$ & $p$ &  $h_g(\ell,p)_0$ & 
$h_g(\ell,p)_1$ \\ \hline \hline
$2$ & $2$ & $7$ &  $2$ & $4$\\
$2$ & $2$ & $11$ & $5$ & $10$ \\ 
$3$&   $2$ & $3$ & $2$ & $3$\\
\end{tabular}\\[.25in]
\caption{$h_g(\ell,p)_r=\#\sS_{\!g}(\ell,p)_r
=h_g(\ell,p)_{\hat{r}}= \#\sS_{\!g}(\ell,p)_{\hat{r}}$}
\label{dodge}
\end{table}
\end{center}
Using the notation of \eqref{pistol} and \eqref{ghee}, put
\begin{equation}
\label{water}
\begin{split}
\sSgpr&=\{[\A_1],\ldots , [\A_{h(\ell ;r)}]\}\text{ with }
\A_i=(A_i,\lambda_i)\text{ and }
\sSgpz =\{[\A_1],\ldots , [\A_h]\},\\
\sSgpl & =\coprod_{r=0}^{g}\sSgpr =\{[\A_1],\ldots ,[\A_{h(\ell)}]\},\\
\sS_{\!g}(\ell,p)_{\overline{r}}&=\begin{cases}
\left(\sSgpr\coprod \sSgpgr\right)/\iota\quad\text{if $r\neq \hat{r}$,} \\
\sS_{\!g}(\ell,p)_{g/2}/\iota \quad\text{if $r=g/2$}.
\end{cases}\\
\osSgpl&=
\coprod_{r=0}^{\lfloor g/2\rfloor} \sS_{\!g}(\ell,p)_{\overline{r}}.
\end{split}
\end{equation}
\begin{remark1}
\label{swan}
{\rm
We can identify $\overline{\sS}_g(\ell,p)_{\overline{r}}$ with the
set of multisets $[\overline{\A}]\colonequals \{[\A],[\hat{\A}]\}$ with $[\A]\in\sS_{\!g}(\ell,p)_r$.
If $r\neq \hat{r}$, i.e., if $r\neq g/2$, then
$\sS_{\!g}(\ell,p)_{\overline{r}}\cong\sSgpr\cong\sSgpgr$.
Elements of $\sS_{\!g}(\ell,p)_{\overline{r}}$ have type 
$\overline{r}=\{r, \hat{r}\}$.
Let 
\begin{equation}
\label{read}
h_g(\ell,p)_{\overline{r}}=\#\sS_{\!g}(\ell,p)_{\overline{r}}\quad
\text{and}\quad \overline{h}_g(\ell,p)=\#\osSgpl=\sum_{r=0}^{\lfloor g/2\rfloor}
h_g(\ell,p)_{\overline{r}}.
\end{equation}
Then  $h_g(\ell,p)_{\overline{r}}=h_g(\ell,p)_{r}$ if $r\neq g/2$.
In Table \ref{dodge1} we augment Table \ref{dodge} 
by giving the $h_g(\ell,p)_{\overline{r}}$.
\begin{center}
\begin{table}[h]
\begin{tabular}{c|c|c||c|c}
 $g$ & $\ell$ & $p$ &  $h_g(\ell,p)_{\overline{0}}$ & 
$h_g(\ell,p)_{\overline{1}}$ \\ \hline \hline
$2$ & $2$ & $7$ & $2$ & $4$ \\
$2$ & $2$ & $11$ & $5$ & $8$ \\ 
$3$&   $2$ & $3$ & $2$ & $3$\\
\end{tabular}\\[.25in]
\caption{$h_g(\ell,p)_{\overline{r}}=\#\sS_{\!g}(\ell,p)_{\overline{r}}$}

\label{dodge1}
\end{table}
\end{center}
}
\end{remark1}

It is key that the image of an $[\ell]$-polarized superspecial 
abelian variety under a strict isogeny is superspecial:
\begin{proposition}
\label{beaten}
Suppose $\A=(A,\lambda)$ is an $[\ell]$-polarized superspecial abelian
variety  with  type $\ty(\A)=r$  and $C\leq A[\ell]$ is an $[\ell]$-subgroup of $\A$
of order $\ell^d$ , $1\leq d\leq r$,  as in
Definition \textup{\ref{poll}\eqref{pol3}}. Then $A/C$ is a superspecial
abelian variety and hence $\A_C=(A/C,\lambda_C)$ as in 
Proposition \textup{\ref{hawk}} is an $[\ell]$-polarized superspecial abelian
variety with $\ty(\A_C)=s=r-d$.
\end{proposition}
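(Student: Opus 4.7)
The plan is to reduce the statement to the fact that \emph{being superspecial is a condition on the $p$-divisible group}, and then observe that an $\ell$-isogeny with $\ell\neq p$ leaves the $p$-divisible group unchanged up to canonical isomorphism.

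First I would recall a characterization of superspeciality that is manifestly local at $p$. Namely, an abelian variety $B/\bFp$ with $\dim B = g$ is superspecial if and only if its $a$-number $a(B)=\dim_{\bFp}\Hom(\alpha_p, B)$ equals $g$, equivalently $B[F]\cong(\alpha_p)^g$, equivalently $B[p^\infty]\cong E[p^\infty]^g$ for a (equivalently, any) supersingular elliptic curve $E$. That these are equivalent, and that they combined with superspeciality of some isogenous variety force $B\cong E^g$ (for $g\geq 2$, by Deligne--Ogus--Shioda), is standard; for $g=1$ superspecial and supersingular coincide.

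Next I would use the hypothesis that $C\subseteq A[\ell]$ is an $[\ell]$-subgroup of order $\ell^d$, with $\ell\neq p$. Hence the isogeny $f_C:A\to A/C$ has degree $\ell^d$ coprime to $p$, so $f_C$ induces an isomorphism on $p$-divisible groups
\[
f_C\colon A[p^\infty]\stackrel{\simeq}{\longrightarrow}(A/C)[p^\infty].
\]
Since $\A$ is superspecial, $A[p^\infty]\cong E[p^\infty]^g$, and therefore $(A/C)[p^\infty]\cong E[p^\infty]^g$ as well, which by the criterion above makes $A/C$ superspecial.

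Finally, to obtain the claimed $[\ell]$-polarized structure, I would simply invoke Proposition \ref{hawk}: since $C$ is an $[\ell]$-subgroup of $\A$, that proposition produces the canonical $[\ell]$-polarization $\lambda_C$ on $A/C$ making $f_C:\A\to\A_C$ a strict isogeny with $\ty(\A_C)=r-d$. Combining with the preceding paragraph shows that $\A_C$ is an $[\ell]$-polarized superspecial abelian variety of the stated type.

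I expect the main point requiring care to be the equivalence of the various definitions of superspecial and, in particular, the implication that $(A/C)[p^\infty]\cong E[p^\infty]^g$ upgrades to $A/C\cong E^g$; this is where one appeals to Deligne--Ogus--Shioda (for $g\geq 2$) or the triviality that supersingular equals superspecial in dimension one. All of the rest is formal.
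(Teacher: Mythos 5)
Your proof is correct and takes essentially the same conceptual route as the paper, but unpacks a citation rather than invoking it. The paper simply cites Oort's fact (his book, p.~36) that the image of a superspecial abelian variety under a \emph{separable} isogeny is again superspecial, and then applies Proposition~\ref{hawk}. You instead prove the relevant case of that fact from scratch: you note that superspeciality is a condition on $B[p^\infty]$ (equivalently on the $a$-number), observe that $f_C$ has degree $\ell^d$ prime to $p$ and therefore induces an isomorphism on $p$-divisible groups, and conclude via the $a$-number characterization and Deligne--Ogus--Shioda that $A/C$ is superspecial. Both are valid; your version is more self-contained and makes explicit why the isogeny being prime to $p$ is what matters, while the paper's one-line citation is shorter but requires the reader to know or look up the Oort reference. (A small remark: the paper's cited fact is about separable isogenies in general, which is a priori broader than ``degree prime to $p$,'' but for a superspecial source the $p$-torsion is local-local, so separability forces the degree to be prime to $p$; in the present situation the degree is $\ell^d$ so the two hypotheses coincide and there is no gap.)
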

\begin{proof}
This follows from Proposition \ref{hawk} and the fact the image
under a 
separable isogeny of a superspecial abelian variety is superspecial---see
\cite[p.~36]{Oo}.
\end{proof}

We now define two equivalence relations on strict isogenies of 
$[\ell]$-polarized superspecial abelian varieties.
\begin{definition}
\label{monk}
{\rm
Suppose $[\A=(A, \lambda)]\in\sSgpr$ and
$[\A'=(A',\lambda')]\in\sSgps$
with $0\leq s<r\leq g$.  Suppose $f, g:\A\rightarrow \A'$ are strict
isogenies.  
\begin{enumerate}[\upshape (a)]
\item
\label{monk1}
Say  $f\sim_{b}g$ if there exists $\alpha_2\in\Aut(\A')$ such
that $f=\alpha_2 g$. Write $[f]_b$ for the equivalence class with
respect to $\sim_b$ containing $f$.
Put
\[
\Iso_\ell(\A,\A')=\{[f]_b\mid \text{$f:\A\rightarrow \A'$ is a strict
isogeny}\}.
\]
Set
\[
\Iso_\ell(\A)_s=\{[f]_b\in\Iso_\ell(\A,\A'') \text{for some
$\A''\in\sSgps$}\}.
\]
\item
\label{monk2}
Say $f\sim_{l}g$ if there exist $\alpha_1\in\Aut(\A)$ and
$\alpha_2\in\Aut(\A')$ such that $f=\alpha_2 g \alpha_1$.
Write $[f]_l$ for the equivalence class with respect to $\sim_l$
containing $f$.
Put
\[
\iso_\ell(\A,\A')=\{[f]_l\mid \text{$f$ is a strict isogeny from $\A$
to $\A'$}\}.
\]
Set
\[
\iso_\ell(\A)_s=\{[f]_l\in\Iso_\ell(\A,\A'') \text{for some
$\A''\in\sSgps$}\}.
\]
\end{enumerate}
}
\end{definition}
\begin{remark1}
\label{blue}
{\rm
In light of Proposition \ref{hawk}, we could equivalently give
Definition \ref{monk} in terms of the $[\ell]$-subgroups of
Definition \ref{poll}\eqref{pol3}. Suppose $[\A]\in
\sSgpr$ for $0\leq r\leq g$. 
For $0\leq s<r$, set 
\begin{equation}
\label{Iso}
\Iso_\ell(\A)_s=\{C\mid C\text{ is an $[\ell]$-subgroup of $\A$ 
of order $\ell^{r-s}$}\}.
\end{equation}
Define an equivalence relation $\sim$ on $[\ell]$-subgroups of $\A$
by $C\sim C'$ if there exists $\alpha\in\Aut(\A)$ such that
$\alpha(C)=C'$.  Let $[C]$ denote the equivalence class with respect
to $\sim$ containing $C$.  Set
\begin{equation}
\label{robin1}
\iso_\ell(\A)_s=\{[C]\mid C\in \Iso_\ell(\A)_s\}.
\end{equation}
}
\end{remark1}
\begin{proposition}
\label{wren}
Suppose  $[\A]\in\sSgpr$.  Then $\#\Iso_\ell(\A)_s$ neither depends on
$p$ nor $g$ nor on the choice of $[\A]\in\sSgpr$. 
\end{proposition}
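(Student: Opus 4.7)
The plan is to reduce the counting of elements of $\Iso_\ell(\A)_s$ to a purely linear-algebraic count of isotropic subspaces in a standard symplectic $\F_\ell$-vector space, whose cardinality manifestly depends only on $\ell$, $r$, and $s$.

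First, I invoke Remark \ref{blue} to identify $\Iso_\ell(\A)_s$ with the set of $[\ell]$-subgroups $C$ of $\A$ of order $\ell^{r-s}$ (equivalently, equivalence classes of strict isogenies under post-composition with $\Aut(\A')$ correspond bijectively to the kernels $C = \ker(f)$, as established in Proposition \ref{hawk}). By Definition \ref{poll}\eqref{pol3}, such a $C$ is a subgroup of $\ker(\lambda)$ that is isotropic with respect to the Weil pairing $\langle\,,\,\rangle_\lambda$ of \eqref{Weil}.

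Next, I analyze $\ker(\lambda)$. Since $\ty(\A) = r$, one has $\deg(\lambda) = \ell^{2r}$; since $\ell \neq p$, the group scheme $\ker(\lambda)$ is étale, and since it is contained in $A[\ell]$ it is annihilated by $\ell$. Hence $\ker(\lambda)$ is naturally an $\F_\ell$-vector space of dimension $2r$. The Weil pairing $\langle\,,\,\rangle_\lambda : \ker(\lambda) \times \ker(\lambda) \to \Bmu$ is nondegenerate and alternating, so $(\ker(\lambda), \langle\,,\,\rangle_\lambda)$ is a symplectic $\F_\ell$-vector space of dimension $2r$, and all such spaces are isomorphic.

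Therefore $\#\Iso_\ell(\A)_s$ equals the number of $(r-s)$-dimensional isotropic $\F_\ell$-subspaces of the standard symplectic space of dimension $2r$ over $\F_\ell$. This is a classical $q$-analog count ($q = \ell$) depending only on the triple $(\ell, r, s)$; in particular it is independent of $p$, of $g$, and of the choice of $[\A] \in \sSgpr$. The main (and essentially only) obstacle is verifying that the Weil pairing on $\ker(\lambda)$ really is nondegenerate and alternating, but this is standard for polarizations and is already embedded in the framework of \eqref{Weil} from the excerpt; everything else is then an immediate transport of structure.
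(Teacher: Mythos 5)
Your proof is correct and follows essentially the same route as the paper: identify $\Iso_\ell(\A)_s$ with isotropic subgroups of order $\ell^{r-s}$ in $\ker(\lambda)$ via Remark \ref{blue}/\eqref{Iso}, observe that $\ker(\lambda)$ is an $\F_\ell$-symplectic space of dimension $2r$, and conclude the count depends only on $\ell$, $r$, $s$. You have merely spelled out a few standard facts (\'etaleness of $\ker(\lambda)$ for $\ell\neq p$, nondegeneracy of the Weil pairing) that the paper leaves implicit.
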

\begin{proof}
Let $\A=(A,\lambda)$.  By Definition \ref{poll}(\ref{pol3}) and
\eqref{Iso} $$\Iso_\ell(\A)_s=\{C \mid C\subset\ker(\lambda) \text{ is
  an isotropic subgroup of order $\ell^{r-s}$}\},$$ where
  $\ker(\lambda)$ is an $\ell$-group of rank $2r$ with a
  nondegenerate symplectic pairing.  Thus $\#\Iso_\ell(\A)_s$ depends
  only on $\ell$, $r$, and $s$.
\end{proof}
For $0\leq s<r\leq g$ and $[\A]\in \sSgpr$, set
\begin{equation}
\label{fly}
N(\ell)_{r,s}=\#\Iso_\ell(\A)_s.
\end{equation}
We saw in \cite[(30)]{jz} that 
\begin{equation}
\label{pinky}
N(\ell)_{r,0}=\prod_{k=1}^r(\ell^k+1).
\end{equation}
The general formula for $N(\ell)_{r,s}$ is given below; 
it reduces to \eqref{pinky} when $s=0$. We define $N(\ell)_{r,r}=1$
for all $r\geq 0$.
\begin{proposition}
\label{mouse}
We have
\[
N(\ell)_{r,s}=\prod_{k=0}^{r-s-1}\frac{\ell^{2(r-k)}-1}{\ell^{k+1}-1} .
\]
\end{proposition}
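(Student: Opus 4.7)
By Proposition \ref{wren}, $N(\ell)_{r,s}$ equals the number of isotropic subgroups of order $\ell^{r-s}$ inside $\ker(\lambda)$, where $(\ker(\lambda),\langle\,\,,\,\,\rangle_\lambda)$ is a symplectic $\F_\ell$-vector space of dimension $2r$. So the statement is really a classical counting result: in a $2r$-dimensional nondegenerate symplectic space $V$ over $\F_\ell$, count the $d$-dimensional totally isotropic subspaces, where $d=r-s$. The plan is to count ordered isotropic bases and divide by the number of ordered bases of a fixed $d$-space.

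\emph{Step 1 (numerator).} Choose an ordered tuple $(v_1,\dots,v_d)$ with the $v_i$ linearly independent and spanning an isotropic subspace. Having chosen $v_1,\dots,v_i$ spanning an isotropic subspace $W_i$, the next vector $v_{i+1}$ must lie in $W_i^\perp$ (to remain isotropic) and outside $W_i$ (to remain linearly independent). Since the form is nondegenerate and $W_i$ is isotropic of dimension $i$, we have $W_i\subset W_i^\perp$ and $\dim W_i^\perp = 2r-i$. Hence there are $\ell^{2r-i}-\ell^{i}$ choices for $v_{i+1}$. The total number of such ordered bases is therefore
\[
\prod_{i=0}^{d-1}\bigl(\ell^{2r-i}-\ell^{i}\bigr)=\ell^{\binom{d}{2}}\prod_{i=0}^{d-1}\bigl(\ell^{2(r-i)}-1\bigr).
\]

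\emph{Step 2 (denominator).} The number of ordered bases of a fixed $d$-dimensional $\F_\ell$-space is the standard
\[
\prod_{i=0}^{d-1}\bigl(\ell^d-\ell^i\bigr)=\ell^{\binom{d}{2}}\prod_{j=1}^{d}\bigl(\ell^j-1\bigr).
\]

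\emph{Step 3 (assembly).} Dividing the count in Step 1 by that in Step 2, the powers of $\ell^{\binom{d}{2}}$ cancel and, after reindexing $k=i$ in the numerator and $k=j-1$ in the denominator, we obtain
\[
N(\ell)_{r,s}=\prod_{k=0}^{d-1}\frac{\ell^{2(r-k)}-1}{\ell^{k+1}-1}=\prod_{k=0}^{r-s-1}\frac{\ell^{2(r-k)}-1}{\ell^{k+1}-1},
\]
as required. As a sanity check, setting $s=0$ so that $d=r$ recovers $N(\ell)_{r,0}=\prod_{k=1}^{r}(\ell^k+1)$ of \eqref{pinky} after telescoping $\ell^{2(r-k)}-1=(\ell^{r-k}-1)(\ell^{r-k}+1)$, and setting $s=r$ gives an empty product $=1$, matching the convention $N(\ell)_{r,r}=1$.

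The only mildly delicate point is the inductive dimension count $\dim W_i^\perp=2r-i$ together with $W_i\subset W_i^\perp$; this uses only the nondegeneracy of the symplectic form and the isotropy of $W_i$, both of which are automatic in our setting. No obstacle is expected beyond carefully tracking the indices.
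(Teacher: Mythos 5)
Your proof is correct and is essentially the same as the paper's: both count ordered isotropic bases by successively choosing $v_{k+1}\in W_k^\perp\setminus W_k$ (giving $\ell^k(\ell^{2(r-k)}-1)$ choices at step $k$), then divide by the number $\prod_{k=0}^{d-1}\ell^k(\ell^{k+1}-1)$ of ordered bases of a fixed $d$-dimensional subspace. The only cosmetic difference is that you factor out the $\ell^{\binom{d}{2}}$ explicitly before cancelling, whereas the paper cancels the $\ell^k$ factors termwise.
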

\begin{proof}
By the proof of Proposition \ref{wren}, $N(\ell)_{r,s}$ is equal to
the number of isotropic rank $r-s$ subgroups $C$ of an $\ell$-group $G$ of
rank $2r$ with a nondegenerate symplectic pairing.

If we pick a basis $(v_0,\ldots,v_{r-s-1})$ for $C$, we have
$\ell^{2r}-1$ choices for $v_0\in G\setminus\{0\}$.  Now $v_1$ must be
orthogonal to $v_0$ and independent of $v_0$ so we have
$\ell^{2r-1}-\ell=\ell(\ell^{2(r-1)}-1)$ choices for $v_1$, and in
general we have $\ell^k(\ell^{2(r-k)}-1)$ choices for $v_k$.  Thus
there are $$\prod_{k=0}^{r-s-1}\ell^k(\ell^{2(r-k)}-1)$$ isotropic
rank $r-s$ subspaces $C$ with basis specified.

But each $C$ has $$\prod_{k=0}^{r-s-1}\ell^k(\ell^{k+1}-1)$$ choices
of basis.  Dividing these two values gives the result.
\end{proof}

We use Proposition \ref{mouse} to tabulate $N(\ell)_{r,s}$ for
$0\leq s<r<3$ and $\ell =2$ in Table \ref{dodge2}.
The reader can verify that $N(\ell)_{3m-1,m-1}=N(\ell)_{3m-1,m}$ for $m\geq 1$,
a special case of which is the equality $N(2)_{2,0}=N(2)_{2,1}$
in Table \ref{dodge2}.
\begin{center}
\begin{table}[h]

\begin{tabular}{c|c|c|c|c|c}
$N(2)_{1,0}$ &  $N(2)_{2,0}$ & $N(2)_{2,1}$ &
$N(2)_{3,0}$ & $N(2)_{3,1}$ & $N(2)_{3,2}$\\ \hline \hline
$3$ & $15$ & $15$ & $135$ & $315$ & $63$ \\ 
\end{tabular}\\[.25in]
\caption{\label{bt1} $N(2)_{r,s}$ for $0\leq s<r\leq 3$}
\label{dodge2}
\end{table}
\end{center}

\section{Isogeny graphs of \texorpdfstring{$[\ell]$}{[\unichar{"2113}]}-polarized superspecial abelian varieties}
\label{gen}

Let $\ell$ and $p$ be primes with $p\neq \ell$.
Recall that an $(\ell)^g$-isogeny of a $g$-dimensional 
principally polarized abelian variety is taking the quotient by a maximal
isotropic subgroup of its $\ell$-torsion. In \cite{jz} we defined 
{\sf big, little} and {\sf enhanced} $(\ell)^g$-isogeny graphs of 
$g$-dimensional principally polarized abelian varieties. In the case
of principally polarized superspecial abelian varieties over $\bFp$ it
is possible to construct these graphs from definite quaternion algebras
and hermitian forms over their maximal orders. Such a construction gives
the {\sf Brandt graphs} of \cite[Sect.~5]{jz}.

In this section we relax the requirement of principal polarizations to that of
$[\ell]$-polarizations and consider $[\ell]$-isogeny graphs; see 
Section \ref{type} for definitions.  The definitions of the big, little,
and enhanced $(\ell)^g$-isogeny graph carry over to this more
general setting. The major results of \cite{jz} carry over to 
$[\ell]$-isogeny graphs: We prove in Theorem \ref{ghostly} that they
are connected; we show in Section \ref{gravel} that they are
sometimes (albeit rarely) Ramanujan and sometimes non-Ramanujan; and we
show that the little and enhanced graphs are $\ell$-adically uniformized
by the $1$-skeleton of the Bruhat-Tits building for $\Sp_{2g}(\Q_\ell)$
in Section \ref{unif}.

\subsection{Graphs}
\label{grs}
\begin{definition}
\label{grdef}
{\rm
A (finite) graph $\Grr$ has a finite
set of vertices $\Ver(\Grr)=\{v_1,\ldots , v_s\}$
and a finite set of (directed) edges $\Ed(\Grr)$.  And edge
$e\in\Ed(\Grr)$ has initial vertex $o(e)$ and terminal
vertex $t(e)$.  For vertices $v_i,\,v_j\in \Ver(\Grr)$, put
\[
\Ed(\Grr)_{ij}=\{e\in\Ed(\Grr)\mid o(e)=v_i \text{ and }
t(e)=v_j\}.
\]
The {\sf adjacency matrix} $\Ad(\Grr)\in\Mat_{s\times s}(\Z)$is the 
matrix with 
\[
\Ad(\Grr)_{ij}=\#\Ed({\Grr})_{ij}.
\]
We place no further restrictions on our definition of a graph.
Serre \cite{Ser} requires graphs to be {\sf graphs with opposites}:
every directed edge $e\in\Ed(\Grr)$ has an {\sf opposite}
edge $\overline{e}\in\Ed(\Grr)$.
An edge $e$ with $\overline{e}=e$
is called a {\sf half-edge}. Serre forbids half-edges; we will call a graph
with opposites satisfying his requirements a {\sf graph without half-edges}.
Kurihara \cite{Kur} relaxes Serre's definition to allow
half-edges on a graph with opposites 
giving the notion of a {\sf graph with half-edges}, also 
called an {\sf h-graph} as  in \cite{ijklz1} and \cite{ijklz2}).
A graph with half-edges may have $\emptyset$ as its set of half-edges,
so every graph without half-edges is a graph with half-edges.
}
\end{definition}
\begin{definition}
\label{weight1}
{\rm
\begin{enumerate}[\upshape (a)]
\item
\label{weight11}
A {\sf graph with weights} is a graph with opposites together with a
{\sf weight function} $\w$ mapping vertices and edges to positive integers
that agrees on edges and their opposites and such that for each edge
$e$ 
we have $\w(e)|\w(o(e))$ (which implies $\w(e)|\w(t(e))$ 
since $\w(e)=\w(\overline{e})$).
\item
\label{wad}
The {\sf  weighted adjacency matrix} $A_w:=\Adw(\Grr)$ 
of a graph with weights $\Grr$
with vertices  $\Ver(\Grr)=\{v_1,\ldots , v_s\}$ is
\[
(A_w)_{ij}=\sum_{e\in\Ed({\Grr})_{ij}}\frac{\w(v_i)}{\w(e)}.
\]
\end{enumerate}
}
\end{definition}

\subsection{The enhanced \texorpdfstring{\except{toc}{\boldmath{$[\ell]$}}\for{toc}{$[\ell]$}}{[\unichar{"2113}]}-isogeny graph
\texorpdfstring{\except{toc}{\boldmath{$\wgr_{\!g}([\ell], p)$}}\for{toc}{$\wgr_{\!g}([\ell], p)$}}{tilde gr\unichar{"5F}g([\unichar{"2113}],p)} and its subgraphs
\texorpdfstring{\except{toc}{\boldmath{$\wgr_{\!g}([\ell],p)_{r,s}$}}\for{toc}{$\wgr_{\!g}([\ell], p)_{r,s}$}}{tilde gr\unichar{"5F}g([\unichar{"2113}],p)\unichar{"5F}r,s}
}
\label{enhanced}
The {\sf enhanced} $[\ell]$-isogeny graph
$\wgr_{\!g}([\ell],p)$
has  vertices 
$\Ver(\wgr_{\!g}([\ell],p))=\sSgpl=\coprod_{r=0}^{g}\sSgpr$
in the  notation \eqref{water}. 
In particular, each vertex of $\wgr_{\!g}([\ell],p)$ 
has a type $r$ with $0\leq r\leq g$. 
The vertices $\sS_{\!g}(\ell,p)_0$ of type $0$
and $\sS_{\!g}(\ell,p)_g$ of type $g$ are the {\sf special} vertices.

The 
edges
of $\wgr_{\!g}([\ell],p)$ 
from the vertex $[\A_i]\in\sS_{\!g}(\ell,p)_r
\subseteq\sS_{\!g}(\ell,p)$
to the vertex  $[\A_j]\in\sS_\ell(g,p)_s
\subseteq\sS_{\!g}(\ell,p)$
are
\[
\Ed(\wgr_{\!g}([\ell],p))_{i,j}
=\iso_\ell(\A_i , \A_j )\cup\iso_\ell(\hat{\A}_i,
\hat{\A}_j).
\]
Note that at most one of the sets $\iso_\ell(\A_i ,\A_j)$,
$\iso_\ell(\hat{\A}_i,
\hat{\A}_j)$ is nonempty.
We denote by $\wgr_{\!g}([\ell],p)_{r,s}=\wgr_{\!g}([\ell],p)_{s,r}$ 
the subgraph of $\wgr_{\!g}([\ell],p)$ induced by the vertex
subset 
\[
\sS_{\!g}(\ell,p)_{r}
\cup\sS_{\!g}(\ell,p)_s\subseteq \Ver(\wgr_{\!g}([\ell],p))=
\sS_{\!g}(\ell,p). 
\]
In particular $\Ed(\wgr_{\!g}([\ell],p)_{r,r})=
\emptyset$ for $0\leq r\leq g$.

\begin{remark1}
\label{dirt}
{\rm
The graphs $\wgr_{\!g}([\ell],p)$ and $\wgr_{\!g}([\ell],p)_{r,s}$,
$0\leq r,s\leq g$, are graphs with opposites: if 
$e\in\Ed(\wgr_{\!g}([\ell],p)_{r,s}=\wgr_{\!g}([\ell],p)_{s,r})$ 
corresponds to an isogeny $f$,
then $\overline{e}$
corresponds to the dual isogeny $\hat{f}$.

Define a weight function $\w$ on $\wgr_{\!g}([\ell],p)$ by
$\w([\A])=\#\Aut(A,\lambda)$ for $\A=(A, \lambda)
\in\sSgpl=\Ver(\wgr)$.
On edges put $\w(e)=\#\Aut(f:\A\rightarrow \A')$ if $e$ corresponds to the equivalence
class of the strict isogeny $f:\A\rightarrow\A'$.
The subgraphs $\wgr_{\!g}([\ell],p)_{r,s}$ inherit weights $\w$
from $\wgr_{\!g}([\ell],p)$.

The enhanced isogeny graph $\wgr_{\!g}([\ell],p)$ has a natural
involution.
Let $\iota:\wgr_{\!g}([\ell],p)\rightarrow \wgr_{\!g}([\ell],p)$
and by restriction $\iota:\wgr_{\!g}([\ell],p)_{r,s}
\rightarrow\wgr_{\!g}([\ell],p)_{\hat{s},\hat{r}}$
 be the map with $\iota^2$ the identity defined
on vertices by $\iota([\A])=[\hat{\A}]$ and on edges such
that if $e\in\Ed(\wgr_{\!g}([\ell],p)_{r,s})$ corresponds to the class
$[C]$, then $\iota(e)\in\Ed(\wgr_{\!g}([\ell],p)_{\hat{r},\hat{s}})$ corresponds
to the same class $[C]$.
This is consistent with the definition of
$\iota:\sSgpl\rightarrow\sSgpl$ in Remark \ref{swan}.  The involution
$\iota$ can fix vertices of $\wgr_{\!g}([\ell],p)$.  It does not fix any
edge of $\wgr_{\!g}([\ell],p)$, but can map an edge to its opposite. 
}
\end{remark1}
The weighted adjacency matrix $A\colonequals \Adw(\wgr_{\!g}([\ell],p))$
of Definition \ref{weight1}\eqref{wad} is a block matrix
$A=[A_{r,s}]_{0\leq r,s\leq g}$ with $A_{r,s}$ arising from the edges
going from the vertices $\sS_{\!g}(\ell, p)_r\subseteq \Ver(\wgr_{\!g}([\ell],p))$
to the vertices $\sS_{\!g}(\ell,p)_s$. 
It is convenient to establish the notation that $\zm$ is a matrix
having all entries $0$ with size determined by the context.
We have that $A_{r,s}=A_{\hat{r},\hat{s}}$;
in particular $A_{r,\hat{r}}=A_{\hat{r},r}$.
Moreover $A_{r,r}=\zm$ since
$\Ed(\wgr_{\!g}([\ell],p)_{r,r})=\emptyset$.
The matrix $A_{r,s}$ has size $h_g(\ell,p)_r\times h_g(\ell,p)_s$.
In particular the matrix $A_{r,\hat{r}}$ is square since
$h_g(\ell, p)_r=h_g(\ell, p)_{\hat{r}}$.
The matrix $A_{r,s}$ is a constant row-sum matrix with all the rows adding up to
$N(\ell)_{r,s}$ if $r>s$, $N(\ell)_{\hat{r},\hat{s}}$ if $s>r$, and 
$0$ if $s=r$.
The matrix $\Adw(\wgr_{\!g}([\ell],p)_{r,s})$ is of size
$(h_g(\ell,p)_r + h_g(\ell, p)_s)\times (h_g(\ell, p)_r + h_g(\ell, p)_s)$
if $r\neq s$, and of size $h_g(\ell,p)_r\times h_g(\ell, p)_r$ if $r=s$.
It is a block matrix of the form
\begin{equation}
\label{train}
\Adw(\wgr_{\!g}([\ell],p)_{r,s})=\begin{cases}\begin{bmatrix}
A_{r,r} & A_{r,s}\\
A_{s,r} & A_{s,s}\end{bmatrix}=
\begin{bmatrix}
\zm & A_{r,s}\\
A_{s,r} & \zm\end{bmatrix}\text{ if $r\neq s$,}\\
\begin{bmatrix}A_{r,r}\end{bmatrix}=\zm \text{ if $r=s$.}
\end{cases}
\end{equation}

Hence we have the following proposition.
\begin{proposition}
\label{edict}
\begin{enumerate}[\upshape (a)]
\item
\label{edict1}
The graph $\wgr_{\!g}([\ell],p)_{r,s}$ is bipartite for $0\leq r,s\leq g$.
\item
\label{edict2}
The graph $\wgr_{\!g}([\ell],p)_{r,\hat{r}}$ is a regular graph with regularity
$N(\ell)_{r,\hat{r}}$.
\end{enumerate}
\end{proposition}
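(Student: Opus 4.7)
For part (a), my plan is to read the bipartite structure directly off the block decomposition \eqref{train}. If $r = s$ the edge set is empty and the graph is trivially bipartite. If $r \neq s$, the vertex set of $\wgr_{\!g}([\ell],p)_{r,s}$ partitions as $\sS_{\!g}(\ell,p)_r \sqcup \sS_{\!g}(\ell,p)_s$; since both diagonal blocks $A_{r,r}$ and $A_{s,s}$ vanish in \eqref{train}, no edge joins two vertices of the same type, exhibiting the bipartition. This is essentially a one-line reading of the matrix shape.

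For part (b), the plan is to check that every row of the weighted adjacency matrix $\Adw(\wgr_{\!g}([\ell],p)_{r,\hat r})$ has the same sum. Each such row is either a row of the block $A_{r,\hat r}$ or of the block $A_{\hat r, r}$. The row-sum property recorded just before \eqref{train} says that a row of $A_{r,\hat r}$ sums to $N(\ell)_{r,\hat r}$ if $r > \hat r$ and to $N(\ell)_{\hat r,r}$ if $\hat r > r$, and the analogous statement applied to $A_{\hat r, r}$ produces the same number in the complementary case. Equivalently, the symmetry $A_{r,s} = A_{\hat r,\hat s}$ noted in the text yields $A_{r,\hat r} = A_{\hat r, r}$, which makes the equality of row sums immediate. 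Hence $\Adw(\wgr_{\!g}([\ell],p)_{r,\hat r})$ has constant row sum and the graph is regular of that degree, namely $N(\ell)_{r,\hat r}$.

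The main point requiring care, though minor, is notational: Proposition \ref{mouse} gives a formula for $N(\ell)_{r,s}$ valid when $r > s$, so the symbol $N(\ell)_{r,\hat r}$ should be read as the defined member of the pair $\{N(\ell)_{r,\hat r}, N(\ell)_{\hat r, r}\}$, i.e., the one with larger first index. Since $\wgr_{\!g}([\ell],p)_{r,\hat r} = \wgr_{\!g}([\ell],p)_{\hat r, r}$, one may assume without loss of generality that $r \ge \hat r$ while quoting the formula. The boundary case $r = \hat r$, possible only for even $g$ with $r = g/2$, yields an edgeless graph and is trivially $0$-regular. Apart from this bookkeeping, the argument is just a direct appeal to the block structure and row-sum statement that precede \eqref{train}, so I expect no real obstacle.
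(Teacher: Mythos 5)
Your proof is correct and follows exactly the route the paper intends: the paper offers no separate argument for Proposition \ref{edict}, simply asserting it follows ("Hence we have the following proposition") from the block decomposition \eqref{train}, the vanishing of the diagonal blocks $A_{r,r}$, the constant row-sum property of $A_{r,s}$, and the symmetry $A_{r,s}=A_{\hat r,\hat s}$, which is precisely the chain of observations you invoke. Your extra care about which of $N(\ell)_{r,\hat r}$ and $N(\ell)_{\hat r,r}$ is the defined symbol, and the degenerate edgeless case $r=\hat r$, is a reasonable clarification of a notational looseness in the paper's statement rather than a divergence in method.
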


\subsection{Two examples of \texorpdfstring{\except{toc}{\boldmath{$\Adw(\wgr_{\!g}([\ell],p))$}}\for{toc}{$\Adw(\wgr_{\!g}([\ell],p))$}}{Ad\unichar{"5F}w(tilde gr\unichar{"5F}g([\unichar{"2113}],p))}}
\label{ex}

\subsubsection{\texorpdfstring{$\Adw(\wgr_{\!2}([2],11))$}{Ad\unichar{"5F}w(tilde gr\unichar{"5F}2([2],11))}}
\label{dented}

Let $A=\Adw(\wgr_{\!2}([2],11))=[A_{r,s}]_{0\leq r,s\leq 2}$.
We have\\[.1in]
\[
A=\begin{bmatrix}
A_{0,0} & A_{0,1}& A_{0,2} \\
A_{1,0} & A_{1,1} & A_{1,2} \\
A_{2,0} & A_{2,1} & A_{2,2} \end{bmatrix} = \scalebox{.92}{%
$\left[ \begin{array}{ccccc|cccccccccc|ccccc}
0 &0& 0& 0& 0& 2& 4& 1& 0& 0& 0 &4& 4& 0& 0& 3& 4& 4& 4& 0\\
0& 0& 0& 0& 0& 0& 9& 0& 0& 0& 0& 0& 0& 6& 0& 9& 3& 3& 0& 0\\
0& 0& 0& 0& 0& 0& 3& 0& 6& 0& 0& 0& 0& 0& 6& 3& 1& 3& 0& 8\\
0& 0& 0& 0& 0& 3& 0& 0& 0& 2& 6& 0& 0& 1& 3& 3& 0& 0& 6& 6\\
0& 0& 0& 0& 0& 0& 0& 0& 6& 0& 6& 0& 3& 0& 0& 0& 0& 4& 3& 8\\
\hline
1& 0& 0& 2& 0& 0& 0& 0& 0& 0& 0& 0& 0& 0& 0& 1& 0& 0& 2& 0\\
1& 1& 1& 0& 0& 0& 0& 0& 0& 0& 0& 0& 0& 0& 0& 1& 1& 1& 0& 0\\
3& 0& 0& 0& 0& 0& 0& 0& 0& 0& 0& 0& 0& 0& 0& 3& 0& 0& 0& 0\\
0& 0& 1& 0& 2& 0& 0& 0& 0& 0& 0& 0& 0& 0& 0& 0& 0& 1& 0& 2\\
0& 0& 0& 3& 0& 0& 0& 0& 0& 0& 0& 0& 0& 0& 0& 0& 0& 0& 3& 0\\
0& 0& 0& 1& 2& 0& 0& 0& 0& 0& 0& 0& 0& 0& 0& 0& 0& 0& 1& 2\\
3& 0& 0& 0& 0& 0& 0& 0& 0& 0& 0& 0& 0& 0& 0& 0& 2& 0& 1& 0\\
1& 0& 0& 0& 2& 0& 0& 0& 0& 0& 0& 0& 0& 0& 0& 0& 0& 2& 1& 0\\
0& 2& 0& 1& 0& 0& 0& 0& 0& 0& 0& 0& 0& 0& 0& 3& 0& 0& 0& 0\\
0& 0& 2& 1& 0& 0& 0& 0& 0& 0& 0& 0& 0& 0& 0& 1& 0& 0& 0& 2\\
\hline
3& 4& 4& 4& 0& 2& 4& 1& 0& 0& 0& 0& 0& 4& 4& 0& 0& 0& 0& 0\\
9& 3& 3& 0& 0& 0& 9& 0& 0& 0& 0& 6& 0& 0& 0& 0& 0& 0& 0& 0\\
3& 1& 3& 0& 8& 0& 3& 0& 6& 0& 0& 0& 6& 0& 0& 0& 0& 0& 0& 0\\
3& 0& 0& 6& 6& 3& 0& 0& 0& 2& 6& 1& 3& 0& 0& 0& 0& 0& 0& 0\\
0& 0& 4& 3& 8& 0& 0& 0& 6& 0& 6& 0& 0& 0& 3& 0& 0& 0& 0& 0
\end{array}
\right]$ }.
\]

\subsubsection{\texorpdfstring{$\Adw(\wgr_{\!3}([2],3))$}{Ad\unichar{"5F}w(tilde gr\unichar{"5F}3([2],3))}}
\label{ex2}

With $g=3$, there are four possible types of vertices: $0$, $1$, $2$, $3$.
We will use the notation of Definition \ref{ss}.
The matrix $A=\Adw(\wgr_{\!3}([\ell],p))$ is size $h_3(\ell,p)\times
h_3(\ell,p)$. It 
is a $4\times 4$ block 
matrix $A=[A_{r,s}]_{0\leq r,s\leq 3}$.  We have $A_{r,r}=\zm$ for 
$0\leq r\leq 3$ and  $A_{r,s}=A_{\hat{r},\hat{s}}$.

From Table \ref{dodge}, we have $h_3(2,3)_0=h_3(2,3)_3=2$
and $h_3(2,3)_1=h_3(2,3)_2=3$.  Let
$A=\Adw(\wgr_3([2],3))= [A_{r,s}]_{0\leq r,s\leq 3}$.  We have\\[.1in]
\[
A= \begin{bmatrix} A_{0,0} & A_{0,1}& A_{0,2} & A_{0,3}\\
A_{1,0} & A_{1,1} & A_{1,2} & A_{1,3}\\
A_{2,0} & A_{2,1} & A_{2,2} & A_{2,3}\\
A_{3,0} & A_{3,1} & A_{3,2} & A_{3,3}\end{bmatrix} =
\left[
\begin{array}{c c|c c c |c c c| c c}
0 & 0 &  9 & 27 & 27  & 45  & 81 & 189  & 81 & 54\\
0 & 0 & 0 & 0 & 63 & 0 & 63 & 252 & 63 & 72\\
\hline
3 & 0 & 0 &0 & 0 &  6 & 9  & 0 & 15 & 0\\
3 & 0 & 0 &0 & 0 &  3& 3  & 9  & 9 & 6\\
1 & 2 & 0 &0 & 0 &  0 & 3 &  12 & 7 & 8\\
\hline
15 & 0 & 6 & 9  & 0 & 0 & 0 & 0 & 3 & 0 \\
9  & 6 & 3 & 3 & 9 & 0 & 0 &0  & 3 & 0\\
7  & 8  & 0 & 3 & 12  & 0 & 0 & 0 & 1 & 2\\
\hline
81 & 54 & 45 & 81 & 189 & 9 & 27 & 27 & 0 & 0\\
63 & 72 & 0 & 63 & 252 & 0 & 0 & 63 & 0 & 0
\end{array}
\right].\\[.01in]
\]

Indeed
the rows of $A_{r,s}$ all add up to $N(3)_{r,s}$ with
$N(3)_{1,0}=N(3)_{2,3}=3$, $N(3)_{2,0}=N(3)_{1,3}=15$,
$N(3)_{3,0}=N(3)_{0,3}=135$,
$N(3)_{3,1}=N(3)_{0,2}=315$, $N(3)_{3,2}=N(3)_{0,1}=63$,
and $N(3)_{0,0}=N(3)_{1,1}=N(3)_{2,2}=N(3)_{3,3}=0$ from Table
\ref{dodge2}.

\subsection{The little \texorpdfstring{\except{toc}{\boldmath{$[\ell]$}}\for{toc}{$[\ell]$}}{[\unichar{"2113}]}-isogeny graph 
\texorpdfstring{\except{toc}{\boldmath{$\gr_{\!g}([\ell], p)$}}\for{toc}{$\gr_{\!g}([\ell], p)$}}{gr\unichar{"5F}g([\unichar{"2113}],p)} and its subgraphs
\texorpdfstring{\except{toc}{\boldmath{$\gr_{\!g}([\ell],p)_{\overline{r},\overline{s}}$}}\for{toc}{$\gr_{\!g}([\ell], p)_{\overline{r},\overline{s}}$}}{gr\unichar{"5F}g([\unichar{"2113}],p)\unichar{"5F}r,s}}
\label{l2}

The {\sf little} $[\ell]$-isogeny graph $\gr_{\!g}([\ell] ,p)$
is the quotient of the enhanced $[\ell]$-isogeny
graph $\wgr_{\!\!g}([\ell],p) $ 
by the involution $\iota$ in Remark \ref{dirt}.
Let $\pi:\wgr_{\!\!g}([\ell],p)\rightarrow \gr_{\!g}([\ell],p)$ 
be the double covering.
In particular 
\begin{equation}
\label{rainy}
\Ver(\gr_{\!g}([\ell],p))=\Ver(\wgr_{\!\!g}([\ell],p))/\iota=
\osSgpl=
\coprod_{r=0}^{\lfloor g/2\rfloor} \sS_{\!g}(\ell,p)_{\overline{r}}
\end{equation}
using the notation \eqref{water}.
The vertices of $\gr_{\!g}([\ell],p)$
have  {\sf type} given by the
multiset $\overline{r}=\{r,\hat{r}\}$ for $0\leq r\leq \lfloor g/2\rfloor$.
For $[\A]\in\sS_{\!g}(\ell, p)$, let $[\overline{\A}]\colonequals
\{[\A],[\hat{\A}]\}$, viewed as a multiset.  Then
\[
\{[\overline{\A}]\mid [\A]\in\sS_{\!g}(\ell,p)\}=\overline{\sS}_{\!g}(\ell,p)=
\Ver(\gr_{\!g}([\ell],p)).
\]

For edges we have $\Ed(\gr_{\!g}([\ell],p))=\Ed(\wgr_{\!\!g}([\ell],p))/\iota$.
The edges
of $\gr_{\!g}([\ell],p)$ going from the vertex 
$[\overline{\A}_i]\in\sS_{\!g}(\ell,p)_{\overline{r}}$ to the 
vertex  $[\overline{\A}_j]\in\sS_{\!g}(\ell,p)_{\overline{s}}$
are
\[
\Ed(\gr_{\!g}([\ell],p))_{ij}=\iso_\ell(\A_i ,\A_j)\cup \iso_\ell(\A_i,\hat{\A}_j)
\cup \iso_\ell(\hat{\A}_i,\A_j)\cup \iso_\ell(\hat{\A}_i,\hat{\A}_j).
\]
Note that at most two of the above sets are nonempty.
If $e\in\Ed(\gr_\ell)_{ij}$, 
the opposite edge 
\[
\overline{e}\in
\Ed(\gr_{\!g}([\ell],p))_{ji}=\iso_\ell(\A_j,\A_i)\cup \iso_\ell(\A_j,\hat{\A}_i)
\cup \iso_\ell(\hat{\A}_j,\A_i)\cup \iso_\ell(\hat{\A}_j,\hat{\A}_i)
\]
is the equivalence class of the dual isogeny.

Define a weight function $\w$  on $\gr$ by descending the weight
function $\w$ on $\wgr_{\!\!g}([\ell],p)$: If 
$\tilde{e}\in\Ed(\wgr_{\!\!g}([\ell],p))$ with
$\pi(\tilde{e})=e\in\Ed(\gr_{\!g}([\ell],p))$, set $\w(e)=\w(\tilde{e})$, unless
$\iota(e)=e$, in which case  $\w(e)=2\w(\tilde{e})$.
Suppose $\tilde{v}\in\Ver(\wgr_{\!\!g}([\ell],p))$ and $\pi(\tilde{v})=v$.
If $\pi$ is unramified at $v$, set $\w(v)=\w(\tilde{v})$.
If $\pi$ is ramified at $v$, set $\w(v)=2\w(\tilde{v})$.

The weighted adjacency matrix $\Adw(\gr_{\!g}([\ell],p))$ is of
size $\overline{h}_{g}(\ell,p)\times \overline{h}_{g}(\ell,p)$.
It is a block matrix of the form
\begin{equation}
\label{rinsed}
\Adw(\gr_{\!g}([\ell],p))=\begin{bmatrix}
A_{\overline{0},\overline{0}} & A_{\overline{0},\overline{1}} &
\cdots & A_{\overline{0},\overline{\lfloor g/2 \rfloor}}\\
A_{\overline{1},\overline{0}} & A_{\overline{1},\overline{1}} &
\cdots & A_{\overline{1},\overline{\lfloor g/2 \rfloor}}\\
\vdots & \vdots & \cdots & \vdots\\
A_{\overline{\lfloor g/2 \rfloor},\overline{0}} & A_{\overline{\lfloor g/2 \rfloor},\overline{1}} &
\cdots & A_{\overline{\lfloor g/2 \rfloor},\overline{\lfloor g/2 \rfloor}}
\end{bmatrix}.
\end{equation}

The subgraph $\gr_{\!g}([\ell],p)_{\overline{r},\overline{s}}$ of
$\gr_{\!g}([\ell],p)$ is induced by the vertex subset
\[
\overline{\sS}_{\!g}(\ell,p)_{\overline{r}}\cup
\overline{\sS}_{\!g}(\ell,p)_{\overline{s}}\subseteq
\Ver(\gr_{\!g}([\ell],p))=\overline{\sS}_{\!g}(\ell, p).
\]
We have 
\begin{equation}
\#\Ver(\gr_{\!g}([\ell],p)_{\overline{r},\overline{s}})=\begin{cases}
\overline{h}_{g}(\ell,p)_{\overline{r}}+\overline{h}_{g}(\ell,p)_{\overline{s}}
\quad\text{if $\overline{r}\neq\overline{s}$},\\
\overline{h}_{g}(\ell,p)_{\overline{r}}\quad\text{if }\overline{r}=\overline{s}.
\end{cases}
\end{equation}

The weighted adjacency matrix
$\Adw(\gr_{\!g}([\ell],p)_{\overline{r},\overline{s}})$ 
is a
block matrix
\[
\Adw(\gr_{\!g}([\ell],p)_{\overline{r},\overline{s}})=
\begin{cases}
\begin{bmatrix}
A_{\overline{r},\overline{r}} & A_{\overline{r},\overline{s}} \\
A_{\overline{s},\overline{r}} & A_{\overline{s},\overline{s}}.
\end{bmatrix}\text{if $\overline{r}\neq\overline{s}$},\\[.2in]
\begin{bmatrix} A_{\overline{r},\overline{r}}\end{bmatrix}\text{ if $\overline{r}=
\overline{s}$}
\end{cases}
\]
of size $(h_g(\ell,p)_{\overline{r}} +h_g(\ell,p)_{\overline{s}})\times
(h_g(\ell,p)_{\overline{r}} +h_g(\ell,p)_{\overline{s}})$ if $\overline{r}\neq 
\overline{s}$, $h_g(\ell, p)_{\overline{r}}\times h_g(\ell , p)_{\overline{r}}$
if $\overline{r}=\overline{s}$.
We can give the blocks $A_{\overline{r},\overline{s}}$
in terms of the 
blocks $A_{r,s}$
as in \eqref{train}.  
Let us first suppose that we are in the generic case that neither
of $\overline{r}$, 
$\overline{s}$ is $\{g/2,\,g/2\}$. Then $h_g(\ell,p)_{\overline{r}}=
h_g(\ell,p)_r$ and $h_g(\ell,p)_{\overline{s}}=h_g(\ell,p)_s$.  We have
\begin{equation}
\label{pinto}
\begin{split}
A_{\overline{r},\overline{r}}&=A_{r,\hat{r}}\,,\\
A_{\overline{r},\overline{s}}&= A_{r,s}+A_{r,\hat{s}}\quad\text{if 
$\overline{r}\neq\overline{s}$}\, .
\end{split}
\end{equation}

It is convenient to establish notation for the vertices
$[\A]\in\sS_{\!g}(\ell, p)_{g/2}$ ramified, respectively \'{e}tale,
 in the double cover
$\sS_{\!g}(\ell,p)_{g/2}\rightarrow 
\sS_{\!g}(\ell,p)_{\overline{g/2}}$ which is quotienting by the involution
$\iota$. 
Set
\begin{equation}
\begin{split}
\Ram_{\! g}(\ell,p)_{g/2}&=
\{[\A]\in\sS_{\!g}(\ell, p)_{g/2}\mid [\A]=[\hat{\A}]\}\\
\Et_{\! g}(\ell,p)_{g/2} &=\sS_{\!g}(\ell,p)_{g/2}
\setminus\Ram_{\!g}(\ell,p)_{g/2}=
\{[\A]\in\sS_{\!g}(\ell,p)_{g/2}\mid [\A]\neq [\hat{\A}]\}
\end{split}
\end{equation}
and put
\begin{equation}
r_g(\ell, p)_{g/2} =\#\Ram_{\!g}(\ell,p)_{g/2}\quad\text{and}\quad
e_g(\ell,p)_{g/2}=\# \Et_{\!g}(\ell,p)_{g/2}.
\end{equation}
Then 
\begin{equation}
\label{puppy}
\begin{split}
r_g(\ell,p)_{g/2}+e_g(\ell,p)_{g/2}& =h_g(\ell,p)_{g/2}, \\
r_g(\ell,p)_{g/2}+ (1/2)e_g(\ell,p)_{g/2} &=
h_g(\ell,p)_{\overline{g/2}}
\end{split}
\end{equation}
Suppose  $\overline{r}=\{g/2, g/2\}$ and $\overline{s}\neq\overline{r}$.
\begin{enumerate}[\upshape (a)]
\label{put}
\item
\label{put1}
$A_{\overline{r},\overline{r}}$ is the matrix $\zm$ of size
 $h_g(\ell, p)_{\overline{g/2}}\times
h_g(\ell, p)_{\overline{g/2}}$.
\item
\label{put2}
Suppose $[\overline{\A}]\in\sS_{\!g}(\ell, p)_{\overline{g/2}}$.
\begin{enumerate}[\upshape (i)]
\item
If $[\A]\in \Ram_{\!g}(\ell,p)_{g/2}$, then the $[\overline{\A}]$-row
of $A_{\overline{r},\overline{s}}$ is double the $[\A]$-row of $A_{r,s}$.
\item
If $[\A]\in \Et_{\! g}(\ell,p)_{g/2}$, then the $[\overline{\A}]$-row
of $A_{\overline{r},\overline{s}}$ is the sum of 
the $[\A]$-row of $A_{r,s}$ and the $[\hat{\A}]$-row of $A_{r,s}$.
\end{enumerate}
\item
\label{put3}
Suppose $[\overline{\A}]\in\sS_{\!g}(\ell, p)_{\overline{g/2}}$.
\begin{enumerate}[\upshape (i)]
\item
If $[\A]\in \Ram_{\!g}(\ell,p)_{g/2}$, then the $[\overline{\A}]$-column
of $A_{\overline{s},\overline{r}}$ is the $[\A]$-column of $A_{s,r}$.
\item
If $[\A]\in \Et_{\! g}(\ell,p)_{g/2}$, then the $[\overline{\A}]$-column
of $A_{\overline{s},\overline{r}}$ is the sum of 
the $[\A]$-column of $A_{s,r}$ and the $[\hat{\A}]$-column of $A_{s,r}$.
\end{enumerate}
\end{enumerate}

\subsection{The big \texorpdfstring{\except{toc}{\boldmath{$[\ell]$}}\for{toc}{$[\ell]$}}{[\unichar{"2113}]}-isogeny graph
\texorpdfstring{\except{toc}{\boldmath{$\Gr_{\!g}([\ell], p)$}}\for{toc}{$\Gr_{\!g}([\ell], p)$}}{Gr\unichar{"5F}g([\unichar{"2113}],p)} and its subgraphs 
\texorpdfstring{\except{toc}{\boldmath{$\Gr_{\!g}([\ell],p)_{\overline{r},\overline{s}}$}}\for{toc}{$\Gr_{\!g}([\ell], p)_{\overline{r},\overline{s}}$}}{Gr\unichar{"5F}g([\unichar{"2113}],p)\unichar{"5F}r,s}}
\label{whopper}

Retain the notation of Section \ref{l2}.
The vertices of the big $[\ell]$-isogeny graph $\Gr_{\!g}([\ell],p)$
are
\begin{equation}
\label{cabbage}
\Ver(\Gr_{\!g}([\ell],p))=\Ver(\gr_{\!g}([\ell],p))=
\osSgpl=
\coprod_{r=0}^{\lfloor g/2\rfloor} \sS_{\!g}(\ell,p)_{\overline{r}}.
\end{equation}
The edges of $\Gr_{\!g}([\ell],p)$ going from the vertex
$[\overline{\A}_i]\in\sS_{\!g}(\ell,p)_{\overline{r}}$ to the 
vertex  $[\overline{\A}_j]\in\sS_{\!g}(\ell,p)_{\overline{s}}$
are
\[
\Ed(\Gr_{\!g}([\ell],p))_{ij}=\Iso_\ell(\A_i ,\A_j)\cup \Iso_\ell(\A_i,\hat{\A}_j)
\cup \Iso_\ell(\hat{\A}_i,\A_j)\cup \Iso_\ell(\hat{\A}_i,\hat{\A}_j)
\]
with definitions as in Definition \ref{monk}\eqref{monk1} and Remark \ref{blue}.
Note that at most two of the above sets are nonempty.

The adjacency matrix $\Ad(\Gr_{\!g}([\ell],p))$ is of size 
$\overline{h}_{g}(\ell,p)\times \overline{h}_{g}(\ell,p)$.
It is a block matrix of the form
\begin{equation}
\label{rinsed2}
\Ad(\Gr_{\!g}([\ell],p))=\begin{bmatrix}
{\sf A}_{\overline{0},\overline{0}} & {\sf A}_{\overline{0},\overline{1}} &
\cdots & {\sf A}_{\overline{0},\overline{\lfloor g/2 \rfloor}}\\
{\sf A}_{\overline{1},\overline{0}} & {\sf A}_{\overline{1},\overline{1}} &
\cdots & {\sf A}_{\overline{1},\overline{\lfloor g/2 \rfloor}}\\
\vdots & \vdots & \cdots & \vdots\\
{\sf A}_{\overline{\lfloor g/2 \rfloor},\overline{0}} & 
{\sf A}_{\overline{\lfloor g/2 \rfloor},\overline{1}} &
\cdots & {\sf A}_{\overline{\lfloor g/2 \rfloor},\overline{\lfloor g/2 \rfloor}}
\end{bmatrix}.
\end{equation}

The subgraph $\Gr_{\!g}([\ell],p)_{\overline{r},\overline{s}}$ of
$\Gr_{\!g}([\ell],p)$ is induced by the vertex subset
\[
\overline{\sS}_{\!g}(\ell,p)_{\overline{r}}\cup
\overline{\sS}_{\!g}(\ell,p)_{\overline{s}}\subseteq
\Ver(\gr_{\!g}([\ell],p))=\overline{\sS}_{\!g}(\ell, p).
\]
The adjacency matrix $\Ad(\Gr_{\!g}([\ell],p)_{\overline{r},\overline{s}})$ 
is a block matrix
\[
\Ad(\Gr_{\!g}([\ell],p)_{\overline{r},\overline{s}})=
\begin{cases}
\begin{bmatrix}
{\sf A}_{\overline{r},\overline{r}} & {\sf A}_{\overline{r},\overline{s}} \\
{\sf A}_{\overline{s},\overline{r}} & {\sf A}_{\overline{s},\overline{s}}.
\end{bmatrix}\text{if $\overline{r}\neq\overline{s}$},\\[.2in]
\begin{bmatrix} {\sf A}_{\overline{r},\overline{r}}\end{bmatrix}\text{ if $\overline{r}=
\overline{s}$}.
\end{cases}
\]
For $r\neq g/2$ set $\Gr_{\!g}([\ell],p)_r=\Gr_{\!g}([\ell],p)_{\overline{r},
\overline{r}}$.  The graph $\Gr_{\!g}([\ell],p)_r$ is a regular graph
with valence $N(\ell)_{r,\hat{r}}$ if $r>\hat{r}$ and
$N(\ell)_{\hat{r},r}$ if $\hat{r}>r$. In case $r=0$, the graph
$\Gr_{\!g}([\ell],p)_0=\Gr_{\!g}([\ell],p)_g$ is the big isogeny
graph $\Gr_{\!g}(\ell,p)$ of \cite[Sect.~7.1]{jz}.
\begin{theorem}
\label{storm}
Let $A_{\overline{r},\overline{s}}$ be as in \eqref{rinsed}
and ${\sf A}_{\overline{r},\overline{s}}$ be as in \eqref{rinsed2}.
Then $A_{\overline{r},\overline{s}}= {\sf A}_{\overline{r},\overline{s}}$.
In particular, 
$\Ad(\Gr_{\!g}([\ell],p))=\Adw(\gr_{\!g}([\ell],p))$ and
$\Ad(\Gr_{\!g}([\ell],p)_{\overline{r},\overline{s}}=
\Adw(\gr_{\!g}([\ell],p)_{\overline{r},\overline{s}}$.
\end{theorem}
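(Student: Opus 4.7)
The plan is to establish the equality of matrices entry by entry. Fix $[\overline{\A}_i]\in\sS_g(\ell,p)_{\overline{r}}$ and $[\overline{\A}_j]\in\sS_g(\ell,p)_{\overline{s}}$; the claim reduces to
$$
\#\Ed(\Gr_{\!g}([\ell],p))_{ij}\;=\;\sum_{e\in\Ed(\gr_{\!g}([\ell],p))_{ij}}\frac{\w([\overline{\A}_i])}{\w(e)}.
$$

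The central ingredient is an orbit-stabilizer argument comparing the two equivalence relations $\sim_b$ and $\sim_l$ of Definition \ref{monk}. For a strict isogeny $f:\A\to\A'$ of $[\ell]$-polarized superspecial abelian varieties, the $\sim_l$ orbit $\Aut(\A')\,f\,\Aut(\A)$ decomposes into $\sim_b$ orbits $\Aut(\A')\,f\,\alpha$ as $\alpha$ runs over $\Aut(\A)$. Since $f$ is an epimorphism, two such orbits coincide exactly when $\alpha \alpha'^{-1}$ lies in the isogeny-automorphism group $\Aut(f:\A\to\A'):=\{\beta\in\Aut(\A):f\beta\in\Aut(\A')\,f\}$, which is well-defined on the $\sim_l$-class up to conjugation. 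Hence
$$
\#\Iso_\ell(\A,\A')\;=\;\sum_{[f]_l\in\iso_\ell(\A,\A')}\frac{\#\Aut(\A)}{\#\Aut(f)}. \qquad(\ast)
$$

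Applying $(\ast)$ to each of the four pairs $(\A_i,\A_j)$, $(\A_i,\hat{\A}_j)$, $(\hat{\A}_i,\A_j)$, $(\hat{\A}_i,\hat{\A}_j)$ handles the generic case, in which the quotient map $\pi:\wgr_{\!g}([\ell],p)\to\gr_{\!g}([\ell],p)$ is unramified at both vertices and at every edge between them. There the four $\Iso_\ell$ sets are disjoint, $\w([\overline{\A}_i])=\#\Aut(\A_i)=\#\Aut(\hat{\A}_i)$, and $\w(e)=\#\Aut(f)$ for every representative $f$ of $e$, so summing $(\ast)$ across the four pairs yields the identity.

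The main obstacle will be the book-keeping of factor-of-two corrections in the ramified cases. When $[\A_i]=[\hat{\A}_i]$, the pairs $(\A_i,\A_j)$ and $(\hat{\A}_i,\A_j)$ coincide (and similarly the $\hat{\A}_j$ pairs), so the union defining $\Ed(\Gr_{\!g}([\ell],p))_{ij}$ collapses; correspondingly, the definition $\w([\overline{\A}_i])=2\#\Aut(\A_i)$ on the little-graph side doubles the numerator on the right-hand side by the exact amount needed. The same analysis applies at $[\A_j]$, and the doubling $\w(e)=2\w(\tilde e)$ for edges $e$ with $\iota(e)=e$ (which occur only when $\A'\cong\hat{\A}$ and which project to loops of $\gr_{\!g}([\ell],p)$) matches the halving of $\sim_b$-classes coming from a self-dual $\sim_l$-class. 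Patiently pairing each weight-doubling against the corresponding coincidence in the four-fold union reduces the equality, in every subcase, to $(\ast)$. The observation noted in the paper that at most two of the four $\Iso_\ell$ sets are nonempty for given $(\overline r,\overline s)$ keeps the case analysis bounded.
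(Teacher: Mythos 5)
The paper does not actually include a proof of Theorem~\ref{storm}, so there is nothing to compare against directly; I can only assess the proposal on its own terms. Your basic plan is the right one: the identity is entry-by-entry, and the core lemma is the orbit--stabilizer formula $(\ast)$ (that is, $\Aut(\A)$ acts on $\Iso_\ell(\A,\A')$ by pre-composition, the orbits are precisely the $\sim_l$-classes, and the stabilizer of $[f]_b$ is $\Aut(f)$), which combined with Definition~\ref{weight1}\eqref{wad} and Remark~\ref{dirt} handles the unramified entries cleanly. That part is solid.

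The ramified-case bookkeeping, however, contains a genuine error, and the error is precisely the source/target asymmetry that you treat as a symmetry. By Remark~\ref{blue}, $\Iso_\ell(\A,\A')$ is identified with the set of $[\ell]$-subgroups $C\subseteq\A$ with $\A/C\cong\A'$. This set depends on $\A$ \emph{as an object} (the subgroups live inside $\A$) but only on the \emph{isomorphism class} of $\A'$. Consequently, when $[\A_j]=[\hat{\A}_j]$ the sets $\Iso_\ell(\A_i,\A_j)$ and $\Iso_\ell(\A_i,\hat{\A}_j)$ are literally equal and the union collapses; but when $[\A_i]=[\hat{\A}_i]$ the sets $\Iso_\ell(\A_i,\A_j)$ and $\Iso_\ell(\hat{\A}_i,\A_j)$ are \emph{disjoint} (they are subsets of $\A_i[\ell]$ and of $\hat{\A}_i[\ell]$, two distinct abelian varieties), so no collapse occurs. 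Your claim that ``the pairs $(\A_i,\A_j)$ and $(\hat{\A}_i,\A_j)$ coincide \dots\ so the union defining $\Ed(\Gr_{\!g}([\ell],p))_{ij}$ collapses'' is therefore false, and the subsequent assertion that the weight doubling $\w([\overline{\A}_i])=2\#\Aut(\A_i)$ compensates cannot be right as stated: a collapse divides the left side by $2$ while the weight multiplies the right side by $2$; those effects compound rather than cancel, and no sub-case saves the count. The correct accounting is the opposite: at a ramified source vertex the two $\Iso_\ell$ sets on the big-graph side contribute additively (a genuine factor of $2$ in $\#\Ed(\Gr_{\!g}([\ell],p))_{ij}$), while on the little-graph side there is only one $\iso_\ell$ set (as $\Ed(\gr)=\Ed(\wgr)/\iota$ shows) and the factor of $2$ comes from the doubled vertex weight --- both sides pick up the same factor. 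This is exactly what items \eqref{put2} and \eqref{put3} after \eqref{puppy} encode: ramified rows are doubled, ramified columns are not, and the doubled row can be checked against the displayed $\Adw(\gr_{\!2}([2],11))$. Your proposal needs to be rewritten so that the collapse is attributed to target ramification only and the source-ramification factor of $2$ is traced to the disjointness of the two $\Iso_\ell$ sets, not to the vertex weight. The half-edge case ($\iota(\tilde e)=\bar{\tilde e}$, edges projecting to loops of $\gr_{\!g}([\ell],p)$) still requires its own argument, but the structure you propose there --- matching $\w(e)=2\w(\tilde e)$ against the merging of a pair $\{[f]_b,[\hat f]_b\}$ --- is the right sort of thing.
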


\subsection{Two examples of \texorpdfstring{\except{toc}{\boldmath{$\Adw(\gr_{\!g}([\ell],p))=
\Ad(\Gr_{\!g}([\ell],p))$}}\for{toc}{$\Adw(\gr_{\!g}([\ell],p))=
\Ad(\Gr_{\!g}([\ell],p))$}}{Ad\unichar{"5F}w(gr\unichar{"5F}g([\unichar{"2113}],p))=Ad(Gr\unichar{"5F}g([\unichar{"2113}],p))}}
\label{exam1}
In this section we give the weighted adjacency matrix for the
little isogeny graphs arising from the examples given in Section
\ref{ex}. In particular, we have to apply the discussion
in Section \ref{l2} to calculate the $A_{\overline{r},\overline{s}}$
from the $A_{r,s}$ given in Sections \ref{ex2} and \ref{dented}.

\subsubsection{\texorpdfstring{$\Adw(\gr_{\!2}([2],11))=\Ad(\Gr_{\!2}([2],11))$}{Ad\unichar{"5F}w(gr\unichar{"5F}2([2],11))=Ad(Gr\unichar{"5F}2([2],11))}}
From Table \ref{dodge1}, $h_2(2,11)_{\overline{0}}=3$ and 
$h_2(2,11)_{\overline{1}}=8$.  We have
$e_2(2,11)_1=4$ and $r_2(2,11)_1=6$; observe that the
relations \eqref{puppy} are satisfied.
In the matrix $\Adw(\wgr_2([2],11))$ given in 
Section \ref{dented}, the block $A_{0,1}$ has its first six
columns corresponding to $[\A]\in\sS_{\!2}(2,11)_1$ with $[\hat{\A}]=[\A]$.
The $[\A]\in\sS_{\!2}(2,11)_1$ corresponding to Column 7 in $A_{0,1}$
has $[\hat{\A}]\in\sS_{\!2}(2,11)_1$ corresponding to Column 9.  Similarly
the $[\A]$ corresponding to Column 8 in $A_{0,1}$ has
$[\hat{\A}]$ corresponding to Column 10. There is the analogous
situation in the rows of $A_{1,0}$: Rows 1-6 each correspond to a vertex
fixed by $\iota$, the vertex of Row 7 is mapped under $\iota$ to the
vertex of Row 9, and the vertex of Row 8 is mapped under $\iota$ to
the vertex of Row 10.  This information is sufficient to calculate
$\Adw(\gr_2([2],11))$ using the recipe in Section \ref{l2}.
Let $A=\Ad(\Gr_2([2],11))=\Adw(\gr_2([2],11))=[A_{\overline{r},\overline{s}}]_{0\leq r,s\leq 1}$, cf. Theorem \ref{storm}.
Then\\[.1in]
\begin{equation}
A=\begin{bmatrix} A_{\overline{0},\overline{0}} &
A_{\overline{0},\overline{1}}\\
 A_{\overline{1},\overline{0}} &
A_{\overline{1},\overline{1}}
\end{bmatrix} =
\left[
\begin{array}{ccccc|cccccccc}
3& 4& 4& 4& 0& 2& 4& 1& 0& 0& 0& 4& 4\\
9& 3& 3& 0& 0& 0& 9& 0& 0& 0& 0& 6& 0\\
3& 1& 3& 0& 8& 0& 3& 0& 6& 0& 0& 0& 6\\
3& 0& 0& 6& 6& 3& 0& 0& 0& 2& 6& 1& 3\\
0& 0& 4& 3& 8& 0& 0& 0& 6& 0& 6& 0& 3\\
\hline
2& 0& 0& 4& 0& 0& 0& 0& 0& 0& 0& 0& 0\\
2& 2& 2& 0& 0& 0& 0& 0& 0& 0& 0& 0& 0\\
6& 0& 0& 0& 0& 0& 0& 0& 0& 0& 0& 0& 0\\
0& 0& 2& 0& 4& 0& 0& 0& 0& 0& 0& 0& 0\\
0& 0& 0& 6& 0& 0& 0& 0& 0& 0& 0& 0& 0\\
0& 0& 0& 2& 4& 0& 0& 0& 0& 0& 0& 0& 0\\
3& 2& 0& 1& 0& 0& 0& 0& 0& 0& 0& 0& 0\\
1& 0& 2& 1& 2& 0& 0& 0& 0& 0& 0& 0& 0
\end{array}
\right].
\end{equation}

\subsubsection{\texorpdfstring{$\Adw(\gr_{\!3}([2],3))=\Ad(\Gr_{\!3}([2],3))$}{Ad\unichar{"5F}w(gr\unichar{"5F}3([2],3))=Ad(Gr\unichar{"5F}3([2],3))}}
From Table \ref{dodge1}, $h_{3}(2,3)_{\overline{0}}= 2  $
and $h_{3}(2,3)_{\overline{1}}=3$.
Let $A=\Ad(\Gr_{\!3}([2],3))=\Adw(\gr_{\!3}([2],3))=[A_{\overline{r},\overline{s}}]_{0\leq r,s\leq 1}$, see Theorem \ref{storm}.
Then\\[.1in]
\begin{equation}
A=\begin{bmatrix} A_{\overline{0},\overline{0}} &
A_{\overline{0},\overline{1}}\\
 A_{\overline{1},\overline{0}} &
A_{\overline{1},\overline{1}}
\end{bmatrix} =
\left[
\begin{array}{c c | c c c}
81 & 54 & 54 & 108 & 216\\
63 & 72 & 0 & 63 &  315\\
\hline
18 & 0 & 6 & 9 & 0 \\
12 & 6 & 3 & 3 & 9\\
8 & 10 & 0 & 3 & 12\end{array}
\right].
\end{equation}

\section{First results on the graphs \texorpdfstring{$\wgr_{\!g}([\ell],p)$}{tilde gr\unichar{"5F}g([\unichar{"2113}],p)},
\texorpdfstring{$\gr_g([\ell],p)$}{gr\unichar{"5F}g([\unichar{"2113}],p)},
and
\texorpdfstring{$\Gr_g([\ell],p)$}{Gr\unichar{"5F}g([\unichar{"2113}],p)}}
\label{dung}
For $r\neq g/2$
let $\wgr_{\!g}([\ell],p)_r\colonequals \wgr_{\!g}([\ell],p)_{r,\hat{r}}$
and $\gr_{\!g}([\ell],p)_{\overline{r}}\colonequals \gr_{\!g}([\ell],p)_{
\overline{r},\overline{r}}$.

\subsection{Connectedness results}
We proved in \cite[Sect.~8]{jz} that the graphs
$\wgr_{\!g}([\ell],p)_0$ and $\gr_{\!g}([\ell],p)_{\overline{0}}$ are connected.
Additionally we proved that $\gr_{\!g}([\ell],p)_{\overline{0}}$ is not
bipartite.  The analogous statements are true for our
more general $[\ell]$-isogeny graphs.
\label{shiny}
\begin{theorem}
\label{ghostly}
\textup{1.} The graphs $\wgr_{\!g}([\ell],p)$ and $\gr_{\!g}([\ell],p)$
are connected.\\
\textup{2.} If $r\neq s$, the graph $\wgr_{\!g}([\ell],p)_{r,s}$
is connected.  If $(\overline{r},\overline{s})\neq (\overline{g/2},
\overline{g/2})$,
the graph
$\gr_{\!g}([\ell],p)_{\overline{r},\overline{s}}$ is connected.
\end{theorem}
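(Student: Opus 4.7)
The plan is to build on \cite[Sect.~8]{jz}, which proves that $\wgr_g([\ell],p)_{0,g}$ is connected, and leverage this base case. For Part 1, I would show that every vertex of intermediate type $0<r<g$ is joined by an edge to some vertex of type $0$. Given $\A=(A,\lambda)$ of type $r$, the kernel $\ker(\lambda)\subset A[\ell]$ is a rank-$2r$ symplectic $\F_\ell$-space under the Weil pairing \eqref{Weil}, so it contains an isotropic subgroup $C$ of order $\ell^r$. By Definition \ref{poll}(b), $C$ is an $[\ell]$-subgroup of $\A$, and Proposition \ref{hawk} produces a strict isogeny $f_C\colon\A\to\A/C$ with $\A/C$ of type $0$; Proposition \ref{beaten} certifies that $\A/C$ is again superspecial. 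This gives an edge in $\wgr_g([\ell],p)$ from $[\A]$ into the connected subgraph $\wgr_g([\ell],p)_{0,g}$, so $\wgr_g([\ell],p)$ is connected. The little graph $\gr_g([\ell],p)=\wgr_g([\ell],p)/\iota$ is then connected as the quotient of a connected graph by an involution.

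For Part 2, the involution $\iota$ gives isomorphisms $\wgr_g([\ell],p)_{r,s}\cong\wgr_g([\ell],p)_{\hat r,\hat s}$, so I may assume $r>s$. The same argument as in Part 1, now applied with an isotropic subgroup of order $\ell^{r-s}$ in the rank-$2r$ kernel of a type-$r$ polarization, shows that every type-$r$ vertex has an edge in $\wgr_g([\ell],p)_{r,s}$ to some type-$s$ vertex. It therefore suffices to prove that the type-$s$ vertices are mutually connected inside $\wgr_g([\ell],p)_{r,s}$. My approach is to refine the base-case path: after dualizing the endpoints to type-$0$ vertices $[\A]$, $[\A']$ joined by a $\wgr_g([\ell],p)_{0,g}$-path through type-$g$ intermediaries, I would split each $(\ell)^g$-isogeny $\hat\A_i\to\hat\A_i/C_i$ in the path through an intermediate quotient $\hat\A_i/D_i$ with $D_i\subset C_i$ an isotropic subgroup of appropriate order, then chain these refinements with additional type-$s$-to-type-$r$-to-type-$s$ moves so the resulting path stays inside $\wgr_g([\ell],p)_{r,s}$. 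The statement for $\gr_g([\ell],p)_{\overline{r},\overline{s}}$ follows by descent through $\pi$; the excluded case $(\overline{r},\overline{s})=(\overline{g/2},\overline{g/2})$ is precisely the one where $\wgr_g([\ell],p)_{g/2,g/2}$ has no edges at all (since $A_{r,r}=\zm$), so the quotient subgraph would be edgeless.

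The main obstacle is that the naive factoring of an edge of $\wgr_g([\ell],p)_{0,g}$ through a type-$r$ vertex $[X]$ produces a length-$2$ path whose second edge lies in $\wgr_g([\ell],p)_{r,g}$ rather than in the desired bipartite subgraph $\wgr_g([\ell],p)_{r,s}$. The real work is to arrange the refinements so that every intermediate vertex has type in $\{r,s\}$ and that enough type-$s$ vertices are reachable to span the whole stratum. This amounts to establishing a transitivity property for the Hecke correspondence obtained by composing a type-$s\to r$ isogeny with a type-$r\to s$ isogeny on $\sS_g(\ell,p)_s$, and is cleanest via the double-coset description of $\sS_g(\ell,p)_s$ as orbits of a quaternionic unitary group acting on a Bruhat-Tits building, together with strong approximation at $\ell$---the same tools that power the proof of the base case in \cite[Sect.~8]{jz}.
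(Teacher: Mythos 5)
Your Part 1 matches the paper's proof exactly: use Proposition \ref{hawk} to find an $[\ell]$-isogeny from any $[\A]$ of intermediate type to a type-$0$ vertex, then cite \cite[Thm.~43]{jz} for the connectedness of $\wgr_{\!g}([\ell],p)_{0,g}$, and pass to the quotient for the little graph.

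For Part 2 you have the right general strategy---refine paths in the base graph $\wgr_{\!g}([\ell],p)_{g,0}$ to paths in $\wgr_{\!g}([\ell],p)_{r,s}$---but you correctly identify the crux (``the real work is to arrange the refinements'') and then do not supply it; appealing to double cosets and strong approximation at $\ell$ is both heavier and left entirely unexecuted. The paper's route is elementary and local: it first proves a purely combinatorial lemma about the symplectic $\F_\ell$-space $\ker(\lambda)$, namely that the inclusion-graph on isotropic subspaces of any two fixed ranks $r<s$ is connected (proved by an exchange argument that increases $\dim(V\cap V')$ step by step). With that lemma in hand, the refinement you want falls out locally at each type-$g$ vertex $[\A]$ of a $\wgr_{\!g}([\ell],p)_{g,0}$-path: its type-$0$ neighbors $[\A']$, $[\A'']$ correspond to rank-$g$ isotropic subgroups of $\ker(\lambda)$, its type-$r$ neighbors to rank-$(g-r)$ ones, and inclusions of isotropic subgroups produce edges in $\wgr_{\!g}([\ell],p)_{r,0}$ via Proposition \ref{hawk}. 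So the lemma directly replaces each $2$-step path $[\A']-[\A]-[\A'']$ by a path in $\wgr_{\!g}([\ell],p)_{r,0}$; applying the involution $\iota$ then handles $\wgr_{\!g}([\ell],p)_{g,s}$, and iterating the same local-replacement argument gives $\wgr_{\!g}([\ell],p)_{r,s}$ for general $r\neq s$. The missing ingredient in your proposal is precisely that combinatorial lemma and the observation that it can be applied locally at each type-$g$ vertex; without it the ``refinement'' step is a gap.
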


We will need the following lemma.
\begin{lemma}
Consider the lattice of isotropic subgroups of a symplectic space of
dimension $2g$.  Now consider the subgraph consisting of subspaces of
dimension $r$ and $s$ with $0\le r<s\le g$, i.e., there is an edge
between two spaces if one is contained in the other.  That subgraph is
connected.
\end{lemma}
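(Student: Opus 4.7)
The plan is to reduce the lemma to the existence of a particular chain of isotropic $s$-subspaces and then construct that chain by an elementary moves argument. First I would note that every isotropic $r$-subspace is contained in at least one isotropic $s$-subspace: extend it to a maximal isotropic subspace of dimension $g$ and then cut down to dimension $s$, using $r<s\le g$. Hence no $r$-vertex is isolated in the bipartite subgraph, and connectedness of the subgraph is equivalent to the claim that any two isotropic $s$-subspaces $W,W'$ can be joined by a chain $W=W_0,W_1,\ldots,W_n=W'$ of isotropic $s$-subspaces with $\dim(W_i\cap W_{i+1})\ge r$ for each $i$. Indeed, any $r$-dimensional subspace $T\subseteq W_i\cap W_{i+1}$ is automatically isotropic (being contained in isotropic $W_i$) and realizes both edges $T\subset W_i$ and $T\subset W_{i+1}$.

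Next I would establish the stronger claim that such a chain can be found with $\dim(W_i\cap W_{i+1})\ge s-1$, which is sufficient since $r\le s-1$. Proceed by induction on $s-d$, where $d=\dim(W\cap W')$. If $d=s$, then $W=W'$ and the chain is trivial. Otherwise, set $U=W\cap W'$ and pick any $w\in W'\setminus W$. Two orthogonality facts are immediate: $U\perp w$ because $U$ and $w$ both lie in the isotropic subspace $W'$, and $w^\perp\cap W$ has codimension at most $1$ in $W$, hence dimension at least $s-1$, with $U\subseteq w^\perp\cap W$ by the previous observation. Choose a complement $U'$ of $U$ inside $w^\perp\cap W$ of dimension $s-d-1$; the bound $\dim(w^\perp\cap W)-d\ge s-1-d$ shows this is possible. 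Set $W''=U+\langle w\rangle+U'$.

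The subspace $W''$ has dimension $s$ (the sum is direct because $w\notin W$ and $U+U'\subseteq W$), and it is isotropic because $U+U'\subseteq W$ is isotropic, $U+\langle w\rangle\subseteq W'$ is isotropic, and $U'\perp w$ by construction. By design, $W\cap W''\supseteq U+U'$ has dimension at least $s-1\ge r$, while $W''\cap W'\supseteq U+\langle w\rangle$ has dimension at least $d+1$. Thus $W''$ is a neighbor of $W$ in the bipartite graph (via any $r$-subspace of $W\cap W''$), and the induction hypothesis applied to the pair $(W'',W')$ produces a chain from $W''$ to $W'$. The main subtlety in the argument is checking the dimension bound on $w^\perp\cap W$ and using it to carve out enough room for the complement $U'$; once this bookkeeping is in place, the induction runs cleanly and the connectedness of the bipartite subgraph follows.
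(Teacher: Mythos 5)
Your proof is correct and follows essentially the same strategy as the paper's: pick one vector $w$ in one of the two $s$-dimensional isotropic subspaces, intersect $w^\perp$ with the other to carve out an $(s-1)$-dimensional isotropic piece, and span with $w$ to get an intermediate $s$-dimensional isotropic subspace whose intersections with the two given ones both exceed the starting bound. The only cosmetic differences are that you phrase the step as induction on $s-\dim(W\cap W')$ where the paper uses the equivalent extremal formulation (take a disconnected pair with maximal intersection dimension and derive a contradiction), and you build the new subspace ``near $W$'' rather than ``near $W'$'' --- the same construction up to swapping the roles of the two subspaces.
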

\begin{proof}
Let $V$ and $V'$ be any two isotropic subspaces of dimension $s$, it
suffices to prove there is a path between them, so suppose that there
isn't.  Let $W=V\cap V'$ with $t=\dim W$ and suppose $t$ is maximal
among all $s$-dimensional subspaces that don't have a path between
them.  We must clearly have $t<r\le s-1$.

Now let $x\in V\setminus W$ and consider $U=x^\perp\cap V'$.  Since
$V$ is isotropic, $W\subset U$.  And $\dim U$ is either $s$ or $s+1$,
in the latter case replace $U$ by an $s$-dimensional subset containing
$W$.  Set $V''=U\oplus\langle x\rangle$.  Now $\dim V\cap V''=t+1$ and
$\dim V'\cap V''=s-1>t$, so by assumption $V''$ is connected to both
$V$ and $V'$ and we are done.
\end{proof}

\begin{proof}[Proof of Theorem \ref{ghostly}]
Part 1 follows trivially from \cite{jz}[Theorem 43] since by
Proposition \ref{hawk} every superspecial abelian variety with an
$[\ell]$-polarization has an isogeny to a type $0$ abelian variety.

We will prove part 2 is stages.

The connectedness of $\wgr_{\!g}([\ell],p)_{g,0}$ is just \cite{jz}[Theorem 43].

Now to prove the connectedness of $\wgr_{\!g}([\ell],p)_{r,0}$ it
suffices to show that if $[\A']-[\A]-[\A'']$ is a subgraph of
$\wgr_{\!g}([\ell],p)_{g,0}$ with $\A$ of type $g$ and $\A'$, $\A''$ of
type $0$ we can finding a path in $\wgr_{\!g}([\ell],p)_{r,0}$ between
$[\A']$ and $[\A'']$.  Let $\A=(A,\lambda)$, by Proposition \ref{hawk} $\A'$
and $\A''$ correspond to isotropic rank-$g$ subgroups of
$\ker(\lambda)$ and rank-$(g-r)$ subgroups of $\ker(\lambda)$ give type
$r$ vertices of $\wgr_{\!g}([\ell],p)_{r,0}$.  Since the graph of
isotropic subspaces of $\ker(\lambda)$ of ranks $g$  and $g-r$
is connected, we have a path
from $[\A']$ to $[\A'']$ in $\wgr_{\!g}([\ell],p)_{r,0}$.

Next apply the involution we see that the graphs
$\wgr_{\!g}([\ell],p)_{g,s}$ are connected.  And by the same logic as
above we get the connectedness of $\wgr_{\!g}([\ell],p)_{r,s}$ for
general $r$ and $s$.

Finally, $\gr_{\!g}([\ell],p)_{\overline{r},\overline{s}}$ is
connected since it's the quotient of the connected graph 
$\wgr_{\!g}([\ell],p)_{r,s}$.
\end{proof}

\subsection{The Ramanujan condition}
\label{ghost}
Let $A=\Adw(\wgr_{\!g}([\ell],p))$; it is a block matrix with
$A=[A_{r,s}]_{0\leq r,s\leq g}$.  The matrices $A_{r,\hat{r}}$ are symmetric
with constant row sum equal to  $N(\ell)_{r,\hat{r}}$ if $r>\hat{r}$,
$N(\ell)_{\hat{r},r}$ if $\hat{r}>r$, and $0$ if $r=\hat{r}$.
Let $\Gr_{\!g}([\ell],p)_{r}$ be the regular graph with adjacency matrix
$A_{r,\hat{r}}$ for $r\ne g/2$.
The graph $\Gr_{\!g}([\ell],p)_0=\Gr_{\!g}([\ell],p)_g$ 
is the big isogeny graph $\Gr_{\!g}(\ell,p)$
for principally polarized superspecial abelian varieties
defined in \cite[Sect.~7.1]{jz}.
We saw in \cite[Sect.~10]{jz} that this graph is sometimes, but rarely,
Ramanujan and usually non-Ramanujan.  We get the same behavior
for the graphs $\Gr_{\!g}([\ell],p)_r$ for $0<r<g$.  We give a
Ramanujan and a non-Ramanujan example below.
\subsubsection{A non-Ramanujan example\textup{:} \texorpdfstring{$\Gr_{\!3}([3],2)_1$}{Gr\unichar{"5F}3([3],2)\unichar{"5F}1}}
\label{gravel}
We have
\[
\Ad(\Gr_{\!3}([3],2)_1)=\Adw(\wgr_{\!3}([3],2)_{1,2})=
\begin{bmatrix}
8 & 32 & 0\\
4 & 4 & 32\\
0 & 12 & 28\end{bmatrix}.
\]
This matrix has eigenvalues $40$, $\pm 12$.  As $12<2\sqrt{39}$, the graph
$\Gr_{\!3}([3],2)_1$ is Ramanujan.

\subsubsection{A Ramanujan example\textup{:} \texorpdfstring{$\Gr_{\!3}([2],3)_1$}{Gr\unichar{"5F}3([2],3)\unichar{"5F}1}}
\label{gravel1}
The graph $\wgr_{\!3}([2],3)$ was considered in Section
\ref{ex2}.  We have
\[
\Ad(\Gr_{\!3}([2],3)_1)=\Adw(\wgr_{\!3}([2],3)_{1,2} )=
\begin{bmatrix}
6 & 9 & 0\\
3 & 3 & 9\\
0 & 3 & 12\end{bmatrix}.
\]
The eigenvalues of this matrix are $15$ and $3\pm 3\sqrt{3}$.  Since
$3+3\sqrt{3}>2\sqrt{14}$, the graph $\Gr_{\!3}([2],3)_1$ is not
Ramanujan.

\section{The \texorpdfstring{$\ell$}{\unichar{"2113}}-adic uniformization of
\texorpdfstring{$\wgr_{\!g}([\ell],p)$}{tilde
  gr\unichar{"5F}g([\unichar{"2113}],p)} and
\texorpdfstring{$\gr_{\!g}([\ell],p)$}{gr\unichar{"5F}g([\unichar{"2113}],p)}}
\label{unif}

Let $\cB_{2g}$ be the Bruhat-Tits building for $\GSp_{2g}$ over 
$\Q_\ell$. Let $\Sk=\Sk_{2g}$ be the $1$-skeleton of $\cB_{2g}$.
Vertices $v\in\Sk\subseteq \cB_{2g}$ have a type $\ty(v)\in\Z$
with $0\leq \ty(v)\leq g$.
The {\sf special vertices} are the vertices $v$ with $\ty(v)=0$ or
$\ty(v)=g$.
To handle types of positive-dimensional cells, set
\begin{equation}
\label{gored}
\mathscr{T}_s\colonequals \{(r_0,\ldots, r_{s})\in\Z^{s+1}\mid
g\geq r_0>r_1>\cdots >r_{g}\geq 0\}
\end{equation}
for $0\leq s\leq g$.
Edges $e\in\Sk\subseteq \cB_{2g}$
have a type $\ty(e)=\mathbf{r}\in\mathscr{T}_1$.
For $r,s \in\mathscr{T}_0$, $r\neq s$, let $\Ver(\Sk)_{r,s}=\{v\in\Ver(\Sk)\mid
\ty(v)=r\text{ or }\ty(v)=s\}$ with $\Sk_{r,s}$ the induced subgraph
of $\Sk$ on the vertex set $\Ver(\Sk)_{r,s}$.

Let $R$ be a commutative ring and $M$ be an $R$-algebra with
an anti-involution $x\mapsto x^\dagger$.  
Define the {\sf unitary group} $\rU(M)$ and {\sf general unitary group} 
$\GU_R(M)$ by
\begin{align}
\label{locust}
\rU(M) &= \{x\in M\mid x^\dagger x=1\}\\
\nonumber \GU_R(M)&=\{x\in M\mid x^\dagger x\in R^\times\}.
\end{align}
Recall that $E/\bFp$ is a supersingular elliptic curve with 
$\cO=\cO_E=\End(E)$. We apply \eqref{locust} with $R=\Z[1/\ell]$
and $M=\Mat_{g\times g}(\cO_E[1/\ell])$ with anti-involution 
$x^\dagger =\overline{x}^t$ for $x\in M$.  Set
\begin{align}
\label{locust1}
\rU_g(\cO[1/\ell])&= \rU(\Mat_{g\times g}(\cO[1/\ell]))\quad\text{and}\\
\nonumber \GU_g(\cO[1/\ell])&= \GU_{\Z[1/\ell]}(\Mat_{g\times g}(\cO[1/\ell])).
\end{align}
The groups $\rU_g(\cO[1/\ell])$ and $\GU_g(\cO[1/\ell])$
act on $\cB_{2g}$ with finite quotients. 
\begin{theorem}
\label{drain}
\begin{enumerate}[\upshape (a)]
\item
\label{drain0}
$\rU_g(\cO[1/\ell])\backslash \Sk_{2g}\cong\wgr_{\!g}([\ell],p)$ as 
graphs with weights.

\item
\label{drain1}
The group $\rU_g(\cO[1/\ell])$ acting on $\Sk=\Sk_{2g}$ stabilizes
the subgraph $\Sk_{r,s}$ for $r,s\in\mathscr{T}_0$, $r\neq s$, and
$\wgr_{\!g}([\ell],p)_{r,s}\cong \rU_g(\cO[1/\ell])\backslash \Sk_{r,s}$
as graphs with weights.

\end{enumerate}
\end{theorem}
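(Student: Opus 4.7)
\textbf{Proof proposal for Theorem \ref{drain}.} The plan is to generalize the type-zero uniformization established in \cite[Sect.~9.1]{jz} for $\gr_{\!g}(\ell,p)$ and $\wgr_{\!g}(\ell,p)$ to arbitrary type by replacing self-dual symplectic lattices with the full family of lattices representing all vertices of $\cB_{2g}$. Fix the superspecial base $A_0=E^g$ and use that $\End(A_0)\otimes\Z_\ell=\Mat_{g\times g}(\OO\otimes\Z_\ell)\cong\Mat_{2g\times 2g}(\Z_\ell)$ (since $\ell\neq p$ forces $\OO\otimes\Z_\ell$ to split), so that the rational Tate module $V_\ell(A_0)\cong\Q_\ell^{2g}$ carries a symplectic form coming from a principal polarization $\lambda_0$ and an $\OO[1/\ell]$-module structure whose adjoint involution is the symplectic involution. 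Under this identification vertices of $\Sk_{2g}$ are homothety classes of $\GSp_{2g}(\Q_\ell)$-lattices $L\subseteq V_\ell(A_0)$, with the type $r=\ty(L)$ measuring the failure of $L$ to be self-dual, namely $\ell L\subseteq L^\vee\subseteq L$ with $[L:L^\vee]=\ell^{2r}$.

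First I would set up the vertex correspondence. Given a vertex $[L]$ of $\Sk_{2g}$ choose a representative $L$ with $L^\vee\subseteq L$; then $A_L\colonequals V_\ell(A_0)/L$ determines (together with $\{L':L'\subseteq L\text{ cofinite, }L'\otimes\Q_p=V_p(A_0)\}$ kept fixed at $p$) a superspecial abelian variety $\ell$-power isogenous to $A_0$, and the symplectic form on $V_\ell(A_0)$ descends to an $[\ell]$-polarization $\lambda_L$ on $A_L$ with $\ker(\lambda_L)\cong L/L^\vee$, so $\ty(A_L,\lambda_L)=r=\ty(L)$. Conversely, any $[\ell]$-polarized superspecial $\A$ is $\ell$-power isogenous to $(A_0,\lambda_0)$, so its Tate module with the induced pairing is isomorphic to some $L\subset V_\ell(A_0)$ by strong approximation for $\Sp_{2g}$ applied to the reduction of the isogeny class. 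Two such lattices give isomorphic $[\ell]$-polarized varieties precisely when they are in the same $\rU_g(\OO[1/\ell])$-orbit, because $\Aut(A_0,\lambda_0)$ acts on $V_\ell(A_0)$ through elements preserving the symplectic form, i.e., through $\rU_g(\OO[1/\ell])$ embedded in $\GSp_{2g}(\Q_\ell)$. This yields a bijection $\rU_g(\OO[1/\ell])\backslash\Ver(\Sk_{2g})\xrightarrow{\sim}\sSgpl$ respecting types.

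Next I would match edges and weights. An edge of $\Sk_{2g}$ joining vertices of distinct types $r>s$ is given by a pair $L'\subsetneq L$ with $\ell L\subseteq L'$ of types $s,r$ respectively, and this corresponds to a strict isogeny $A_L\to A_{L'}$ with kernel $L/L'$, which is an $[\ell]$-subgroup of $(A_L,\lambda_L)$ in the sense of Definition \ref{poll}\eqref{pol3}. By Proposition \ref{hawk} every strict isogeny arises this way, and two edges map to the same $[f]_b$-class in $\Iso_\ell$ precisely when they lie in one $\rU_g(\OO[1/\ell])$-orbit. For weights, the stabilizer in $\rU_g(\OO[1/\ell])$ of a vertex $[L]$ is exactly $\Aut(A_L,\lambda_L)$, and the stabilizer of an edge $[L'\subset L]$ is $\Aut(f:A_L\to A_{L'})$; thus the weight function $\w$ on the quotient graph as defined via admissible group actions (Section \ref{ha}) coincides with the $\w$ on $\wgr_{\!g}([\ell],p)$ from Remark \ref{dirt}. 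Part (\ref{drain1}) then follows because the $\rU_g(\OO[1/\ell])$-action preserves the type stratification of $\Ver(\Sk_{2g})$, so it restricts to $\Sk_{r,s}$ and the same bijection identifies the quotient with $\wgr_{\!g}([\ell],p)_{r,s}$.

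The main obstacle I anticipate is not the formal bijection but verifying the symplectic/Hermitian compatibility at $\ell$: one must check that under the splitting $\OO\otimes\Z_\ell\cong\Mat_{2\times 2}(\Z_\ell)$, the quaternionic Hermitian pairing on $\OO^g$ attached to $\lambda_0$ matches the standard $\GSp_{2g}(\Q_\ell)$ symplectic form on $V_\ell(A_0)$, and that the quaternionic-unitary involution $x\mapsto \overline{x}^t$ in \eqref{locust1} induces the symplectic involution under this identification. This is essentially a Morita-equivalence calculation but must be done carefully so that type, Weil pairing, and $[\ell]$-dual structures all line up; once it is in hand, the preceding paragraphs assemble into a proof.
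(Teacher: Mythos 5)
Your argument is sound and is in substance the same as the paper's, but the paper organizes it differently: Theorem~\ref{drain} is disposed of in one line as a special case of the complex-level uniformization Theorem~\ref{pav T}\eqref{U pav}, together with the observation that $\rU_g(\cO[1/\ell])$ preserves types (so the subgraph statement~\eqref{drain1} follows for free). The actual Tate-module work — setting up $0\to W\to V\xrightarrow{\pi}\mathbf{A}_0[\ell^\infty]\to0$, associating to an $\ell$-power isogeny $\phi:\ell^{2m}\cA_0\to\A_1$ the extra-special lattice $\ell^m\pi^{-1}(\ker\phi)$, matching chains of lattices with chains of strict isogenies, and showing that the $\rU(\End(\mathbf{A}_0)[1/\ell])$-orbits account exactly for isomorphism — lives in the proof of Theorem~\ref{pav T}, and your proposal essentially reconstructs that proof restricted to the $1$-skeleton. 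You can save yourself the reconstruction by simply citing \ref{pav T} and passing to $1$-skeletons.

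Two smaller remarks on your route. First, your appeal to ``strong approximation for $\Sp_{2g}$'' to show every $[\ell]$-polarized superspecial abelian variety is $\ell$-power isogenous to $\cA_0$ is a legitimate alternative to what the paper does; the paper instead gets this from the already-established connectedness result Theorem~\ref{ghostly} (via \cite[Thm.~43]{jz} and Proposition~\ref{hawk}), which is cleaner here because it separates the general uniformization of \ref{pav T} (which needs no superspeciality and no approximation argument) from the finiteness/surjectivity that is particular to the superspecial case. Second, the ``Morita-equivalence compatibility'' you flag as the main obstacle is real but is dealt with once and for all in the proof of Theorem~\ref{Brandt T} (the explicit identification $\cO_\HH\otimes\Z_\ell\cong\Mat_{2\times2}(\Z_\ell)$ matching conjugation with adjugate and the quaternionic Hermitian pairing with the block-diagonal symplectic form); the proof of Theorem~\ref{pav T} itself sidesteps it by working directly with the symplectic Weil pairing on $V_\ell(\mathbf{A}_0)$ and the induced injection $\rU(\End(\mathbf{A}_0)[1/\ell])\hookrightarrow\Sp(V)$. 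So your anticipated difficulty is resolved in the paper, just at a different theorem than the one you are proving.
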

\begin{proof}
This will follow as a special case of Theorem \ref{pav T}\eqref{U pav}
and the fact that the action of $\rU_g(\cO[1/\ell])$ preserves types.
\end{proof}
\noindent
Theorem \ref{drain}\eqref{drain0} implies that the graph
$\wgr_{\!g}([\ell],p)$ is connected, which we have already established
in Theorem \ref{ghostly}(1).  Likewise Theorem \ref{drain}\eqref{drain1} 
would imply
that $\wgr_{\!g}([\ell],p)_{r,s}$, $r\neq s$, is connected if we knew
that $\Sk_{r,s}$ was connected---but we already know that 
$\wgr_{\!g}([\ell],p)_{r,s}$, $r\neq s$, is connected 
from Theorem \ref{ghostly}(2).
\begin{theorem}
\label{sink}
\begin{enumerate}[\upshape (a)]
$\GU_g(\cO[1/\ell])\backslash \Sk_{2g}\cong\gr_{\!g}([\ell],p)$ as 
graphs with weights.

\end{enumerate}
\end{theorem}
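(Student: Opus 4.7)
The plan is to derive Theorem \ref{sink} from the already-established Theorem \ref{drain}(a) by exhibiting $\GU_g(\cO[1/\ell])\backslash \Sk_{2g}$ as the quotient of $\wgr_{\!g}([\ell],p)\cong \rU_g(\cO[1/\ell])\backslash \Sk_{2g}$ by the involution $\iota$ of Remark \ref{dirt}, which by Section \ref{l2} is precisely $\gr_{\!g}([\ell],p)$ as a weighted graph. Since $\rU_g(\cO[1/\ell])$ is the kernel of the similitude character $\nu:\GU_g(\cO[1/\ell])\to\Z[1/\ell]^\times$, $x\mapsto x^\dagger x$, it is normal in $\GU_g(\cO[1/\ell])$, and the quotient $Q\colonequals \GU_g(\cO[1/\ell])/\rU_g(\cO[1/\ell])$ acts on $\rU_g(\cO[1/\ell])\backslash \Sk_{2g}$ with $\GU_g(\cO[1/\ell])\backslash \Sk_{2g}\cong Q\backslash \wgr_{\!g}([\ell],p)$.

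First I would produce an explicit $w\in\GU_g(\cO[1/\ell])$ with $\nu(w)=\ell$. Because $\H_p$ splits at the prime $\ell\neq p$, there exists $\pi\in\cO[1/\ell]$ with reduced norm $\ell$, and I would take $w=\pi\cdot\Id_g$. Under the canonical embedding $\GU_g(\cO[1/\ell])\hookrightarrow \GSp_{2g}(\Q_\ell)$ arising from viewing the standard hermitian form on $\cO[1/\ell]^g$ as a symplectic form of rank $2g$ over $\Q_\ell$, the element $w$ has similitude factor $\ell$ in $\GSp_{2g}(\Q_\ell)$; accordingly its action on $\cB_{2g}$ is the opposition-on-types map, sending a vertex of type $r$ to one of type $g-r$ and more generally reversing the ordered type tuple of every simplex.

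Next I would match the resulting involution on $\wgr_{\!g}([\ell],p)$ with $\iota$. Under the lattice--abelian-variety dictionary underlying Theorem \ref{drain} (i.e., Theorem \ref{pav T}(a) applied to vertices and to edges), a vertex $[L]$ of $\wgr_{\!g}([\ell],p)$ corresponds to an $[\ell]$-polarized superspecial $\A=(A,\lambda)$ of type $r=\ty(L)$, and the symplectic dual $L^\vee$ corresponds to the $[\ell]$-dual $\hat{\A}=(\hat A,[\lambda])$ of type $g-r$; the same argument applied to the $1$-simplices—which correspond to equivalence classes of strict isogenies—shows that $w$ acts on edges by $[f]\mapsto[\hat f]$. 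This is exactly the involution $\iota$ of Remark \ref{dirt}. The remaining elements of $Q$ are represented by powers of $w$; modulo $\rU_g(\cO[1/\ell])$ these either coincide with $w$ or with the identity (the even powers come from central scalars such as $\ell\cdot\Id_g$ whose reduced norm is a square of a similitude already realized), so $Q$ acts on $\wgr_{\!g}([\ell],p)$ through the order-$2$ quotient $\langle\iota\rangle$ and $Q\backslash \wgr_{\!g}([\ell],p)=\wgr_{\!g}([\ell],p)/\iota$.

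Weight compatibility is the final check: cell stabilizers in $\GU_g(\cO[1/\ell])$ exceed those in $\rU_g(\cO[1/\ell])$ by exactly a factor of $2$ at the $w$-fixed cells and are unchanged elsewhere, and the $w$-fixed cells map precisely to the $\iota$-fixed cells of $\wgr_{\!g}([\ell],p)$; this matches the weight-doubling convention of Section \ref{l2} at ramification of $\pi:\wgr_{\!g}([\ell],p)\to\gr_{\!g}([\ell],p)$. The main obstacle is the middle paragraph: carefully tracing the duality $L\mapsto L^\vee$ through the building-to-abelian-variety dictionary to recover the isogeny-theoretic duality $\A\mapsto\hat\A$, which rests on the compatibility of the perfect pairing defining $L^\vee$ with the Weil pairing $\langle\,\,,\,\,\rangle_\lambda$ of \eqref{Weil}.
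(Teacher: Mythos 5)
Your proposal is correct in outline, but it takes a genuinely different route from the paper. The paper disposes of Theorem \ref{sink} in one line, as the $1$-skeleton of the special case of Theorem \ref{pav T}\eqref{GU pav}: the full little isogeny complex is identified directly with $\GU_g(\cO[1/\ell])\backslash\cB_g$ by running the lattice--isogeny dictionary for generalized homothety classes. You instead start from the $\rU$-statement (Theorem \ref{drain}(a)) and observe that $\rU_g(\cO[1/\ell])=\ker(\nu)$ is normal in $\GU_g(\cO[1/\ell])$, so that $\GU_g(\cO[1/\ell])\backslash\Sk_{2g}=Q\backslash\bigl(\rU_g(\cO[1/\ell])\backslash\Sk_{2g}\bigr)$ with $Q=\GU_g/\rU_g\cong\ell^{\Z}$ acting through the order-$2$ quotient generated by $w=\pi\cdot\Id_g$ (even $\nu$-valuation classes are represented by central scalars $\ell^a$, which act trivially on generalized homothety classes). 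Identifying the residual involution with $\iota$ and invoking the definition $\gr_{\!g}([\ell],p)=\wgr_{\!g}([\ell],p)/\iota$ of Section \ref{l2} then finishes the argument, including the factor-of-two weight bookkeeping at $\iota$-fixed cells. This packaging is attractive because the definitional relation $\gr=\wgr/\iota$ does most of the work and only one uniformization theorem is needed as input. Two caveats: first, what you call the main obstacle --- tracing the symplectic dual $W\mapsto W^{*}$ through the dictionary to recover $\A\mapsto\hat{\A}$ and $f\mapsto\hat{f}$ --- is precisely the content of the second half of the paper's proof of \eqref{GU pav} (the case $\ell^{a}W'^{*}=\psi W''$ versus dual codomains), so your route does not actually avoid that verification, it only relocates it; you should carry it out rather than leave it as a sketch. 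Second, the existence of $\pi\in\cO[1/\ell]$ of reduced norm exactly $\ell$ is slightly more than you need and slightly more than is immediate; it is safer (and sufficient) to produce, via strong approximation at the split place $\ell$, an element whose reduced norm is an odd power of $\ell$, since only the parity of $v_\ell(\nu(w))$ matters for the action on generalized homothety classes.
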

\begin{proof}
This will follow as a special case of Theorem \ref{pav T}\eqref{GU pav}.
\end{proof}
In particular, Theorem \ref{sink} implies that the graphs
$\gr_{\!g}([\ell],p)$ and
$\gr_{\!g}([\ell],p)_{\overline{r},\overline{s}}$ are
connected. However, as above, connectedness was already shown in Theorem
  \ref{ghostly}(1).

\section{Isogeny complexes of \texorpdfstring{[$\ell$]}{[\unichar{"2113}]}-polarized abelian varieties}
\label{cell}

Fix a prime $\ell$.
In this Section \ref{cell} and Section \ref{quotient}, we work with isogenies of degree 
$\ell^n$ of   general polarized abelian
varieties $\A=(A,\lambda)$ over an algebraically closed field $k$
with $(\charr k,\ell)=1$.  We then return to superspecial
abelian varieties in Section \ref{donut}.

We will define $[\ell]$-isogeny complexes of 
$[\ell]$-polarized abelian varieties, generalizing the
$[\ell]$-isogeny graphs of Section \ref{gen}.

\subsection{\texorpdfstring{\except{toc}{\boldmath{$\Delta$}}\for{toc}{$\Delta$}}{\unichar{"0394}}-complexes and \texorpdfstring{\except{toc}{\boldmath{$\Delta$}}\for{toc}{$\Delta$}}{\unichar{"0394}}-complexes with half-faces}
\label{tolled}

The complexes $[\ell]$-isogenies naturally form are
 the higher-dimensional analogues
of multigraphs, and quotients of these complexes by an involution.
They are a mild generalization of simplicial complexes
  introduced by Eilenberg and Zilber \cite{ez} in 1950 as
{\sf semi-simplicial complexes}.
Hatcher \cite[Sect.~2.1]{hat} calls them {\sf $\Delta$-complexes}
and we follow this in the interest of brevity and disambiguation.  
However, we use the  purely combinatorial 
definition of Eilenberg-Zilber \cite[Sect.~1]{ez}.
Taking the quotient of a $\Delta$-complex
by an involution gives a {\sf $\Delta$-complex with half-faces} or an
{\sf h-$\Delta$-complex}.  A one-dimensional $\Delta$-complex is a 
multigraph, called simply a graph in Section \ref{grs}.
 A one-dimensional $\Delta$-complex with half-faces or a
one-dimensional 
h-$\Delta$-complex is a graph with half-edges or
an h-graph
as in Section \ref{grs}.

\subsubsection{\texorpdfstring{$\Delta$}{\unichar{"0394}}-complexes}
\label{dated}

\begin{definition}
\label{simp}
{\rm
\begin{enumerate}[\upshape (a)]
\item
\label{simp1}
(\cite[Sect.~1]{ez})  A {\sf $\Delta$-complex}  $\cK$ of dimension $g$ 
consists of a set $\Sigma=\Sigma(\cK)=\{\sigma\}$ of simplexes together with two
functions on $\Sigma$.  The first assigns to any $\sigma\in\Sigma$
an integer $j=\dim(\sigma)$ with 
$g\geq j\geq 0$ called the dimension of $\sigma$.
Put $\Sigma_{j}=\Sigma_j(\cK)=\{\sigma\mid \dim(\sigma)=j\}\subseteq \Sigma$ 
and say that $\sigma\in \Sigma_j$ is a $j$-simplex.
The $0$-simplexes $\Sigma_0(\cK)$ are called
the {\sf vertices} $\Ver(\cK)$ of $\cK$.
Likewise the $1$-simplexes $\Sigma_1(\cK)$
are called the {\sf edges} $\Ed(\cK)$ of $\cK$. We assume
$\Sigma_g\neq\emptyset$;
the simplexes $\sigma\in\Sigma_g(\cK)$ are called {\sf facets}.

The second function on $\Sigma$ associates to each $j$-simplex $\sigma$ and to each integer
$0\leq i\leq j$ a $(j-1)$-simplex $\Fa_i(\sigma)$ called the $i^{\rm th}$
face of $\sigma$ such that 
$\Fa_i(\Fa_{i'}(\sigma))=\Fa_{i'-1}(\Fa_{i}(\sigma))$
for $i<i'$ and $j>1$.  
Iteration of this map gives lower-dimensional faces.
For $0\leq i_1< \cdots < i_n\leq j$ define inductively
\[
\Fa_{i_1,\ldots ,i_n}(\sigma)=\Fa_{i_1}(\Fa_{i_2,\ldots, i_n}(\sigma)),
\]
which is a $(j-n)$-simplex.  If $0\leq k_0<\cdots <k_{j-n}\leq j$
is the set  complementary to $\{i_1,\ldots, i_n\}\subseteq\{1, 2, \ldots, j\}$,
then we also write $\Fa_{i_1, \ldots, i_n}(\sigma)=
\sigma_{(k_0,\ldots, k_{j-n})}$.
In particular for $\sigma\in\Sigma_j$ we have $\sigma_{(i)}\in\Sigma_0$
for $0\leq i\leq j$ called the $i^{\rm th}$ vertex of $\sigma$.
The definition of the $i^{\rm th}$ face does not exclude the possibility
of $\sigma,\tau\in\Sigma_j$, $\sigma\neq \tau$, with
$\Fa_i(\sigma)=\Fa_i(\tau)$ for $0\leq i\leq j$ (which would
not be allowed in a simplicial complex).  Hence
a $\Delta$-complex is analogous to a multigraph.

\item
\label{simp2}
A {\sf weighted $\Delta$-complex} is a $\Delta$-complex
together with a {\sf weight function} $\w$ mapping simplexes
to positive integers such that if $\sigma$ is a simplex and 
$\sigma'$ is a face of $\sigma$ then $\w(\sigma)|\w(\sigma')$.
\end{enumerate}
}
\end{definition}

\subsubsection{\texorpdfstring{$\Delta$}{\unichar{"0394}}-complexes with half-faces}
\label{ha}
Suppose $\cK$ is a $\Delta$-complex and $\iota:\cK\rightarrow \cK$
is an involution.  As in \cite[Sect.~IX.10]{br}, say that
$\iota$ is {\sf admissible} if $\iota$ fixing a simplex $\sigma$ of $\cK$
is equivalent to $\iota$ fixing $\sigma$ pointwise.
If the involution $\iota$ of the $\Delta$-complex $\cK$ is admissible,
then the quotient $\cK/\iota$ is naturally again a $\Delta$-complex.
However, if the action is not admissible, then the quotient
$\cK/\iota$ will in general have half-faces and will not be a
$\Delta$-complex.
We refer to the images of simplexes in $\cK$ under the natural
projection $\cK\rightarrow \cK/\iota$ as cells.
 Many references only consider admissible actions
since this is obtainable by passing to a
subdivision:  Let $\cK'$ be the first barycentric subdivision of $\cK$.
Then $\iota$ induces an involution $\iota:\cK'\rightarrow \cK'$
which is admissible.  However, passing to the barycentric subdivision
explodes the number of cells, making computing examples too difficult.
Hence we do not pass to the barycentric subdivision, and so we 
have to deal directly with half-faces.  We draw some of the half-faces
in dimensions $1$, $2$, and $3$ which occur in Section \ref{pari}.

\subsection{The enhanced \texorpdfstring{\except{toc}{\boldmath{$[\ell]$}}\for{toc}{$[\ell]$}}{[\unichar{"2113}]}-isogeny complex 
\texorpdfstring{\except{toc}{\boldmath{$\wco_\ell(\cA_0)$}}\for{toc}{$\wco_\ell(\cA_0)$}}{tilde co\unichar{"5F}\unichar{"2113}(A0)}}
\label{enha}

The isogenies we will consider are of the following type:
\begin{definition}
{\rm
Two polarized abelian varieties $\X=(X, \lambda)$ and
$\X'=(X',\lambda')$ are said to be {\sf $\ell$-power isogenous} if
there exists an isogeny in the sense of Definition
\ref{poll}\eqref{pol2a} from $\ell^n\X=(X,\ell^n\lambda)$ to $\X'$ 
for some $n\in\NN$.  We will say that they're {\sf evenly} (respectively, {\sf
  oddly}) isogenous if $n$ is even (respectively, odd).
Note that a principally polarized abelian variety is oddly isogenous to
its $[\ell]$-dual  by Remark \ref{pol}\eqref{pol1.5}
since $1$ is odd.
}
\end{definition}

Pick a base $g$-dimensional principally polarized abelian variety
$\cA_0=(\mathbf{A}_0, \lambda_0)$ over an 
algebraically closed field $k$ with
$\ell$ coprime to $\charr k$; $\mathbf{A}_0$ is not required to be superspecial.
All polarized abelian
varieties discussed in this section and the next will be polarized and
$\ell$-power isogenous to $\cA_0$.

We will first define the enhanced isogeny complex 
$\wco_{\ell}(\cA_0)$.
\begin{definition}
\label{enh}
{\rm
\begin{enumerate}[\upshape (a)]
\item
\label{enh1}

The vertices $\Ver(\wco_{\ell}(\cA_0))$ or $0$-simplexes 
$\Sigma_0(\wco_{\ell}(\cA_0))$
 correspond to isomorphism classes $[\A=(A,\lambda)]$
of abelian varieties $A$ with a 
polarization $\lambda$ of type $r$ with  $0\le r\le g$ as in 
Remark \ref{pol}\eqref{pol0} such that $\A$ is $\ell$-power
isogenous to $\cA_0$. Recall that a polarization of type $0$ is simply
a principal polarization. If $\A=(A,\lambda)$ with $\lambda$ a polarization
of type $g$, then there is a principal polarization $\lambda'$ of $A$
with $[\A=\ell\A'=(A,\ell\lambda')]$ 
for $\A'=(A,\lambda')$
as in Remark \ref{pol}\eqref{pol1.5}.
Thus there is a natural bijection between polarizations
of type $g$ on $A$ and those of type $0$.
Nevertheless, $[\A]$ and $[\A']$ 
correspond to distinct $0$-cells of the enhanced complex.  
The weight of a vertex $[\A]\in\Sigma_0(\wco_{\ell}(\cA_0))$ is 
$\#\Aut(\A)$.

Suppose $\A_i=(A_i, \lambda_i)$ with $\lambda_i$ a polarization of type $r_i$.
For $r_0>r_1$ the (unoriented) edges $\Sigma_1(\wco_{\ell}(\cA_0))$ 
are isomorphism classes of strict maps 
$f:\A_0\rightarrow \A_1$ as in Definition \ref{poll}\eqref{pol2a},
where an isomorphism between two strict maps is defined in the natural manner.  Such an edge will be said to be of {\sf type $(r_0,r_1)$}.  
If $e\in\Sigma_1(\wco_\ell(\cA_0))$ is the 
edge corresponding to $f$, then the faces of $e$ are the vertices
$\Fa_0(e)= [\A_1]=e_{(1)}\in\Sigma_0(\wco_{\ell}(\cA_0))$
and $\Fa_1(e)=[\A_0]=e_{(0)}\in\Sigma_0(\wco_{\ell}(\cA_0))$.
The weight $\w(e)$ of the edge $e$ is the cardinality of the automorphism group of $f:\A_1\rightarrow \A_2$.

At height $k$, a $k$-simplex $\sigma$ is an isomorphism class of sequences 
of strict maps 
\[
\A_0\xrightarrow{f_1}\A_1\xrightarrow{f_2}\A_2\xrightarrow{f_3}\cdots\xrightarrow{f_{k-1}}\A_{k-1}
\xrightarrow{f_{k}}\A_{k}.
\]
Necessarily $g\geq r_0>r_1>\cdots >r_{k-1}>r_{k}\geq 0$.
Such a $k$-simplex will be said to be of 
{\sf type } $\bfr=(r_0,\ldots,r_{k})$. 
We denote by $\Sigma_{\bfr}\subseteq
\Sigma_k$ the set of $k$-cells  of type
$\bfr=(r_0,\ldots, r_{k})$, $g\geq r_0>r_1>\cdots >r_{k-1}>r_{k}\geq 0$.
The $(k+1)$ {\sf faces} of $\sigma\in\Sigma_{\bfr}$
are 
the $(k-1)$-simplexes  as follows:
\begin{enumerate}[\upshape (a)] 
\item
The face $\Fa_0(\sigma)$ of type $(r_1, r_2,\,\ldots \, ,r_{k})$
is the isomorphism class of the sequence of strict maps
\[
\A_1\xrightarrow{f_2}\A_2\xrightarrow{f_3}\cdots\xrightarrow{f_{k-1}}\A_{k-1}\xrightarrow{f_k}\A_{k}.
\]
\item
The face $\Fa_i(\sigma)$ for $1\leq i\leq k-1$ of type $(r_0,\ldots, 
\widehat{r}_i,\ldots, r_{k})$ (here $\widehat{r}_i$ means omit $r_i$)
is the isomorphism class of the sequence of strict maps
\[
\A_0\xrightarrow{f_1}\A_1\xrightarrow{f_2}\A_2\xrightarrow{f_3}\cdots 
\xrightarrow{f_{i-1}}\A_{i-1}
\xrightarrow{f_{i+1}\circ f_{i}}\A_{i+1}\xrightarrow{f_{i+2}}\cdots
\xrightarrow{f_{k-1}}\A_{k-1}
\xrightarrow{f_k}\A_{k}.
\]  
\item
The face $\Fa_{k}(\sigma)$ of type $(r_0,\ldots  ,r_{k-1})$
is the isomorphism class of the sequence
of strict maps
\[
\A_0\xrightarrow{f_1}\A_1\xrightarrow{f_2}\cdots
\xrightarrow{f_{k-1}}\A_{k-1}.
\]
\end{enumerate}
The weight $\w(\sigma)$  of the $k$-simplex $\sigma$
 is the cardinality of the automorphism group of the sequence of maps.
In particular that there are $\binom{g+1}{k+1}$ types of
$k$-simplexes.\\
{\bf NOTE:} The $\Delta$-complex
$\wco_{\ell}(\cA_0)$ is connected if and only if $\cA_0=(\mathbf{A}_0,\lambda_0)$
is oddly $\ell$-isogenous to itself, otherwise it has two
connected components consisting of those polarized abelian varieties
that are evenly and
oddly $\ell$-isogenous to $\cA_0$.  Let $\wco^0_{\ell}(\cA_0)$ be the
connected component of $\wco_{\ell}(\cA_0)$ containing the
vertex  $[\cA_0]$. The vertices of $\wco^0_\ell(\cA_0)$ are all isomorphism
classes of polarized abelian varieties evenly $\ell$-power isogenous
to $\cA_0$.
\item
\label{dual}
The {\sf dual map} $\iota:\wco_{\ell}(\cA_0)\rightarrow
\wco_{\ell}(\cA_0)$ is the involution sending each simplex to its dual:
a vertex $[\A]$ is sent to its dual $[\hat{\A}]$; observe that if $\A$ was
of type $r$ then $\hat{\A}$ will be of type $\hat{r}=g-r$.  Each higher
simplex
$\A_0\xrightarrow{f_1}\A_1\xrightarrow{f_2}\cdots\xrightarrow{f_{k-1}}\A_{k-1}
\xrightarrow{f_k}\A_{k}$
of type $\bfr=(r_0,\ldots,r_{k})$ gets sent to
$\hat{\A}_{k}\xrightarrow{\hat{f}_k}\hat{\A}_{k-1}\xrightarrow{\hat{f}_{k-1}}\cdots\xrightarrow{\hat{f}_2}\hat{\A}_1\xrightarrow{\hat{f}_1}\hat{\A}_0$
of type $\hbr\colonequals (g-r_{k},\ldots,g-r_0)$.  Observe that this reverses any
orientation of the simplex precisely when $k\equiv1,\,2\pmod{4}$.
\\ {\bf NOTE:} If $\wco_{\ell}(\cA_0)$ is disconnected, then $\iota$
swaps the two components.

\end{enumerate}
}
\end{definition}

\subsection{The little \texorpdfstring{\except{toc}{\boldmath{$[\ell]$}}\for{toc}{$[\ell]$}}{[\unichar{"2113}]}-isogeny graph 
\texorpdfstring{\except{toc}{\boldmath{$\co_\ell(\cA_0)$}}\for{toc}{$\co_\ell(\cA_0)$}}{co\unichar{"5F}\unichar{"2113}(A0)}}
\label{lit}

We define the little $[\ell]$-isogeny complex as the quotient
$\co_\ell(\cA_0)\colonequals \wco_\ell(\cA_0)/\iota$.
Every $k$-cell of $\co_\ell(\cA_0)$ has a type
$\obr=\{\bfr,\hbr \}$ with $\obr=\overline{\hbr}$.
Unfortunately, under this
map some faces are self-dual without every face of their boundary
being self-dual, resulting in what we call {\sf half-faces}.  
Because of this, $\co_{\ell}(\cA_0)$ does not inherit the structure
of a $\Delta$-complex from $\wco_{\ell}(\cA_0)$ -- it is a 
$\Delta$-complex with half-faces, or an h-$\Delta$-complex.

In a future paper \cite{jz2} we consider the cohomology
of $\wco_{\ell}(\cA_0)$ and $\co_{\ell}(\cA_0)$
when $\cA_0$ is a principally polarized
superspecial abelian variety in characteristic $p\neq\ell$.
The cohomology of $\wco_{\ell}(\cA_0)$ is straightforward --
Eilenberg and Zilber \cite{ez} proved that the canonical cochain
complex constructed from the simplexes computes the cohomology of 
a $\Delta$-complex.  But the cohomology of $\co_{\ell}(\cA_0)$
is much trickier.  Topologically the right thing to do is clearly
to barycentrically subdivide all the simplexes of $\wco_{\ell}(\cA_0)$
 before taking the quotient by the dual map $\iota$.  But this greatly
increases the number of cells, making computing examples out of reach.
So we will deal with the half-faces directly.  We examine the low-dimensional
cells in $\co_{\ell}(\cA_0)$ in the next Section \ref{pari} 
and find that the $k$-cells in $\co_{\ell}(\cA_0)$ are balls for $k\leq 4$,
so that $\co_{\ell}(\cA_0)$ is naturally a CW-complex if the dimension $g\leq 4$.
We can therefore compute its cohomology in this case from a canonical
cochain complex constructed from its cells.

\subsection{Half-faces of low dimension}
\label{pari}

In this section we describe what half-faces look like in low dimension.
For each example, we consider the involution $\iota$ which reverses
the order of the vertices in a standard simplex
 with the induced action on each simplex
in the picture.

A $0$-dimensional face is a vertex or point, a $0$-dimensional
half-face is also functionally a point.

A $1$-dimensional face is an edge. A $1$-dimensional half-face is a
half-edge as described and drawn in \cite{Kur} and subsequently
\cite{ijklz1}, \cite{ijklz2}, \cite{jz}:\\[.2in]

\begin{center}
\scalebox{1.2}{
\begin{tikzpicture}
\draw (-1,0) -- (1,0);
\draw[dashed] (0,0.3) -- (0,-0.3);
\fill[black] (-1,0) circle (.08cm);
\fill[black] (1,0) circle (.08cm);
\end{tikzpicture}
}
$\quad\qquad$\raisebox{1.4ex}{$\longrightarrow$}$\quad\qquad$
\scalebox{1.2}{
\begin{tikzpicture}
\draw (-1,0) -- (0,0);
\draw[dashed] (0,0.3) -- (0,-0.3);
\fill[black] (-1,0) circle (.08cm);
\fill[white] (1,0) circle (.08cm);
\end{tikzpicture}.
}
\end{center}
\vspace*{.2in}
Here the original complex has $2$ vertices and $1$ edge.  The
quotient complex by $\iota$ has $1$ vertex and $1$ half-edge.
Note that the half-edge is contractible to its vertex.

A $2$-dimensional face is a triangle; it has $1$ $2$-simplex,
$3$ edges, and $3$ vertices.  The quotient by $\iota$ in turn has
$1$ half-$2$-simplex, $1$ edge, $1$ half-edge, and $2$ vertices.
 A $2$-dimensional half-face is a
half-triangle as seen below:\\[.1in]

\begin{center}
\scalebox{1.2}{
\begin{tikzpicture}
\draw (-1,0) -- (1,0);
\draw (-1,0) -- (0,1.73);
\draw (1,0) -- (0,1.73);
\draw[dashed] (0,2.03) -- (0,-0.28);
\fill[black] (-1,0) circle (.08cm);
\fill[black] (1,0) circle (.08cm);
\fill[black] (0,1.73) circle (.08cm);
\end{tikzpicture}
}
$\quad\qquad$\raisebox{7ex}{$\longrightarrow$}$\quad\qquad$
\scalebox{1.2}{
\begin{tikzpicture}
\draw (-1,0) -- (0,0);
\draw (-1,0) -- (0,1.73);
\draw[dashed] (0,2.03) -- (0,-0.28);
\fill[black] (-1,0) circle (.08cm);
\fill[white] (1,0) circle (.08cm);
\fill[black] (0,1.73) circle (.08cm);
\end{tikzpicture}.
}
\end{center}
\vspace*{.1in}
Note that the half-face and its associated half-edge are contractible
to its full edge.

It's also possible for a full $2$-face to have a half-edge as 
shown below:\\[.2in]

\begin{center}
\scalebox{1.2}{
\begin{tikzpicture}
\draw (-1,0) -- (1,0);
\draw (-1,0) -- (0,1.73);
\draw (1,0) -- (0,1.73);
\draw (-1,0) -- (0,-1.73);
\draw (1,0) -- (0,-1.73);
\node at (0,0) {\large $\circlearrowleft$};
\fill[black] (-1,0) circle (.08cm);
\fill[black] (1,0) circle (.08cm);
\fill[black] (0,1.73) circle (.08cm);
\fill[black] (0,-1.73) circle (.08cm);
\fill[black] (0,0) circle (.03cm);
\end{tikzpicture}
}
$\quad\qquad$\raisebox{11.4ex}{$\longrightarrow$}$\quad\qquad$
\raisebox{3ex}{\scalebox{1.2}{
\begin{tikzpicture}
\draw (0,-1) .. controls (1.1,-0.6) and (1.1,0.5) .. (0,1.73);
\draw (0,-1) .. controls (-1.1,-0.6) and (-1.1,0.5) .. (0,1.73);
\draw (0,-1) -- (0,0);
\fill[black] (0,-1) circle (.08cm);
\fill[black] (0,1.73) circle (.08cm);
\end{tikzpicture}.
}}
\end{center}
\vspace*{.2in}
The quotient has $1$ $2$-face, $2$ edges, $1$ half-edge, and $2$
vertices.
Note that the half-edge is contractible leaving the triangle a digon.

A $3$-dimensional face is a tetrahedron with the involution acting as
rotation about the indicated axis which passes through the midpoint
$A$ of an edge and the midpoint $B$ of the opposing edge. We label
where these midpoints go in the quotient by $\iota$ using the same letter.
 A $3$-dimensional half-face is a
half-tetrahedron as seen below:

\begin{center}
\scalebox{1.2}{
\includegraphics[scale=0.3]{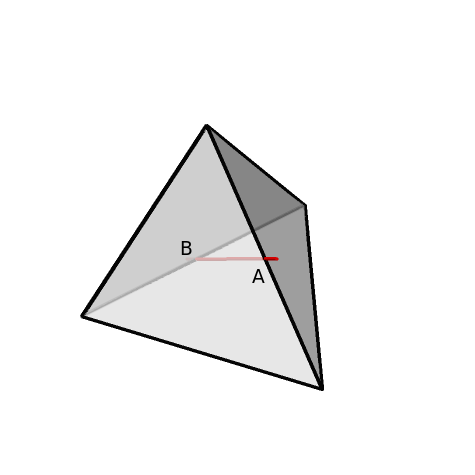}
\raisebox{12ex}{$\longrightarrow$}
\includegraphics[scale=0.3]{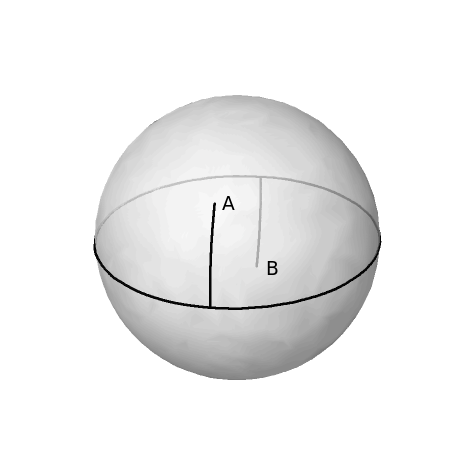}\raisebox{5ex}{.}
}
\end{center}
The original simplex has $1$ $3$-simplex, $4$ $2$-simplexes, $6$ edges,
and $4$ vertices.  The quotient has $1$ half-$3$-simplex, $2$ 
$2$-simplexes, $2$ edges, $2$ half-edges, and $2$ vertices.
Note that the boundary of the half tetrahedron contains half-edges and
faces that turn into digons when the half-edges are contracted as
described above.  The half-tetrahedron itself is still a topological
$3$-ball $B^3$, and it is not contractible to its boundary.

Once we hit faces of dimension $4$ and higher, it is no longer really possible
to draw pictures.  The {\sf standard} $k$-simplex $\Delta_k$ in $\R^{k+1}$ is
\[
\Delta_k =\{(x_1,\ldots, x_{k+1})\mid \sum_{i=1}^{k+1}x_i=1, x_i\geq 0\text{ for }
1\leq i\leq k+1\}.
\]
In this standard $k$-simplex, the involution $\iota$ reversing
the order of the vertices is induced by the linear involution of
$\R^{k+1}$ given in the standard basis by the matrix $M=M_{k+1}$ with
$1$'s on the antidiagonal and $0$'s elsewhere.  The eigenvalues of
the matrix $M_{k+1}$ are $(k+1)/2$ $+1$'s, $(k+1)/2$ $-1$'s if $k$ is odd
and $k/2 +1$ $+1$'s, $k/2$ $-1$'s if $k$ is even. The vector $\vec{v}\in\R^{k+1}$
with all entries $1$ is an eigenvector of $M_{k+1}$ with eigenvalue $1$
and $\vec{v}$ is perpendicular to the affine hyperplane 
$\sum_{i=1}^{k+1}i=1$.  Hence the eigenvalues of $M_{k+1}$ acting on $\Delta_k$
are $(k-1)/2$ $+1$'s, $(k+1)/2$ $-1$'s if $k$ is odd and
$k/2$ $+1$'s, $k/2$ $-1$'s if $k$ is even.

In particular  on $\Delta_4$ the involution acts on $\Delta_4$
 with $2$ $+1$ eigenvalues and 
$2$ $-1$ eigenvalues.  Hence the quotient of $\Delta_4$ by $\iota$
is $\text{(the cone on }\RP^1)\times B^2$, the cone on $\RP^1$ coming
from the $2$ $-1$ eigenvalues and the $2$-ball $B^2$ coming from
the $2$ $+1$-eigenvalues.  But $\RP^1$ is just the circle $S^1$,
so  the half $4$-simplex $\Delta_4/\iota$
is homeomorphic to $\textup(\text{the cone on } S^1\textup)\times B^2$, which is just
$B^2\times B^2\cong B^4$.  Hence a half $4$-simplex is still a 
topological ball $B^4$ and it not contractible to its boundary.

Now on the standard $5$-simplex $\Delta_5$ the involution acts with 
eigenvalues $3$ $-1$'s and $2$ $+1$'s.  As above this gives us that
a half $5$-simplex is homeomorphic to $S\colonequals\textup{(}\text{the cone on }\RP^2\textup{)}\times B^2$.
However, we claim that now in dimension $5$ this is {\em{not}} 
the ball $B^5$; here is one way to see this:
\begin{proposition}
\label{denver}
Let $S=\textup{(}\text{\rm the cone on }\RP^2\textup{)}\times B^2$.
Then $S$ is not homeomorphic to $B^5$, and in fact $\partial S$ is not homotopic to $\partial B^5$.
\end{proposition}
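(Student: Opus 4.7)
The plan is to identify $\partial S$ explicitly as a familiar topological space and then detect, via a homology calculation, that it is not homotopy equivalent to $S^4 = \partial B^5$; the homeomorphism statement $S \not\cong B^5$ then follows at once, since any such homeomorphism would restrict to a homeomorphism $\partial S \cong S^4$. The key observation is that $S$ is itself the cone on a join. Since $B^2 = C(S^1)$, one can write $S = C(\RP^2) \times C(S^1)$, and the standard homeomorphism $C(X) \times C(Y) \cong C(X * Y)$ (with $*$ the topological join) exhibits $S$ as a cone whose boundary is
\[
\partial S \;\cong\; \RP^2 * S^1.
\]
The same identification can be read off directly from the pushout decomposition $\partial S = (\RP^2 \times B^2) \cup_{\RP^2 \times S^1} (C(\RP^2) \times S^1)$, which is the standard ``two half-cones'' presentation of the join.

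Next I would invoke the classical homotopy equivalence $X * S^n \simeq \Sigma^{n+1}X$, which yields $\partial S \simeq \Sigma^2 \RP^2$. By the suspension isomorphism,
\[
H_3(\partial S;\, \Z) \;\cong\; H_1(\RP^2;\, \Z) \;=\; \Z/2\Z,
\]
whereas $H_3(S^4;\, \Z) = 0$. Hence $\partial S$ and $S^4$ have distinct integral homology and in particular are not homotopy equivalent, and \emph{a fortiori} $S$ and $B^5$ are not homeomorphic.

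The only step with real content is the homeomorphism $C(X) \times C(Y) \cong C(X * Y)$; this is standard and can be verified by explicit formulas or by recognizing the universal property of the relevant pushout. If one prefers to bypass the join identification entirely, the same conclusion follows from a direct Mayer--Vietoris calculation for the decomposition $\partial S = (\RP^2 \times B^2) \cup (C(\RP^2) \times S^1)$ with intersection $\RP^2 \times S^1$: using the contractibility of $B^2$ and of $C(\RP^2)$, together with K\"unneth for the intersection, one again obtains $H_3(\partial S;\, \Z/2\Z) = \Z/2\Z$, ruling out a homotopy equivalence with $S^4$.
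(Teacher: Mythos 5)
Your proof is correct and follows the same strategy as the paper's: both identify $\partial S$ with (a space homotopy equivalent to) $\Sigma^2\RP^2$ and then detect the nonvanishing $H_3(\partial S;\Z)\cong H_1(\RP^2;\Z)=\Z/2\Z$, which rules out a homotopy equivalence with $S^4=\partial B^5$. Your write-up is actually more careful: you justify the identification $\partial S\simeq\Sigma^2\RP^2$ via $B^2=C(S^1)$, the cone--join homeomorphism $C(X)\times C(Y)\cong C(X*Y)$, and $X*S^1\simeq\Sigma^2 X$ (with a Mayer--Vietoris fallback), whereas the paper simply asserts the homotopy equivalence, prefaced by a remark about $\pi_1(\RP^2)$ that plays no visible role in the argument.
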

\begin{proof}
The first statement clearly follows from the second.   We claim that 
\[
H_3(\partial S,\Z)\neq 0, \text{ whereas }
H_3(\partial B^5,\Z)=0.
\]
Firstly, recall that $\pi_1(\RP^2)=\Z/2\Z$.  But then $\partial S$ is 
homotopic to the double suspension $\Sigma^2(\RP^2)$.  Since each
suspension moves homology up by $1$ we have that $H_3(\Sigma^2(\RP^2),\Z)\neq 0$
since $H_1(\RP^2,\Z)\neq 0$. But $H_3(\partial B^5,\Z)=0$ since $\partial B^5=S^4$.
\end{proof}
We summarize our discussion above in the following theorem.
\begin{theorem}
\label{rated}
Let $\cA_0$ be a $g$-dimensional principally polarized abelian variety
over an algebraically closed field $k$ with $\charr k\neq \ell$.
Let $K$ be the $\Delta$-complex $\wco_\ell(\cA_0)$ and
$\iota$ the involution of $K$ given by taking isogenies to their
duals and taking $[\ell]$-polarized abelian varieties to their
$[\ell]$-duals.  Set $K/\iota=\co_\ell(\cA_0)=\wco_\ell(\cA_0)/\iota$.
\begin{enumerate}[\upshape (a)]
\item
The half-faces of $K/\iota$ of dimensions $1$ and $2$ are homeomorphic
to the standard cells $B^1$ and $B^2$, and are contractible to their 
boundary.
\item
The half-faces of $K/\iota$ of dimensions $3$ and $4$ are homeomorphic
to the standard cells $B^3$ and $B^4$, and are {\bf{not}}
contractible to their boundary.
\item
The half-faces of $K/\iota$ of dimension $5$ and higher are
{\bf{not}} homeomorphic to the standard cell \textup{(}i.e., the ball of the
same dimension\textup{)}.
\item
Suppose the dimension $g\leq 4$.  Then $\co_\ell(\cA_0)=K/\iota$ can
be given the structure of a CW-complex $K^\ast$, after homotpoically
collapsing the h-$1$-cells and h-$2$-cells.  The CW-complex $K^\ast$
with $0$-cells the $0$-cells of $K/\iota$, $1$-cells the regular edges
of $K/\iota$, $2$-cells the regular $2$-simplexes of $K/\iota$,
$3$-cells equal to the regular $3$-simplexes and half $3$-simplexes of
$K/\iota$, and $4$-cells equal to the regular $4$-simplexes and half
$4$-simplexes of $K/\iota$.

\end{enumerate}
\end{theorem}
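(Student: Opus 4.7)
The plan is to reduce the theorem to a linear-algebraic identification of the quotient $\Delta_k/\iota$, where $\Delta_k$ is the standard $k$-simplex and $\iota$ reverses the order of its vertices, and then carry out the resulting topological computations. Any $k$-simplex of $K=\wco_\ell(\cA_0)$ fixed by $\iota$ has its induced action conjugate to the linear involution $M_{k+1}$ on $\Delta_k$; because the centroid is a fixed point and $\Delta_k$ is a convex $\iota$-invariant neighborhood of it, I would use equivariant linearization (via an $\iota$-invariant metric and the radial homeomorphism from the centroid) to replace $\Delta_k$ by an invariant $k$-ball on which $\iota$ acts linearly. The eigenvalue count in the excerpt gives $n_k=\lfloor k/2\rfloor$ eigenvalues $+1$ and $m_k=\lceil k/2\rceil$ eigenvalues $-1$ on the affine hyperplane containing $\Delta_k$, so decomposing coordinates according to eigenspaces yields
\[
\Delta_k/\iota \;\cong\; B^{n_k}\times C(\RP^{m_k-1}),
\]
where $C(-)$ denotes the closed cone and $C(\RP^{-1})$ is a single point.

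Parts (a) and (b) now follow from this model because $m_k\le 2$ precisely when $k\le 4$: in that range $\RP^{m_k-1}$ is either empty, a point, or $S^1$, so $C(\RP^{m_k-1})$ is itself a ball and $\Delta_k/\iota\cong B^k$. For $k=1,2$ in part (a), the $-1$ eigenspace is one-dimensional and the half-face strongly deformation retracts along that direction onto the image of the $+1$ eigenspace, which lies in the boundary (the proper vertex of a half-edge and the full edge of a half-$2$-simplex, respectively), yielding the claimed contraction. For $k=3,4$ in part (b), the half-face is still a topological ball $B^k$, but no ball deformation retracts onto its boundary sphere $S^{k-1}$ by the standard obstruction $H_{k-1}(B^k)=0\ne\Z=H_{k-1}(S^{k-1})$.

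For part (c), when $k\ge 5$ one has $m_k\ge 3$, so $\RP^{m_k-1}$ is a closed manifold with $\pi_1=\Z/2$ and is not a sphere. I identify the boundary of the model as a topological join
\[
\partial\bigl(B^{n_k}\times C(\RP^{m_k-1})\bigr) \;=\; \bigl(S^{n_k-1}\times C(\RP^{m_k-1})\bigr)\cup\bigl(B^{n_k}\times \RP^{m_k-1}\bigr) \;\cong\; S^{n_k-1}\ast \RP^{m_k-1},
\]
using the description of a join $X\ast Y$ as $(CX\times Y)\cup(X\times CY)$ glued along $X\times Y$. The join formula $\tilde H_*(S^{n_k-1}\ast \RP^{m_k-1})\cong \tilde H_{*-n_k}(\RP^{m_k-1})$ then produces a $\Z/2$ class in dimension $n_k+1<k-1$, so the boundary is not $S^{k-1}$ and $\Delta_k/\iota$ is not $B^k$; this recovers Proposition \ref{denver} when $n_k=2$, $m_k=3$. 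I expect the careful justification of the equivariant linearization and the join identification to be the main technical obstacle of the proof.

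For part (d), the bound $g\le 4$ forces every simplex of $K$ to have dimension at most $4$, so by (a) and (b) every half-face of $K/\iota$ is a topological ball. The deformation retractions from part (a) provide compatible strong deformation retractions of each h-$1$-cell onto its proper vertex and of each h-$2$-cell onto its full edge; because these contractions fix the relevant portions of the boundary pointwise, they glue to a global strong deformation retraction of $K/\iota$ onto a subspace $K^\ast$. It remains to verify that $K^\ast$ carries a CW-structure with the stated cells: the characteristic maps of the regular edges, regular $2$-simplices, and full and half $3$- and $4$-simplices of $K/\iota$ descend to continuous maps of $B^k$ into the $(k-1)$-skeleton of $K^\ast$, since collapsing h-cells in the boundary of a larger simplex amounts only to post-composing the simplicial attaching map with a continuous collapse map. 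This matching of cell types completes the proof.
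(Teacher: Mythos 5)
Your overall strategy mirrors the paper's: reduce to the linear involution $M_{k+1}$ on $\Delta_k$, compute the $\pm1$ eigenvalue multiplicities $n_k=\lfloor k/2\rfloor$ and $m_k=\lceil k/2\rceil$ on the hyperplane, write $\Delta_k/\iota\cong B^{n_k}\times C(\RP^{m_k-1})$, and for $k\ge5$ argue via the boundary homology (the paper does this for $k=5$ in Proposition \ref{denver}, you phrase it uniformly via $S^{n_k-1}\ast\RP^{m_k-1}=\Sigma^{n_k}\RP^{m_k-1}$, which is the same computation). Part (c) and the ``$B^k$ for $k\le4$'' part of (a)--(b) are fine.

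There is, however, a genuine error in your treatment of part (a), and it propagates into how cleanly (b) is justified. You write that the half-face deformation retracts along the $-1$ direction ``onto the image of the $+1$ eigenspace, which lies in the boundary (the proper vertex of a half-edge and the full edge of a half-$2$-simplex, respectively).'' This identification is wrong. For $k=1$, the $+1$ eigenspace of $\iota$ on $\Delta_1$ is the barycenter, and its image in $\Delta_1/\iota\cong[0,\tfrac12]$ is the endpoint $\tfrac12$, which is the fold point, \emph{not} the proper vertex $0$. For $k=2$, the $+1$ eigenspace is the segment from the fixed vertex $v_1$ to the midpoint of the reversed edge $\{v_0,v_2\}$; its image in the half-triangle is the dashed fold axis in the paper's figure, \emph{not} the full edge. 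In the model $B^{n_k}\times C(\RP^{m_k-1})=B^{n_k}\times[0,1]$ (for $m_k=1$), the $+1$ eigenspace sits at $B^{n_k}\times\{0\}$ (the cone tips), while the combinatorial boundary --- the image of $\partial\Delta_k$, which is what ``its boundary'' must mean for (a) and (d) to make sense --- is $\bigl(S^{n_k-1}\times[0,1]\bigr)\cup\bigl(B^{n_k}\times\{1\}\bigr)$, disjoint from the fold except at the corners. So your proposed retraction lands on the wrong subspace, and in particular not on a subcomplex; that is exactly the retract you need in (d) in order to collapse h-$1$-cells to their proper vertex and h-$2$-cells to their full edge.

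What makes (a) true, and (b) false, is a dichotomy in the combinatorial boundary itself: the image of $\partial\Delta_k$ is the join $S^{n_k-1}\ast\RP^{m_k-1}$ in all cases. When $m_k=1$ (i.e.\ $k\le2$) this is $S^{n_k-1}\ast\{\mathrm{pt}\}=C(S^{n_k-1})\cong B^{n_k}$, a proper contractible subspace of the topological boundary $S^{k-1}$, onto which $B^k$ does deformation retract. When $m_k=2$ (i.e.\ $k=3,4$) the join is $S^{n_k-1}\ast S^1=S^{k-1}$, the \emph{whole} topological boundary, onto which no ball retracts. Your write-up should make this distinction explicit rather than appealing to the fold locus; in part (b) you in fact invoke ``its boundary sphere $S^{k-1}$'' without establishing that the image of $\partial\Delta_k$ is all of $S^{k-1}$ for $k=3,4$, a fact that falls straight out of the same join identification you already use for part (c).
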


\section{Identifying isogeny complexes for \texorpdfstring{$\ell$}{\unichar{"2113}}-polarized
abelian varieties as quotients of the building for
\texorpdfstring{$\Sp_{2g}(\Q_\ell)$}{Sp2g(\unichar{"211A}\unichar{"2113})}}
\label{quotient}

Let ($V$, $\langle\,\, ,\,\,\rangle$) be a $2g$-dimensional symplectic space
over $\Q_\ell$.  Let $W\subset V$ be a $\Z_\ell$-sublattice.  Recall
that there exists a basis of $W$ such that the symplectic form
decomposes as the sum of $g$ forms of the form $\begin{pmatrix} 0 &
   \ell^m \\ -\ell^m & 0 \end{pmatrix}$ with the $g$-tuple
$(m_i)=(m_1,m_2, \ldots, m_g)$ of $m$'s uniquely determined by $W$
(up to permutation).  We will call
$W$ an {\sf $(m_i)$-lattice}.  Also recall that the {\sf dual lattice}
$W^*$ is defined as $\{x\in V \mid \langle W, x \rangle \subset
\Z_\ell \}$.  If $W$ is an $(m_i)$-lattice then $\ell^aW$
is a $(2a+m_i)$-lattice and $W^*$ is a $(-m_i)$ lattice.  A
$((m+1)^n,(m)^{g-n})$-lattice will be called {\sf special} and a
$((1)^n,(0)^{g-n})$-lattice will be called {\sf extra
   special} of {\sf type} $n$.

Recall that the vertices of the building $\cB_g$ for
$\Sp_{2g}(\Q_\ell)=\Sp(V)$ are given by the extra special lattices.
We shall say a vertex corresponding to a $((1)^r,(0)^{g-r})$-lattice
$W$ has {\sf type} $r$.  Recall also that the $k$-faces of the
building are given by chains of vertices with proper inclusions
 $W_0\subsetneq
W_1\subsetneq\cdots\subsetneq W_{k}$ where we  call
$\bfr=(r_0,r_1,\ldots,r_{k})$ the {\sf type} of the face if $r_i$ is
the type of $W_i$. Note that $r_0>r_1>\cdots >r_k$.

Recall that $\Sp(V)$ acts on all these faces and hence the building
in the obvious way preserving the types of faces.  We shall take this
action to be on the left.

Finally recall that we can extend this action to an action of
$\GSp(V)$.  First identify $W$ with the {\sf generalized homothety}
set $\{\ell^aW\}\cup\{\ell^aW^*\}$.  Notice that: this operation
partitions the lattices into equivalence classes; if $W$ is special,
all the elements of the set are; and each special set has precisely
one extra special element.  Also the action of $\GSp(V)$ preserves
generalized homothety.  This induces an action of $\GSp(V)$ on the
building.  However, unlike the $\Sp(V)$ action which preserves types,
the elements of $\GSp(V)$ which scale by an odd power of $\ell$ send
faces of type $\bfr=(r_0,r_1,\ldots,r_{k})$ to faces of type
$\hat{\bfr}\colonequals (g-r_{k},g-r_{k-1},\ldots,g-r_0)$.  
Also since scalar multiplication
fixes generalized homothety, this action factors through $\PGSp(V)$.

\begin{theorem} \label{pav T} \textup{(cf. \cite[Theorem 21]{jz}.)}
Let $\cA_0=(\mathbf{A}_0,\lambda_0)$ be a principally 
polarized abelian variety of dimension $g$
over an algebraically closed field $k$ of characteristic $\charr k$
and $\ell\ne\charr k$ be a prime.  In the notation of \eqref{locust} and 
Definition 
\textup{\ref{enh}\eqref{enh1}} we have
\begin{equation}\label{U pav}
\wco^0_{\ell}(\cA_0)\cong\rU(\End(\mathbf{A}_0)[1/\ell])\bs\cB_g
\end{equation}
and
\begin{equation}\label{GU pav}
\co_{\ell}(\cA_0)\cong\GU(\End(\mathbf{A}_0)[1/\ell])\bs\cB_g .
\end{equation}
\end{theorem}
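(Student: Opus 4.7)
The plan is to extend the graph-level result \cite[Theorem 21]{jz} (the $g=1$ case, vertices and edges only) to the full $\Delta$-complex in dimension $g$ by producing an explicit $\Sp(V)$-equivariant bijection between the cells of the Bruhat-Tits building $\cB_g$ and the cells of $\wco^0_{\ell}(\cA_0)$, and then taking the appropriate quotients. The symplectic space over $\Q_\ell$ on which $\cB_g$ is built will be the rational $\ell$-adic Tate module $V\colonequals T_\ell(\mathbf{A}_0)\otimes_{\Z_\ell}\Q_\ell$, equipped with the symplectic form induced by the Weil pairing \eqref{Weil} associated to $\lambda_0$. Because $\lambda_0$ is principal, the base lattice $T_\ell(\mathbf{A}_0)$ is extra special of type $0$ and corresponds to the base vertex $[\cA_0]\in\wco^0_{\ell}(\cA_0)$.

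First I would establish the vertex correspondence. For any $\Z_\ell$-lattice $M\subset V$ commensurable with $T_\ell(\mathbf{A}_0)$, the standard dictionary between $\ell$-power isogenies and lattices in the rational Tate module produces an abelian variety $A_M$ with $T_\ell(A_M)=M$ together with an $\ell$-power isogeny to or from $\mathbf{A}_0$; the restriction of the symplectic form on $V$ to $M$ corresponds under this dictionary to a polarization $\lambda_M$ on $A_M$, and when $M$ is a $((1)^r,(0)^{g-r})$-lattice, $\ker(\lambda_M)\subseteq A_M[\ell]$ has order $\ell^{2r}$, so by Remark \ref{pol}\eqref{pol0} $\lambda_M$ is an $[\ell]$-polarization of type $r$. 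Thus the type-$r$ vertices of $\cB_g$ correspond, as an $\Sp(V)$-set, to isomorphism classes of type-$r$ $[\ell]$-polarized abelian varieties that are evenly $\ell$-power isogenous to $\mathbf{A}_0$, that is, to the type-$r$ vertices of $\wco^0_{\ell}(\cA_0)$.

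Next I would promote this correspondence to higher cells. A $k$-face of $\cB_g$ is a chain of proper inclusions $M_0\subsetneq M_1\subsetneq\cdots\subsetneq M_k$ of extra special lattices, necessarily of strictly decreasing types $r_0>r_1>\cdots>r_k$; by Proposition \ref{hawk} this matches, via the Tate-module dictionary, a composable sequence of strict $[\ell]$-isogenies $\A_0\to\A_1\to\cdots\to\A_k$ of type $\bfr=(r_0,\ldots,r_k)$, where each subquotient $M_i/M_{i-1}$ realizes the kernel $[\ell]$-subgroup of the corresponding strict isogeny. The face maps of $\cB_g$ (delete one lattice from the chain) match the face maps of Definition \ref{enh}\eqref{enh1} (compose two consecutive isogenies, or drop an endpoint), and the weights match because the automorphism group of a chain of strict isogenies coincides with the stabilizer in $\rU(\End(\mathbf{A}_0)[1/\ell])$ of the corresponding chain of lattices. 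Passing to isomorphism classes amounts to identifying $M$ with $gM$ for $g\in\rU(\End(\mathbf{A}_0)[1/\ell])\subset\Sp(V)$, which yields \eqref{U pav}.

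Finally, for the little complex \eqref{GU pav} I would match the dual involution $\iota$ on $\wco_{\ell}(\cA_0)$ with the $\GSp(V)/\Sp(V)$-action on $\cB_g$. Under the Tate-module dictionary, $\iota\colon\A\mapsto\hat{\A}$ sends the lattice $M$ to its $\omega$-dual $M^*$, which after an $\ell$-rescaling is again extra special but of complementary type $\hat{r}=g-r$; this is precisely the action on $\cB_g$ induced by any element $x\in\GU(\End(\mathbf{A}_0)[1/\ell])\setminus\rU(\End(\mathbf{A}_0)[1/\ell])$ whose similitude multiplier $x^\dagger x$ is an odd power of $\ell$, because such an $x$ acts on $V$ as a $\GSp$-similitude scaling the symplectic form by an odd power of $\ell$. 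Hence $\GU(\End(\mathbf{A}_0)[1/\ell])\bs\cB_g=(\rU(\End(\mathbf{A}_0)[1/\ell])\bs\cB_g)/\iota=\co_{\ell}(\cA_0)$. The main obstacle will be the careful bookkeeping of parity and connected components: one must verify that the two connected components of $\wco_{\ell}(\cA_0)$ (even versus odd $\ell$-power isogeny classes relative to $\cA_0$) correspond exactly to the two generalized homothety parities among extra special lattices, so that $\Sp(V)$, which preserves parity, only reconstructs the even component $\wco^0_{\ell}(\cA_0)$, while $\GSp(V)$, which can flip parity, folds the two components together precisely as $\iota$ does in forming $\co_{\ell}(\cA_0)$.
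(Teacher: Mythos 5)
Your proposal follows essentially the same route as the paper's proof: both work with the rational $\ell$-adic Tate module $V=\Ta_\ell(\mathbf{A}_0)\otimes\Q_\ell$, identify extra special lattices in $V$ with $[\ell]$-polarized abelian varieties via the standard isogeny--lattice dictionary (normalizing by powers of $\ell$), match chains of lattice inclusions with chains of strict isogenies, and then obtain \eqref{GU pav} from \eqref{U pav} by relating the dual involution $\iota$ to the generalized-homothety action of $\GSp$-elements with odd similitude exponent. The paper is more explicit about the $\ell^m$-scalings needed to realize arbitrary extra special lattices as kernels of isogenies and about verifying that two lattices give isomorphic objects if and only if they are related by an element of $\rU$ (respectively $\GU$), but the conceptual skeleton you describe is the same.
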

\begin{proof}
Let $W=\Ta_\ell(\mathbf{A}_0)$ be the Tate module of $\mathbf{A}_0$, and let
$V=\Ta_\ell(\mathbf{A}_0)\otimes\Q_\ell$, both equipped with the symplectic Weil
pairing.
Note that we have an exact sequence $$0\to W\to
V\xrightarrow{\pi}\mathbf{A}_0[\ell^\infty]\to0.$$ This gives us induced
injections $\rU(\End(\mathbf{A}_0)[1/\ell])\to\Sp(V)$ and
$\GU(\End(\mathbf{A}_0)[1/\ell])\to\GSp(V)$.

Let $\A_1$ be evenly $\ell$-isogenous to $\cA_0$ via
$\phi:\ell^{2m}\cA_0\to \A_1$.  We will associate to the pair
$(\phi,\A_1)$ the lattice $W_1=\ell^m\pi^{-1}(\ker \phi)\subset V$.
Since $\A_1$ is $[\ell]$-polarized, we can see that $W_1$ is extra
special of the same type as $\A_1$ by chasing through the matrices.
Conversely, if $W_1$ is extra special and we can pick an $m\in\NN$
such that $\ell^{-m}W_1\supset W$, then $\pi(\ell^{-m}W_1)$ is the
kernel of an $\ell$-power isogeny $\phi:\ell^{2m}\cA_0\to \A_1$ whose
image is $[\ell]$-polarized of the same type as $W_1$.  Furthermore,
picking a different representative corresponds to composing $\phi$
with multiplication by a scalar power of $\ell$.

Note that if $\psi:\A_0\to \A_1$ is an isogeny of type $(r_0,r_1)$,
$r_0>r_1$,
then 
\[
W_1=\ell^m\pi^{-1}(\ker \psi\circ\phi)\supsetneq W_0
\] 
is an 
extra-special lattice of type $r_1$.  Hence the chain $W_0\subsetneq W_1$ is an
edge of type $(r_0,r_1)$.  The preceding can clearly be applied to
each isogeny in a chain to obtain higher faces.  Conversely, if the
chain $W_0\subsetneq W_1$ is an edge of type $(r_0,r_1)$ with
$\ell^{-m}W_0\supset W$ and $\pi(\ell^{-m}W_i)$ the kernel of
$\phi_i:\ell^{2m}\cA_0\to \A_i$, we can decompose
$\phi_1=\psi\circ\phi_0$ with $\psi:\A_0\to \A_1$ an isogeny of
type $(r_0,r_1)$.  Again the preceding can be applied to higher faces
to obtain a chain of isogenies.

Now we show that $W^\prime$ and $W^{\prime\prime}$ correspond to
isomorphic $[\ell]$-polarized abelian varieties if and only if
$W^\prime=\psi W^{\prime\prime}$ for some
$\psi\in\rU(\End(\mathbf{A}_0)[1/\ell])$.  First assume we are given $\psi$.
Pick an $m\in\NN$ such that $\ell^m\psi\in\End(\mathbf{A}_0)$, and an $n$ such
that $\ell^{-n}W^\prime,\,\ell^{-n}W^{\prime\prime}\supset W$.
Therefore
$\pi(\ell^{-n}W^\prime)=\ell^m\psi(\pi(\ell^{-n}W^{\prime\prime}))$.
Hence if $\ker \phi^\prime=\pi(\ell^{-n}W^\prime)$ and $\ker
\phi^{\prime\prime}=\pi(\ell^{-n}W^{\prime\prime})$, then $\phi^\prime
\circ \ell^m\psi = \phi^{\prime\prime}$ and both have the same codomain.

Conversely, suppose $\phi^\prime$ and $\phi^{\prime\prime}$ (using
scalings $m'$ and $m^{\prime\prime}$) induced from $W^\prime$ and
$W^{\prime\prime}$ have the same codomain.  Let $\phi$ be such that
$\phi\circ\phi^\prime=\ell^{2m'}$ let
$\psi=\ell^{-m'-m^{\prime\prime}}\phi\circ\phi^{\prime\prime}$.  Note
that $\psi\in\rU(\End(\mathbf{A}_0)[1/\ell])$ since it preserves the polarization
up to scaling, and chasing through we see that the scale factor is
$1$.  Chasing through the equations we furthermore see that $W^\prime=\psi
W^{\prime\prime}$.

The same logic can be applied to the higher faces thus showing
\eqref{U pav}.

We will now show \eqref{GU pav}.  First suppose we are given
$\psi\in\GU(\End(\mathbf{A}_0)[1/\ell])$ such that $[W^\prime]=[\psi
  W^{\prime\prime}]$ as generalized homothety classes.  If
$\ell^aW^\prime=\psi W^{\prime\prime}$, then $\psi$ is a scalar
multiple of an element of $\rU(\End(\A)[1/\ell])$.  Since that case
was dealt with above, we may assume that $\ell^aW^{\prime*}=\psi
W^{\prime\prime}$.  After possible scaling $W^\prime$,
$W^{\prime\prime}$, and $\psi$ by powers of $\ell$ we may assume that
$\psi\in\End(\A)$, $W^{\prime*}=\psi W^{\prime\prime}$, and
$W^{\prime*},\,\ell^bW^\prime,\,W^{\prime\prime}\supset W$.  Therefore
$\pi(W^{\prime*})=\psi(\pi(W^{\prime\prime}))$.  Hence if $\ker
\phi=\pi(W^{\prime*})$, $\ker \phi^\prime=\pi(\ell^bW^\prime)$, and $\ker
\phi^{\prime\prime}=\pi(W^{\prime\prime})$, then $\phi \circ \psi =
\phi^{\prime\prime}$ and both have the same codomain which is the dual
of the codomain of $\phi^\prime$.

Conversely, if $\phi^\prime$ and $\phi^{\prime\prime}$ induced from
$W^\prime$ and $W^{\prime\prime}$ have dual codomains, let
$\psi=\widehat{\phi^\prime}\circ \phi^{\prime\prime}$.  Note that
$\psi\in\GU(\End(\mathbf{A}_0)[1/\ell])$ since it preserves the polarization up
to scaling, and chasing through we see that the scale factor is $1$.
And chasing through the equations we see that $\ell^aW^{\prime*}=\psi
W^{\prime\prime}$ for some $a\in\Z$.

As before, the same logic can be applied to the higher faces thus showing
\eqref{GU pav}.

\end{proof}

\section{Mass formulas}
\label{donut}

We now return to the special case of superspecial abelian varieties.
If the principally polarized base
abelian variety $\cA_0=(\mathbf{A}_0,\lambda_0)$ has $\mathbf{A}_0$
superspecial in characteristic $p$, then for primes $\ell\neq p$ the
isogeny complexes $\wco_{\ell}(\cA_0)$ and $co_{\ell}(\cA_0)$ enjoy
important properties not necessarily present in the general case:
\begin{enumerate}[\upshape (a)]
\item
In the superspecial case, the isogeny complexes are finite.
\item
In the superspecial case, the isogeny complexes are connected.
\item
In the superspecial case, the isogeny complexes can be described in
terms of hermitian forms over definite quaternion algebras -- this is 
developed in our notion of {\sf Brandt complexes} in Section
\ref{brandtc}.  In particular, there are mass formulas for the isogeny
complexes arising from the definite quaternion algebra description; this
is the subject of the present Section \ref{donut}.
\end{enumerate}

Computing the number of simplexes of a given type in the isogeny
complex of a superspecial abelian variety is in general hard; however,
it's relatively easy to compute their mass.

\begin{definition}
The {\sf mass} of a set $S$ of $k$-simplexes in a weighted simplicial
complex is equal to $$\sum_{\sigma\in S}\frac{1}{w(\sigma)}.$$

Let $\cA_0$ be a principally polarized superspecial abelian variety of
dimension $g$ in characteristic $p$.  We will let
$m_g([\ell],p)_{\bfr}$ be the mass of the set of
$k$-simplexes of type $\bfr=(r_0,r_1,\ldots,r_{k})$ in the enhanced
$[\ell]$-isogeny complex $\wco^0_{\ell}(\cA_0)$.
\end{definition}

\begin{theorem}\label{mass1}\textup{(Mass formula of Ekedahl 
and Hashimoto/Ibukiyama)}
\[
m_g([\ell],p)_{(0)}=M_g(p):=\frac{(-1)^{g(g+1)/2}}{2^g}\left\{
\prod_{k=1}^g \zeta(1-2k)\right\}\cdot\prod_{k=1}^g\{p^k+(-1)^k\}.
\]
\end{theorem}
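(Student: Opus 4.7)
The plan is to reduce the theorem to the classical mass formula for principally polarized superspecial abelian varieties by establishing
\[
m_g([\ell],p)_{(0)}=\sum_{[\A]\in\sSgpz}\frac{1}{\#\Aut(\A)},
\]
and then invoking the formulas of Ekedahl (for small $g$) and Hashimoto--Ibukiyama (for general $g$) for the right-hand side.

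First, I would unwind definitions. By Definition \ref{enh}\eqref{enh1}, the type $0$ vertices of $\wco^0_{\ell}(\cA_0)$ are exactly the $\bFp$-isomorphism classes $[\A]$ of principally polarized abelian varieties evenly $\ell$-power isogenous to $\cA_0$, each weighted by $1/\#\Aut(\A)$ as required by the definition of mass. Since separable isogenies preserve superspeciality (see \cite[p.~36]{Oo} as invoked in Proposition \ref{beaten}), every such $[\A]$ lies in $\sSgpz$, giving an injection from the type $0$ vertices into $\sSgpz$. To show surjectivity I would apply Theorem \ref{pav T}\eqref{U pav}: under the isomorphism $\wco^0_{\ell}(\cA_0)\cong\rU(\End(\mathbf{A}_0)[1/\ell])\bs\cB_g$, the type $0$ vertices correspond to orbits of the quaternionic unitary group on self-dual $\Z_\ell$-lattices in $\Ta_\ell(\mathbf{A}_0)\otimes\Q_\ell$. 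By the classical dictionary (Deligne--Ogus--Shioda/Ekedahl) between such orbits and the class set of the principal genus of rank-$g$ hermitian forms over $\OO_{\H_p}$, these orbits are in weight-preserving bijection with $\sSgpz$.

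With the identification of $m_g([\ell],p)_{(0)}$ as the classical mass in hand, the displayed formula is precisely the Ekedahl--Hashimoto--Ibukiyama mass formula, obtained by applying Siegel's mass formula to the definite quaternion hermitian group of rank $g$ and computing the local densities at the ramified prime $p$ and at the archimedean place. The main obstacle lies not in the identification step---which is essentially formal once Theorem \ref{pav T} is available---but in the underlying quaternion-arithmetic mass computation of Hashimoto--Ibukiyama, which we simply cite.
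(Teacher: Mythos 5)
Your proposal is essentially the same as the paper's proof, which is simply the citation ``See \cite[p.~159]{Ek} and \cite[Prop.~9]{HI}, cf. \cite[Thm.~3.1]{Y}.'' The paper treats the identification of $m_g([\ell],p)_{(0)}$ with the classical mass $\sum_{[\A]\in\sSgpz}1/\#\Aut(\A)$ as understood from the construction of $\wco^0_\ell(\cA_0)$; you spell this out (via Theorem \ref{pav T} and the hermitian-form dictionary) before citing the same references, which is a correct and reasonable unpacking rather than a different route.
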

\begin{proof}
See \cite[p.~159]{Ek} and \cite[Prop.~9]{HI}, cf.
\cite[Thm.~3.1]{Y}.
\end{proof}

\begin{proposition}\label{mass2}
\[
m_g([\ell],p)_{(r_0,\ldots,r_{k-1},r_{k})}=m_g([\ell],p)_{(r_0,r_1,\ldots,r_{k-1})}
N(\ell)_{r_{k-1},r_{k}}
\]
where $N(\ell)_{r_{k-1},r_{k}}$ is as given in \eqref{fly} and
Proposition \textup{\ref{mouse}}.
\end{proposition}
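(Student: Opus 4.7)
The plan is to apply an orbit-counting (mass) argument to the forgetful face map $\Fa_k$ that sends a $k$-simplex $\sigma\in T_k$ of type $(r_0,\ldots,r_k)$, represented by a chain $\A_0\xrightarrow{f_1}\cdots\xrightarrow{f_k}\A_k$, to its face $\tau=\Fa_k(\sigma)\in T_{k-1}$ of type $(r_0,\ldots,r_{k-1})$ obtained by omitting the final object and arrow; here I write $T_j$ for the set of isomorphism classes of $j$-simplexes of the relevant type in $\wco^0_\ell(\cA_0)$. The first step is to observe that the natural restriction $\Aut(\sigma)\to\Aut(\tau)$ is injective: if $(\alpha_0,\ldots,\alpha_k)\in\Aut(\sigma)$ projects to the identity in $\Aut(\tau)$, the compatibility $\alpha_k\circ f_k=f_k\circ\alpha_{k-1}=f_k$ forces $\alpha_k=\mathrm{id}$ because $f_k$ is an epimorphism.

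Next, fix a representative of $\tau$ with endpoint $\A_{k-1}$. By Proposition \ref{hawk}, strict isogenies $f_k\colon\A_{k-1}\to\A_k$ of type $(r_{k-1},r_k)$, taken up to post-composition by automorphisms of $\A_k$, are in bijection via $f_k\mapsto \ker f_k$ with $[\ell]$-subgroups $C\subseteq\A_{k-1}$ of order $\ell^{r_{k-1}-r_k}$; this set is $\Iso_\ell(\A_{k-1})_{r_k}$ of cardinality $N(\ell)_{r_{k-1},r_k}$ by \eqref{fly} and Proposition \ref{wren}. Two extensions $f_k,\,f_k'$ of $\tau$ yield isomorphic $k$-chains if and only if there is some $\alpha\in\Aut(\tau)$ with $\alpha_{k-1}(\ker f_k)=\ker f_k'$, so the fiber $\Fa_k^{-1}(\tau)$ is in bijection with the $\Aut(\tau)$-orbits on $\Iso_\ell(\A_{k-1})_{r_k}$, where $\Aut(\tau)$ acts through projection to the last coordinate. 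For the $k$-simplex $\sigma$ corresponding to an orbit representative $C$, the injection $\Aut(\sigma)\hookrightarrow\Aut(\tau)$ identifies $\Aut(\sigma)$ with $\mathrm{Stab}_{\Aut(\tau)}(C)$, so that the orbit of $C$ has size $|\Aut(\tau)|/|\Aut(\sigma)|$.

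Summing orbit sizes over the fiber gives
\[
\sum_{\sigma\in\Fa_k^{-1}(\tau)}\frac{|\Aut(\tau)|}{|\Aut(\sigma)|}=\#\Iso_\ell(\A_{k-1})_{r_k}=N(\ell)_{r_{k-1},r_k},
\]
and dividing by $|\Aut(\tau)|$ and summing over $\tau\in T_{k-1}$ yields
\[
m_g([\ell],p)_{(r_0,\ldots,r_k)}=N(\ell)_{r_{k-1},r_k}\cdot m_g([\ell],p)_{(r_0,\ldots,r_{k-1})},
\]
as required.

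The main technical point is the identification $\Aut(\sigma)\cong\mathrm{Stab}_{\Aut(\tau)}(\ker f_k)$. This rests on a diagram chase using the universal property of the quotient isogeny $f_k$: any $\alpha_{k-1}\in\Aut(\A_{k-1})$ preserving $\ker f_k$ lifts uniquely to an automorphism $\alpha_k$ of the target satisfying $\alpha_k\circ f_k=f_k\circ\alpha_{k-1}$, and conversely every automorphism of $\sigma$ restricts to such a stabilizing element of $\Aut(\tau)$. Once this bookkeeping is in place the rest of the proof is a clean double-counting identity.
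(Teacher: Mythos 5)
Your proof is correct and follows essentially the same orbit-counting argument as the paper: both count extensions of a $(k-1)$-simplex by one more arrow, identify extensions with $[\ell]$-subgroups via Proposition~\ref{hawk}, and use orbit-stabilizer to convert the count into the mass identity. The only difference is that you spell out the identification $\Aut(\sigma)\cong\mathrm{Stab}_{\Aut(\tau)}(\ker f_k)$ explicitly, which the paper uses implicitly in the step $\#\Orb_{G(\sigma)}(f_k)/\#G(\sigma)=1/\#G(\sigma[f_k])$.
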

\begin{proof}
Let $\sigma$ be the $(k-1)$-simplex of type $(r_0,r_1,\ldots,r_{k-1})$
corresponding to the
sequence $$\A_0\xrightarrow{f_1}\A_1\xrightarrow{f_2}
\cdots\xrightarrow{f_{k-1}}\A_{k-1}.$$ Let $G(\sigma)$ be the automorphism
group of this sequence, so $w(\sigma)=\#G(\sigma)$.

By Definition \eqref{fly}, there are $N(\ell)_{r_{k-1},r_{k}}$ maps
$f_k\in\Iso_\ell(\A_{k-1})_{r_{k}}$ from $\A_{k-1}$ to some $\A_{k}$ of type
$r_k$.  Two such $f_k$'s extend $\sigma$ to the same $k$-simplex
$\sigma[f_k]$ if and only if there exists an 
element $g\in G(\sigma)$ sending one
to the other.  Let $\Orb_{G(\sigma)}(f_k)$ be the orbit under
$G(\sigma)$ of $f_k$. We  have $$\frac{N(\ell)_{r_{k-1},r_{k}}}{w(\sigma)}=
\sum_{f_k\in\Iso_\ell(\A_{k-1})_{r_k}}\frac1{\#G(\sigma)}=
\sum_{\sigma[f_k]}\frac{\#\Orb_{G(\sigma)}(f_k)}{\#G(\sigma)}=
\sum_{\sigma[f_k]}\frac1{\#G(\sigma[f_k])}=
\sum_{\sigma[f_k]}\frac1{w(\sigma[f_k])}.$$ Summing the above over all
$(k-1)$-simplexes of type $(r_0,r_1,\ldots,r_{k-1})$ gives the result.
\end{proof}

\begin{proposition}\label{mass3}
\[
m_g([\ell],p)_{(r_0,r_1,\ldots,r_k)}=m_g([\ell],p)_{(\hat{r}_k,\ldots,\hat{r}_1,
\hat{r}_0)}.
\]
\end{proposition}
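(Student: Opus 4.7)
The plan is to exhibit a weight-preserving bijection between the set of $k$-simplexes of type $\bfr=(r_0,\ldots,r_k)$ and the set of $k$-simplexes of type $\hbr=(\hat r_k,\ldots,\hat r_0)$ in $\wco^0_\ell(\cA_0)$, and then read off the equality of masses directly from the definition.

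The natural candidate is the dual map $\iota$ introduced in Definition~\ref{enh}\eqref{dual}. First I would verify that $\iota$ in fact restricts to an involution of the connected component $\wco^0_\ell(\cA_0)$ in the superspecial setting. This is where I would invoke Theorem~\ref{ghostly}(1): since $\wgr_{\!g}([\ell],p)$ is connected and contains all $[\ell]$-polarized superspecial vertices, the superspecial complex $\wco_\ell(\cA_0)$ itself is already connected, so $\wco^0_\ell(\cA_0)=\wco_\ell(\cA_0)$ and the NOTE about $\iota$ possibly swapping two components is vacuous. Thus $\iota$ is a genuine involution of $\wco^0_\ell(\cA_0)$.

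Next, Definition~\ref{enh}\eqref{dual} tells us that $\iota$ sends a $k$-simplex of type $\bfr$ to a $k$-simplex of type $\hbr$, so it restricts to a bijection $\iota:\Sigma_{\bfr}\xrightarrow{\;\sim\;}\Sigma_{\hbr}$. The key observation to check is that $\iota$ preserves the weight function $\w$, i.e.\ that for any sequence
\[
\sigma:\ \A_0\xrightarrow{f_1}\A_1\xrightarrow{f_2}\cdots\xrightarrow{f_k}\A_k
\]
the automorphism group of $\sigma$ has the same cardinality as the automorphism group of its dual $\iota(\sigma):\hat\A_k\xrightarrow{\hat f_k}\cdots\xrightarrow{\hat f_1}\hat\A_0$. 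This follows because dualization $X\mapsto\hat X$ is an anti-equivalence on polarized abelian varieties (Proposition~\ref{owl} shows it carries strict isogenies to strict isogenies and is its own inverse up to canonical isomorphism), hence induces a group isomorphism between $\Aut(\sigma)$ and $\Aut(\iota(\sigma))$ by componentwise dualization of $(\alpha_0,\ldots,\alpha_k)\in\prod_i\Aut(\A_i)$ commuting with the $f_i$'s. Thus $\w(\iota(\sigma))=\w(\sigma)$.

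Granted these two points, the conclusion is immediate from the definition of mass:
\[
m_g([\ell],p)_{\bfr}=\sum_{\sigma\in\Sigma_{\bfr}}\frac{1}{\w(\sigma)}
=\sum_{\sigma\in\Sigma_{\bfr}}\frac{1}{\w(\iota(\sigma))}
=\sum_{\tau\in\Sigma_{\hbr}}\frac{1}{\w(\tau)}=m_g([\ell],p)_{\hbr}.
\]
The main (minor) obstacle is the bookkeeping check that $\iota$ truly preserves weights; once that categorical fact is in hand, everything collapses to relabeling. There is no real difficulty beyond making sure that in the superspecial case we are working with a single connected component so that $\iota$ does not accidentally land outside $\wco^0_\ell(\cA_0)$.
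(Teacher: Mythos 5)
Your proposal is correct and is essentially the paper's proof, which says in one sentence that the dual map $\iota$ sends $k$-simplexes of type $\bfr$ to $k$-simplexes of type $\hat{\bfr}$ and preserves weights. You have merely filled in the justifications that the paper leaves implicit: that in the superspecial case the complex is connected (so $\iota$ genuinely acts on $\wco^0_\ell(\cA_0)$), and that weight preservation follows from dualization being an anti-equivalence.
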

\begin{proof}
The dual map $\iota$ send $k$-simplexes of type
$\bfr=(r_0,r_1,\ldots,r_k)$ to $k$-simplexes of type
$\hat{\bfr}=(\hat{r}_k,\ldots,\hat{r}_1,\hat{r}_0)$ and preserves weights.
\end{proof}

\begin{corollary}
\[
m_g([\ell],p)_{(r)}=M_g(p)\frac{N(\ell)_{g,\hat{r}}}{N(\ell)_{r,0}}.
\]
\end{corollary}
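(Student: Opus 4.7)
The plan is to chain the three preceding mass results by passing through the $1$-simplex of type $(g,\hat r)$ and its dual $1$-simplex of type $(r,0)$.

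First I would establish the anchor identity $m_g([\ell],p)_{(g)}=M_g(p)$. Applying Proposition~\ref{mass3} at the $0$-simplex level gives $m_g([\ell],p)_{(g)}=m_g([\ell],p)_{(\hat g)}=m_g([\ell],p)_{(0)}$, and the right-hand side equals $M_g(p)$ by Theorem~\ref{mass1}.

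Next, for $0<r<g$ (so that both tuples $(g,\hat r)$ and $(r,0)$ are legitimate strictly-decreasing $1$-simplex types) I would compute $m_g([\ell],p)_{(g,\hat r)}$ in two different ways. Proposition~\ref{mass2} applied to the base $0$-simplex of type $(g)$ gives
\[
m_g([\ell],p)_{(g,\hat r)}=m_g([\ell],p)_{(g)}\cdot N(\ell)_{g,\hat r}=M_g(p)\cdot N(\ell)_{g,\hat r}.
\]
On the other hand, Proposition~\ref{mass3} at the $1$-simplex level identifies $m_g([\ell],p)_{(g,\hat r)}$ with $m_g([\ell],p)_{(r,0)}$, since the dual of $(g,\hat r)$ is $(\widehat{\hat r},\hat g)=(r,0)$; and a second application of Proposition~\ref{mass2}, this time to the base $0$-simplex of type $(r)$, yields
\[
m_g([\ell],p)_{(r,0)}=m_g([\ell],p)_{(r)}\cdot N(\ell)_{r,0}.
\]
Equating the two expressions for $m_g([\ell],p)_{(g,\hat r)}=m_g([\ell],p)_{(r,0)}$ and dividing by $N(\ell)_{r,0}$ produces the claim.

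The boundary cases $r=0$ and $r=g$ are not directly covered by the above argument, because the intermediate $1$-simplex type degenerates to a constant sequence. But they reduce trivially: for $r=0$ the formula reads $M_g(p)\cdot N(\ell)_{g,g}/N(\ell)_{0,0}$, which equals $M_g(p)$ under the convention $N(\ell)_{k,k}=1$ stated before Proposition~\ref{mouse}, and this matches Theorem~\ref{mass1} directly; the case $r=g$ is identical by virtue of the anchor identity in the first paragraph. Honestly there is no real obstacle here—the corollary is pure bookkeeping between Theorem~\ref{mass1}, Proposition~\ref{mass2}, and Proposition~\ref{mass3}—and the only care needed is to check consistency at the endpoints via the $N(\ell)_{k,k}=1$ convention.
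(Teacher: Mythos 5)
Your proof is correct and follows essentially the same route as the paper: chain $m_g([\ell],p)_{(0)}N(\ell)_{g,\hat r}=m_g([\ell],p)_{(g)}N(\ell)_{g,\hat r}=m_g([\ell],p)_{(g,\hat r)}=m_g([\ell],p)_{(r,0)}=m_g([\ell],p)_{(r)}N(\ell)_{r,0}$ via Propositions \ref{mass2} and \ref{mass3} and divide. The only thing you add is the explicit endpoint check for $r=0$ and $r=g$ via the convention $N(\ell)_{k,k}=1$, which the paper leaves implicit.
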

\begin{proof}
Using Propositions \ref{mass2} and \ref{mass3} we
compute 
\begin{align*}
m_g([\ell],p)_{(0)}N(\ell)_{g,\hat{r}}&=
m_g([\ell],p)_{(g)}N(\ell)_{g,\hat{r}}= m_g([\ell],p)_{(g,\hat{r})}\\
&=m_g([\ell],p)_{(r,0)}= m_g([\ell],p)_{(r)}N(\ell)_{r,0}.
\end{align*}
\end{proof}

Combining the above results we get:

\begin{theorem}
\label{gutter}
\[
m_g([\ell],p)_{(r_0,r_1,\ldots,r_{k})}=
M_g(p)\frac{N(\ell)_{g,\hat{r}_0}}{N(\ell)_{r_0,0}}\prod_{i=1}^kN(\ell)_{r_{i-1},r_{i}}.
\]
\end{theorem}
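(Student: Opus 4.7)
The plan is to prove Theorem \ref{gutter} by a direct induction on $k$, combining the corollary computing $m_g([\ell],p)_{(r)}$ for a single type with the recursion of Proposition \ref{mass2}. Everything needed is already in place; no new geometric input is required.

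For the base case $k=0$, the formula reduces to
\[
m_g([\ell],p)_{(r_0)} = M_g(p)\,\frac{N(\ell)_{g,\hat{r}_0}}{N(\ell)_{r_0,0}},
\]
which is precisely the statement of the corollary immediately preceding the theorem. So there is nothing to do at the base.

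For the inductive step, assume the formula holds for all types of length $k$ and take a type $\bfr=(r_0,r_1,\ldots,r_{k})$ of length $k+1$. By Proposition \ref{mass2} applied to the truncation $(r_0,\ldots,r_{k-1})$, I have
\[
m_g([\ell],p)_{(r_0,r_1,\ldots,r_{k})} = m_g([\ell],p)_{(r_0,r_1,\ldots,r_{k-1})} \cdot N(\ell)_{r_{k-1},r_{k}}.
\]
Substituting the inductive hypothesis for the first factor gives
\[
m_g([\ell],p)_{(r_0,\ldots,r_{k})} = M_g(p)\,\frac{N(\ell)_{g,\hat{r}_0}}{N(\ell)_{r_0,0}}\left(\prod_{i=1}^{k-1}N(\ell)_{r_{i-1},r_{i}}\right) N(\ell)_{r_{k-1},r_{k}},
\]
which collapses to the claimed product $\prod_{i=1}^{k} N(\ell)_{r_{i-1},r_{i}}$, completing the induction. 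Proposition \ref{mass3} is not logically needed at this point (it was already used in the derivation of the corollary), so I would simply cite it in passing to motivate the asymmetric appearance of $\hat{r}_0$ in the prefactor.

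There is no real obstacle: the theorem is a bookkeeping consequence of the recursion in Proposition \ref{mass2} seeded by the corollary. The only minor care is to make sure the induction is set up cleanly so that the empty product $\prod_{i=1}^{0}$ is interpreted as $1$, matching the base case $k=0$.
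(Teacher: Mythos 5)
Your proposal is correct and matches the paper's approach exactly: the paper proves the theorem by ``combining the above results,'' which is precisely the iteration of Proposition \ref{mass2} seeded by the preceding corollary that you carry out (with the induction made explicit). Nothing further is needed.
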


\section{The Brandt complex of a definite rational quaternion
algebra}
\label{brandtc}

There are several ways to define the Brandt complex of a definite
rational quaternion algebra.  Here we present the one we used in our
computations.

Let $\HH$ be a definite rational quaternion algebra and $\cO_{\HH}\subset \HH$ a
maximal order. Recall that a {\sf positive definite hermitian form}
over $\cO_{\HH}$ is a projective left $\cO_{\HH}$-module $M$ of rank $g$ with a
$\Z$-linear pairing $(\,\,\, ,\,\,)$ into $\cO_{\HH}$ satisfying
\begin{enumerate}
\item $(\alpha x,y)=\alpha(x,y)$,
\item $(x,y)=\overline{(y,x)}$,
\item $(x,x)\ge0$ with equality only if $x=0$
\end{enumerate}
for all $\alpha\in\cO_{\HH}$ and $x,y\in M$.

Given such a form recall that its {\sf dual} $M^*$ is defined as
$\{x\in M\otimes\HH \mid (M,x)\subset\cO_{\HH}\}$.

\begin{definition}
For a prime $\ell$ unramified in $\HH$, a positive definite hermitian
form $M$ is {\sf $\ell$-bounded}  if
\begin{equation}
\label{ls}
\ell M^*\subset M.
\end{equation}
\end{definition}

Observe that by comparing the reduced discriminant of both sides of
equation \eqref{ls} we conclude that the reduced discriminant
$\disc(M)=\ell^n$ for some integer $0\le n\le g$.  Given 
$M,\,(\,\,\,,\,\,)$ an
$\ell$-bounded positive definite hermitian form, we define the {\sf
  scaled dual} form (denoted $\widehat{M}$) to be $M^*,\,
\ell(\,\,\,,\,\,)$.

\begin{remark1}\label{sf}
{\rm

There are two ways to think about $\ell$-bounded forms.
\begin{enumerate}[\upshape (a)]
\item\label{esl} Recall that all positive definite $g$-dimensional hermitian
  forms over $\HH$ are isomorphic. Thus we can fix the form 
$(\,\,\,,\,\, )$
  on $\HH^g\cong M\otimes\HH$ and vary $M\subset \HH^g$.  In fact we can
  simultaneously fix $M\otimes\Z[1/\ell]=\cO_{\HH}[1/\ell]^g$.
\item Also recall that for $g>1$ all projective rank $g$ $\cO_{\HH}$-modules
  are isomorphic. Thus for $g>1$ we can alternatively fix
  $M\cong\cO_{\HH}^g$ and vary the $\ell$-bounded hermitian form 
$(\,\,\, ,\,\,)$, given
  by $(x,y)=y^\dagger Hx$ for a hermitian matrix $H$.
\end{enumerate}

}
\end{remark1}

We will now define the {\sf enhanced Brandt complex} 
$\wbrc_{g}(\ell,\cO_{\HH})$ of
  $\cO_{\HH}$ with respect to the prime $\ell$ of dimension $g$.  The
vertices of $\wbrc_{g}(\ell,\cO_{\HH})$ correspond to the isomorphism classes of
$\ell$-bounded positive definite hermitian forms $M$ over $\cO_{\HH}$ of
rank $g$.  If the reduced discriminant $\disc(M)=\ell^n$ we will say
the corresponding vertex has {\sf type $n$}.

The $k$-faces of the enhanced Brandt complex correspond to chains of
$\ell$-bounded positive definite hermitian forms $M_0\subsetneq
M_1\subsetneq \ldots\subsetneq M_{k}$ with the same pairing $(\,\,\,,\,\,\,)$.  
Such a
face will be said to be of {\sf type $\mathbf{n}\colonequals
(n_0,n_1,\ldots,n_{k})$} if
$\disc(M_i)=\ell^{n_i}$.

The {\sf little Brandt complex}  $\brc_{g}(\ell,\cO_{\HH})$ is obtained from the
enhanced Brandt complex by quotienting by the involution corresponding
to taking scaled duals.

As preparation for the proof of the theorem below, the
{\sf adjugate} (or {\sf classical adjoint})
 $\Adjj(M)$ of an invertible $n\times n$-matrix $M$ is the $n\times n$
matrix with $M\Adjj(M)=\det M \Id_{n\times n}$.

\begin{theorem}\label{Brandt T} The enhanced Brandt complex $\wbrc_{g}(\ell,\cO_{\HH})$ is
isomorphic to
\begin{equation}\label{Brandt U}
\rU_g(\cO_{\HH}[1/\ell])\bs\cB_g ,
\end{equation}
and the little Brandt complex $\brc_{\!g}(\ell,\cO_{\HH})$ is isomorphic to
\begin{equation}\label{Brandt GU}
\GU_g(\cO_{\HH}[1/\ell])\bs\cB_g .
\end{equation}
\end{theorem}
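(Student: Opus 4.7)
The plan is to prove Theorem \ref{Brandt T} by constructing a direct, type-preserving bijection between $\ell$-bounded hermitian forms over $\cO_{\HH}$ and extra special lattices in a fixed symplectic $\Q_\ell$-space $V$ of dimension $2g$, and then identifying the resulting group quotients. The proof parallels that of Theorem \ref{pav T} but replaces the Tate-module input with Morita equivalence at $\ell$. The key local fact is that since $\ell$ is unramified in $\HH$, we have $\HH\otimes\Q_\ell\cong\Mat_{2\times 2}(\Q_\ell)$ and $\cO_{\HH}\otimes\Z_\ell\cong\Mat_{2\times 2}(\Z_\ell)$, so Morita equivalence converts a rank-$g$ projective hermitian $\cO_{\HH}\otimes\Z_\ell$-module into a rank-$2g$ free $\Z_\ell$-module equipped with a nondegenerate symplectic form (obtained from the hermitian form via the reduced trace), with the canonical quaternionic involution inducing the standard symplectic pairing.

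First I would adopt the convention of Remark \ref{sf}, part (a): fix the ambient space $(\HH^g,(\,,\,))$ with its standard positive definite hermitian form and require $M\otimes\Z[1/\ell]=\cO_{\HH}[1/\ell]^g$. Then $M$ is determined by its $\ell$-adic completion $M_\ell\subset\HH^g\otimes\Q_\ell$, and the Morita dictionary sends $M_\ell$ to a $\Z_\ell$-lattice $W(M)$ inside the fixed symplectic $\Q_\ell$-space $V$. I would then verify two local compatibilities: that $\ell$-boundedness $\ell M^*\subset M$ is equivalent to $\ell W(M)^*\subset W(M)$, i.e.\ to $W(M)$ being extra special in the sense of Section \ref{quotient}; and that $\disc(M)=\ell^n$ forces $W(M)$ to have symplectic type $n$. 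The adjugate identity $H\,\Adjj(H)=\det(H)\Id$ is the main bookkeeping device. Proper inclusions transport across the dictionary, so chains of $\ell$-bounded hermitian forms of type $(n_0,\ldots,n_k)$ correspond to chains of extra special lattices of type $(n_0,\ldots,n_k)$.

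For the group action and \eqref{Brandt U}, observe that $\rU_g(\cO_{\HH}[1/\ell])$ is precisely the subgroup of $\rU_g(\HH):=\rU(\HH^g)$ that preserves the fixed $\Z[1/\ell]$-localization $\cO_{\HH}[1/\ell]^g$, and under Morita it embeds into $\Sp(V)$. Two $\ell$-bounded hermitian forms $M$ and $M'$ with common away-from-$\ell$ localization are isomorphic over $\cO_{\HH}$ iff there exists an $\HH$-linear isometry $\phi\in\rU_g(\HH)$ with $\phi(M)=M'$; any such $\phi$ automatically preserves $\cO_{\HH}[1/\ell]^g$, so $\phi\in\rU_g(\cO_{\HH}[1/\ell])$. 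Combined with the transitivity of $\Sp(V)$ on extra special lattices of a fixed type, this yields the bijection \eqref{Brandt U}, and the same orbit argument applied to chains promotes the vertex bijection to an isomorphism of $\Delta$-complexes.

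For \eqref{Brandt GU}, the involution $M\mapsto\widehat M=(M^*,\ell(\,,\,))$ on $\wbrc_g(\ell,\cO_{\HH})$ corresponds under the dictionary to the generalized-homothety operation $W\mapsto\ell W^*$, which is exactly the $\GSp(V)/\Sp(V)$-symmetry described just before Theorem \ref{pav T}. Elements of $\GU_g(\cO_{\HH}[1/\ell])\setminus\rU_g(\cO_{\HH}[1/\ell])\cdot\ell^{\Z}$ realize these extra symmetries globally, so the $\iota$-quotient of $\wbrc_g(\ell,\cO_{\HH})$ coincides with $\GU_g(\cO_{\HH}[1/\ell])\bs\cB_g$. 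The main obstacle is the Morita-equivalence bookkeeping: verifying that the canonical quaternionic involution on $\cO_{\HH}\otimes\Z_\ell\cong\Mat_{2\times 2}(\Z_\ell)$ induces the standard symplectic form, and that $\ell$-boundedness together with the hermitian discriminant $\ell^n$ correspond precisely to extra-specialness and symplectic type $n$. Once this local calculation is pinned down, the rest of the argument is a formal orbit count that runs in parallel with the proof of Theorem \ref{pav T}.
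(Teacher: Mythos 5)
Your approach is correct and essentially the same as the paper's: localize at $\ell$ via the isomorphism $\cO_\HH\otimes\Z_\ell\cong\Mat_{2\times2}(\Z_\ell)$, translate $\ell$-bounded hermitian $\cO_\HH$-forms with $M\otimes\Z[1/\ell]=\cO_\HH[1/\ell]^g$ into extra-special lattices in a fixed $2g$-dimensional symplectic $\Q_\ell$-space, match discriminants with types, and then run the same orbit argument for $\rU_g$ and $\GU_g$, with the scaled dual $M\mapsto\widehat{M}$ matching the generalized-homothety symmetry realized by $\GU_g(\cO_\HH[1/\ell])$. One correction to the local bookkeeping you yourself flag as the point needing verification: the symplectic pairing on the Morita-reduced lattice $W$ is \emph{not} obtained by applying the reduced trace to the hermitian form (since $(x,y)+(y,x)=\mathrm{trd}(x,y)$, that recipe produces a \emph{symmetric} pairing); rather, as the paper makes explicit, quaternionic conjugation corresponds under the local splitting to the matrix adjugate $\Adj(m)=sm^{T}s^{-1}$, the $\Mat_{2\times2}$-valued hermitian form is $(x,y)=xs^{\oplus g}y^{T}s^{-1}$ for $x,y$ viewed as $2\times2g$ matrices, and the alternating $\Z_\ell$-valued form on a single $V$-row $v,w$ is read off as $vs^{\oplus g}w^{T}$.
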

\begin{proof}
Since $\HH$ is unramified at $\ell$, we can fix an isomorphism
$\HH\otimes\Q_\ell\cong\Mat_{2\times2}(\Q_\ell)$ such that
$\cO_\HH\otimes\Z_\ell$ corresponds to $\Mat_{2\times2}(\Z_\ell)$.
Under this isomorphism conjugating a quaternion corresponds to taking
the adjugate matrix.  For an invertible $2\times2$ matrix $m$ the adjugate
is given by $\Adj(m)=sm^Ts^{-1}$, where $s=\begin{pmatrix} 0 & 1\\-1 &
0\end{pmatrix}$ is the matrix of a symplectic form.  Interpreting
$\HH^g$ as row vectors, we get an isomorphism
$\HH^g\otimes\Q_\ell\cong\Mat_{2\times2g}(\Q_\ell)\cong V\oplus V$
where $V=\Q_\ell^{2g}$ as a row vector space.  Since
$\HH\otimes\Q_\ell\cong\Mat_{2\times2}(\Z_\ell)$ acts on
$\Mat_{2\times2g}(\Q_\ell)$ on the left, any
$\Mat_{2\times2}(\Z_\ell)$-lattice
$W'\subset\Mat_{2\times2g}(\Q_\ell)$ is of the form $W\oplus W$ for
$W\subset V$ a $\Z_\ell$-lattice.  Also the hermitian form $(x,y)$ on
$\Mat_{2\times2g}(\Q_\ell)$ is given by $xs^{\oplus^g}y^Ts^{-1}$,
where $s^{\oplus^g}$ is the block diagonal matrix consisting of $g$
copies of $s$ which gives $V$ the structure of a $2g$-dimensional
symplectic space over $\Q_\ell$ as used in Section \ref{quotient} to
define the building.

Define a map $\upsilon:\cB_g\to\wbrc_{g}(\ell,\cO_{\HH})$ that maps
the vertex corresponding to the extra special
$((1)^r,(0)^{g-r})$-lattice $W\subset V$ to the hermitian form given
by $M=(W\oplus W)\cap\cO_{\HH}[1/\ell]^g$.  Note that $M$ will be
$\ell$-bounded of type $r$ since $W$ was extra special.  Extend
$\upsilon$ to higher faces in the natural manner, namely by mapping
the face corresponding to the chain $W_0\subset
W_1\subset\ldots\subset W_k$ to $(W_0\oplus
W_0)\cap\cO_{\HH}[1/\ell]^g\subset (W_1\oplus
W_1)\cap\cO_{\HH}[1/\ell]^g\subset\ldots\subset (W_k\oplus
W_k)\cap\cO_{\HH}[1/\ell]^g$.  By Remark \ref{sf}\eqref{esl} every
$\ell$-bounded $M$ can be embedded into $\cB_g$ with
$M\otimes\Z[1/\ell]=\cO_{\HH}[1/\ell]^g$ so comes from the extra
special lattice $W$ with $W\oplus W=M\otimes\Z_\ell\subset V\oplus V$.
Hence $\upsilon$ is surjective.

Now suppose we have two modules $M,M'\subset\HH^g$ with
$M\otimes\Z[1/\ell]=M'\otimes\Z[1/\ell]=\cO_{\HH}[1/\ell]^g$.  Any
isomorphism between them extends to an automorphism of
$\cO_{\HH}[1/\ell]^g$, i.e., an element of $\rU_g(\cO_{\HH}[1/\ell])$.
The same logic applies to chains.  Thus the image of two faces under
$\upsilon$ correspond to the same element of the enhanced Brandt
complex if and only if they're related by an element of
$\rU_g(\cO_{\HH}[1/\ell])$.  This proves \eqref{Brandt U}.

Now pick a module $M\subset\HH^g$ with
$M\otimes\Z[1/\ell]=\cO_{\HH}[1/\ell]^g$ and consider an embedding of
the scaled dual $\hat{M}\cong M'\subset\HH^g$ also with
$M'\otimes\Z[1/\ell]=\cO_{\HH}[1/\ell]^g$.  This gives us a scaling
from $M^*$ to $M'$ which extends to an element of
$\GU_g(\cO_{\HH}[1/\ell])$.  Since $M\otimes\Z_\ell$ and
$M^*\otimes\Z_\ell$ give elements of the same generalized homothety,
we see that the preimages of $M$ and $\hat{M}$ under $\upsilon$ are
related by an element of $\GU_g(\cO_{\HH}[1/\ell])$.  Extending the
same logic to higher faces and using that both sides are quotients of
order $2$ gives \eqref{Brandt GU}.
\end{proof}

\begin{corollary}
The enhanced and little Brandt complexes are independent of the choice
of maximal order $\cO_{\HH}$ and depend only on the algebra $\HH$.
\end{corollary}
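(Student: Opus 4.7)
My plan is to combine Theorem~\ref{Brandt T}, which identifies the enhanced and little Brandt complexes with the quotients $\rU_g(\cO_\HH[1/\ell]) \bs \cB_g$ and $\GU_g(\cO_\HH[1/\ell]) \bs \cB_g$ of the fixed building $\cB_g$ for $\Sp_{2g}(\Q_\ell)$, with the observation that changing the maximal order $\cO_\HH$ should merely conjugate the acting subgroups inside $\GSp_{2g}(\Q_\ell)$, which acts on $\cB_g$, and hence give isomorphic quotients.

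The first key step I would carry out is to show that for any two maximal orders $\cO_\HH, \cO'_\HH \subset \HH$ there exists $u \in \HH^\times$ with $u\,\cO'_\HH[1/\ell]\,u^{-1} = \cO_\HH[1/\ell]$. At each prime $p \neq \ell$ the completions $\cO_\HH \otimes \Z_p$ and $\cO'_\HH \otimes \Z_p$ coincide when $p$ is ramified in $\HH$ and are $\GL_2(\Q_p)$-conjugate otherwise (all maximal orders in $\Mat_{2 \times 2}(\Q_p)$ being conjugate); rescaling each local conjugator by $\Q_p^\times$, I can arrange it to have reduced norm $1$. Strong approximation for $\SL_1(\HH)$ with $S = \{\infty, \ell\}$ — valid since $\HH$ is definite and $\ell$ is unramified, so $\SL_1(\HH \otimes \Q_\ell) \cong \SL_2(\Q_\ell)$ is noncompact — then yields a single global $u \in \SL_1(\HH)$ that simultaneously realizes all the local conjugations to the required precision.

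Second, I would translate this conjugation into an element of $\GSp_{2g}(\Q_\ell)$. Under the identification $\HH \otimes \Q_\ell \cong \Mat_{2 \times 2}(\Q_\ell)$ from the proof of Theorem~\ref{Brandt T}, the block-diagonal $2g \times 2g$ matrix $U = I_g \otimes u$ satisfies $U^T s^{\oplus g} U = n(u)\, s^{\oplus g}$, since $u^T s u = \det(u)\, s = n(u)\, s$; hence $U \in \GSp_{2g}(\Q_\ell)$ with symplectic multiplier $n(u)$. Entrywise conjugation $X \mapsto U^{-1} X U$ on $\Mat_{g \times g}(\HH \otimes \Q_\ell)$ is $(X_{ij}) \mapsto (u^{-1} X_{ij} u)$, which by step one carries $\rU_g(\cO_\HH[1/\ell])$ bijectively onto $\rU_g(\cO'_\HH[1/\ell])$ and $\GU_g(\cO_\HH[1/\ell])$ onto $\GU_g(\cO'_\HH[1/\ell])$. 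Since $U$ acts on $\cB_g$ via its $\GSp$-action, it descends to the required isomorphisms of quotients, which under Theorem~\ref{Brandt T} become the desired isomorphisms of Brandt complexes.

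The main obstacle is step one: assembling a single global $u \in \HH^\times$ from adelic conjugation data requires careful bookkeeping with the reduced norm $n \colon \HH^\times \to \Q_{>0}^\times$, since the adelic conjugator must be rescaled place by place before strong approximation can be applied cleanly. Once $u$ is constructed, the remaining verifications — that $U = I_g \otimes u$ lies in $\GSp_{2g}(\Q_\ell)$ and that the induced map on $\cB_g$ descends to the quotients — are routine consequences of the setup of Theorem~\ref{Brandt T}.
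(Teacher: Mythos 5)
Your strategy is the same as the paper's: the paper's one-sentence proof just asserts that $\cO_{\HH}[1/\ell]$ depends only on $\HH$ up to isomorphism and leaves the mechanism implicit, and you are filling in that mechanism (conjugacy of maximal $\Z[1/\ell]$-orders by strong approximation) and the descent through Theorem~\ref{Brandt T}. But the rescaling step you flag as the main obstacle is indeed a genuine gap: rescaling a local conjugator $u_p$ by $\lambda\in\Q_p^\times$ changes $n(u_p)$ only by $\lambda^2$, and replacing $u_p$ by $u_pv_p$ with $v_p\in(\cO'_{\HH,p})^\times\cong\GL_2(\Z_p)$ changes $n(u_p)$ only by a unit. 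So the class of $v_p\bigl(n(u_p)\bigr)\bmod 2$ is an honest invariant --- it is the distance in the local Bruhat--Tits tree between the two maximal orders at $p$ --- and when it is odd you cannot normalize $u_p$ into $\SL_1(\HH_p)$, so strong approximation for $\SL_1(\HH)$ cannot be applied to the tuple $(u_p)$ as you describe.

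The standard fix is to drop the $\SL_1$ reduction entirely and invoke Eichler's theorem: because $\ell$ is split in $\HH$, the Eichler condition holds for $\Z[1/\ell]$, so the class number of one-sided $\cO_{\HH}[1/\ell]$-ideals is $1$ and all maximal $\Z[1/\ell]$-orders of $\HH$ are conjugate by some $u\in\HH^\times$, with no constraint on $n(u)$. Your step two is essentially unaffected: for \emph{any} $u\in\HH^\times$ one has $\overline{u^{-1}xu}=u^{-1}\bar x u$ (since $\bar u=n(u)u^{-1}$ with $n(u)$ central), so conjugation by $U=uI_g$ still commutes with $x\mapsto x^\dagger$ and carries $\rU_g(\cO_{\HH}[1/\ell])$ to $\rU_g(\cO'_{\HH}[1/\ell])$ and $\GU_g$ to $\GU_g$. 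The one point to add is that $U$ now lies in $\GSp_{2g}(\Q_\ell)$ with multiplier $n(u)$, and if $v_\ell(n(u))$ is odd the induced automorphism of $\cB_g$ swaps vertex types $r\leftrightarrow g-r$; one can restore type-preservation (which matters for the enhanced complex) by multiplying $u$ by an element of $\cO'_{\HH}[1/\ell]^\times$ of reduced norm $\ell$, whose existence is again guaranteed by the Eichler condition at $\ell$. With these adjustments your argument closes the gap the paper leaves open.
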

\begin{proof}
This follows from the fact that for $\ell$ unramified in $\HH$, the
algebra $\cO_{\HH}[1/\ell]$ is independent of the maximal order
$\cO_{\HH}$ and depends only on $\HH$.
\end{proof}

\begin{corollary}
If $A_0=E^g$ is a principally polarized superspecial abelian variety of
dimension $g$ with $\cO=\End(E)$, then the enhanced
\textup{(}respectively, little\textup{)} $\ell$-complex of 
$\A_0$ is isomorphic to the enhanced
Brandt complex $\wbrc_{g}(\ell,\cO)$ \textup{(}respectively, the little
 Brandt complex
$\brc_{g}(\ell,\cO)$.\textup{)}.
\end{corollary}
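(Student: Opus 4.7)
The plan is to deduce the corollary by chaining Theorem \ref{pav T} with Theorem \ref{Brandt T}, identifying their right-hand sides via the classical isomorphism $\End(E^g) \cong \Mat_{g\times g}(\cO)$.

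I would begin by recording that for $\mathbf{A}_0 = E^g$ the canonical identification $\End(\mathbf{A}_0) = \Mat_{g\times g}(\cO)$ gives $\End(\mathbf{A}_0)[1/\ell] = \Mat_{g\times g}(\cO[1/\ell])$, and that the Rosati involution on $\End(\mathbf{A}_0)$ attached to the product principal polarization $\lambda_0$ is precisely the conjugate transpose $x \mapsto \bar{x}^t$ used in \eqref{locust1} to define $\rU_g(\cO[1/\ell])$ and $\GU_g(\cO[1/\ell])$. The statement that the Rosati involution is quaternion conjugation on the single factor $\End(E) = \cO$ is classical; the passage to $E^g$ with a product polarization is a direct matrix computation. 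Consequently
\[
\rU(\End(\mathbf{A}_0)[1/\ell]) = \rU_g(\cO[1/\ell]) \qquad\text{and}\qquad \GU(\End(\mathbf{A}_0)[1/\ell]) = \GU_g(\cO[1/\ell]).
\]

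Next I need to upgrade the statement of Theorem \ref{pav T} from the connected component $\wco_\ell^0(\cA_0)$ to the full complex $\wco_\ell(\cA_0)$. By the note in Definition \ref{enh}\eqref{enh1}, it suffices to show that $\cA_0 = (E^g, \lambda_0)$ is oddly $\ell$-isogenous to itself. For this, I would choose $\alpha \in \cO$ of reduced norm $\ell$ (available since $\ell \ne p$ is unramified in $\HH_p$) and verify that the endomorphism $\alpha\cdot\Id_g \in \Mat_{g\times g}(\cO) = \End(\mathbf{A}_0)$ satisfies $(\alpha\Id_g)^{\ast}(\lambda_0) = \ell\lambda_0$, providing the required odd self-isogeny. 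Alternatively, one could argue that the $1$-skeleton of $\wco_\ell(\cA_0)$ restricted to the superspecial vertices contains the enhanced isogeny graph $\wgr_g([\ell],p)$, which is connected by Theorem \ref{ghostly}(1), and invoke Proposition \ref{beaten} to confirm that every vertex of $\wco_\ell(\cA_0)$ is automatically superspecial.

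Combining these observations with Theorems \ref{pav T} and \ref{Brandt T} chains the isomorphisms
\[
\wco_\ell(\cA_0) = \wco_\ell^0(\cA_0) \cong \rU_g(\cO[1/\ell])\bs\cB_g \cong \wbrc_g(\ell, \cO)
\]
and
\[
\co_\ell(\cA_0) \cong \GU_g(\cO[1/\ell])\bs\cB_g \cong \brc_g(\ell, \cO),
\]
yielding the corollary. The only mildly delicate step is the Rosati-involution identification: had we chosen a different principal polarization on $E^g$ given by a hermitian matrix $H$, the Rosati involution would be $x \mapsto H^{-1}\bar{x}^t H$, which is $\GL_g(\cO)$-conjugate to the standard one, so the unitary groups and hence the quotients of $\cB_g$ agree up to canonical isomorphism. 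Working with the standard product polarization from the outset removes this ambiguity and everything else is bookkeeping once Theorems \ref{pav T} and \ref{Brandt T} are in hand.
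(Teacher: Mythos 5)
Your proof takes the same route as the paper's, whose entire argument is the single line ``Combine Theorem \ref{Brandt T} with Theorem \ref{pav T}.''\ You sensibly fill in two steps the paper leaves implicit: the identification of $\End(E^g)[1/\ell]$ with $\Mat_{g\times g}(\cO[1/\ell])$ equipped with the conjugate-transpose Rosati involution, and the passage from $\wco_\ell^0(\cA_0)$ (which is what Theorem \ref{pav T}\eqref{U pav} literally gives) to the full complex $\wco_\ell(\cA_0)$.

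One small caveat on the connectedness step: the existence of $\alpha\in\cO$ of reduced norm $\ell$ is not automatic from $\ell$ being unramified in $\HH_p$ alone --- it depends on whether $\ell$ is represented by the norm form of the particular maximal order, which can fail when $\cO$ has class number bigger than one. Your alternative argument, via the connectedness of $\wgr_g([\ell],p)$ (Theorem \ref{ghostly}) combined with Proposition \ref{beaten} to see that every vertex of $\wco_\ell(\cA_0)$ is superspecial, is the cleaner and unconditional one, and is what should actually carry the connectedness claim.
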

\begin{proof}
Combine Theorem \ref{Brandt T} with Theorem \ref{pav T}.
\end{proof}

\subsection{Computations of Brandt complexes}
\label{rounder}

 Let $N$ be a square-free positive integer with an odd number of prime
factors.  Let $\cO_\HH$ be a maximal order in the definite rational
quaternion algebra $\HH=\HH(N)$ of reduced discriminant $\Disc(\HH)=N$.  We computed
the Brandt complexes 
$\wbrc_{g}(\ell,\cO_\HH)$  and $\brc_{g}(\ell,\cO_{\HH})$
for the values of $N$ with $(N,\ell)=1$
in Table \ref{oak}.
\begin{center}
\begin{table}[h]
\begin{tabular}{c|c|c}
$g$ & $\ell$ & $N=\Disc(\HH)$\\
\hline
$2$ & $2$ & $N\le347$\\
  & $3$ & $N\le227$\\
  & $5$ & $N\le157$\\
  & $7$ & $N\le107$\\
\hline
$3$ & $2$ & $N\le23$\\
  & $3$ & $N\le13$
\end{tabular}\\[.25in]
\caption{The range of our computations 
of $\wbrc_{g}(\ell,\cO_\HH)$  and $\brc_{g}(\ell,\cO_{\HH})$}
\label{oak}
\end{table}
\end{center}

\section{\texorpdfstring{$\wco_{2}(2,7)$}{tilde co\unichar{"5F}2(2,7)} and \texorpdfstring{$\co_{2}(2,7)$}{co\unichar{"5F}2(2,7)}}
\label{dishes}

In this section we examine the isogeny complexes
$\wco_{2}(2,7)$ and $\co_{2}(2,7)$ in detail, giving their
faces and half-faces in each dimension.  In \cite{jz2} we build on
this, using these examples to illustrate how we compute the cohomology 
of isogeny complexes.  We  give the standard cochain complexes with their differentials
for $\wco_2(2,7)$ and $\co_2(2,7)$ and then compute the cohomology
$H^\ast(\wco_{2}(2,7),\Z)$ and $H^\ast(\co_{2}(2.7),\Z)$.

\subsection{\texorpdfstring{\except{toc}{\boldmath{$\wco_2(2,7)=\wbrc_2(2,\OO_{\HH(7)})$}}\for{toc}{$\wco_2(2,7)=\wbrc_2(2,\OO_{\HH(7)})$}}{tilde co\unichar{"5F}2(2,7)=tilde brc\unichar{"5F}2(2,O\unichar{"210D}(7))}}
\label{radishes}

The cell complex $\wco_2(2,7)=\wbrc_2(2,\OO_{\HH(7)})$ has $8$ $0$-cells, 
$23$ $1$-cells, and $16$ $2$-cells.  They are divided
among the possible types as shown in Table \ref{snap}.

\begin{center}
\begin{table}[h]
\begin{tabular}{c||c|c|c|c|c|c|c}
 \text{Type} & $(0)$ & $(1)$ & 
 $(2)$ & $(1,0)$ & 
$(2,0)$ & $(2,1)$ & $(2,1,0)$\\
\hline
\text{Number of cells} &  $2$ & $4$ & $2$ & $7$ &  $9 $ & $7$ & $16$
\end{tabular}\\[.25in]
\caption{Number of cells in $\wco_2(2,7)$ by type}
\label{snap}
\end{table}
\end{center}

The weights and masses of the cells of the various types are given
in Table \ref{snap2}.
\begin{center}
\begin{table}[h]
\begin{tabular}{c|c|c}
 \text{Type $t$} & \text{Weights} & \text{Mass $m_2(2,7)_t$}\\
\hline\hline
$(0)$ & $32,48$& $1/32+1/48=5/96$\\
$(1)$ & $16,16,8,96$ & $25/96$\\
$(2)$ & $32,48$&$ 5/96$\\
$(1,0)$ & $8, 8, 32, 16, 8, 8, 12, 12, 48$ & $25/32$\\
$(2,0)$ & $8, 16, 8, 8, 32, 4, 16$ & $25/32$\\
$(2,1)$ & $8, 16, 8, 16, 8, 4, 32$ & $25/32$\\
$(2,1,0)$ & $ 4, 8, 8, 16, 8, 8, 8, 8, 4, 32, 16, 4, 4, 4, 8, 16$ & $75/32$
\end{tabular}\\[.25in]
\caption{Weights of the cells in $\wco_2(2,7)$ by type}
\label{snap2}
\end{table}
\end{center}
From Theorem \ref{mass1} we have
\[
m_2(2,7)_{(0)}=\frac{(-1)^{3}}{4}\left\{\zeta(-1)\zeta(-3)\right\}\cdot
(7-1)(7^2+1)
= \frac{-1}{4}\left\{\frac{-1}{12}\cdot\frac{1}{120}\right\}\cdot(300)
= \frac{5}{96}\, ,
\]
agreeing with the computation of $m_2(2,7)_{(0)}$ in Table \ref{snap2}.
The other $m_2(2,7)_t$ can now be computed from Theorem
\ref{gutter} using Table \ref{dodge2} for the $N(2)_{r,s}$ -- reassuringly
they all agree with Table \ref{snap2}.

\subsection{\texorpdfstring{\except{toc}{\boldmath{$\co_2(2,7)=\brc_2(2,\OO_{\HH(7)})$}}\for{toc}{$\co_2(2,7)=\brc_2(2,\OO_{\HH(7)})$}}{co\unichar{"5F}2(2,7)=brc\unichar{"5F}2(2,O\unichar{"210D}(7))}}

The cell complex $\co_2(2,7)=\brc_2(2,\OO_{\HH(7)})$ has $6$ $0$-cells, 
$14$ $1$-cells, and $12$ $2$-cells.  They are divided
among the possible types as in Table \ref{snap3} .

\begin{center}
\begin{table}[h]
\begin{tabular}{c||r|c|r|c|c|c|c}
 \text{Type} & $\overline{(0)}$  & $\overline{(1)}$ & 
 $\overline{(1,0)}$ & $\overline{(2,0)}$: & 
$\overline{(2,0)}$: & $\overline{(2,1,0)}$: & $\overline{(2,1,0)}$:\\
& $=\overline{(2)}$  & & $=\overline{(2,1)}$ & regular edges & half-edges 
& regular facets & half-facets\\
\hline
\text{\# cells} &  $2\,\,\,\,\,$ & $4$ & $7$\,\,\,\,\,\,\,\,   & $2$ &  $5 $ & $4$ & $8$
\end{tabular}\\[.25in]
\caption{Number of cells in $\co_2(2,7)$ by type}
\label{snap3}
\end{table}
\end{center}

\bibliographystyle{plain}
\bibliography{ICg5}

\end{document}